\numberwithin{equation}{section}
\newtheorem{theorem}{Theorem}[section]
\newtheorem{lemma}[theorem]{Lemma}
\newtheorem{proposition}[theorem]{Proposition}
\newtheorem{remark}[theorem]{Remark}
\newtheorem{corollary}[theorem]{Corollary}
\newtheorem{definition}[theorem]{Definition}
\newtheorem{conjecture}[theorem]{Conjecture}
\def\R{\mathcal{R}}
\newcommand{\C}{{\mathbb C}}
\newcommand{\CP}{{\mathbb CP}}
\newcommand{\Z}{{\mathbb Z}}
\begin{document}
\title[]{Algebraically integrable bodies and related properties of the Radon transform}
\author[]{Mark Agranovsky}

\address[Mark Agranovsky]{Department of Mathematics, Bar-Ilan University, Ramat Gan, 5290002, Israel}

\email{agranovs@math.biu.ac.il}

\author[]{Jan Boman}

\address[Jan Boman]{Department of Mathematics, Stockholm University, 10691 Stockholm, Sweden}

\email{jabo@math.su.se}

\author[]{Alexander Koldobsky}

\address[Alexander Koldobsky]{Department of Mathematics,
University of Missouri-Columbia,
Columbia, MO 65211, USA}

\email{koldobskiya@missouri.edu}

\author[]{Victor Vassiliev}

\address[Victor Vassiliev]{Steklov Mathematical Institute, 119991, Moscow, Russia}

\email{vavassiliev@gmail.com}

\author[]{Vladyslav Yaskin}

\address[Vladyslav Yaskin]{Department of Mathematical and Statistical Sciences, University of Alberta, Edmonton, AB T6G2G1, Canada}

\email{yaskin@ualberta.ca}

\thanks{The {third} named author was supported in part by the U.S. National Science Foundation Grant DMS-2054068. The fifth author was supported in part by NSERC. This material is partially based on the work supported by the U.S. National Science Foundation grant DMS-1929284 while the third and fifth authors were in residence at the Institute for Computational and Experimental Research in Mathematics in Providence, RI, during the Harmonic Analysis and Convexity semester program.}
\keywords{}
\subjclass[2010]{}

\begin{abstract}
Generalizing Lemma 28 from Newton's ``Principia" \cite{Newton}, Arnold \cite{Ar} asked for a complete characterization of algebraically integrable domains.
In this paper we describe the current state of Arnold's problems. We also consider closely related problems about the Radon transform of indicator functions.

\end{abstract}

\maketitle

\section{Introduction}

The questions considered in this survey belong to the area of geometric tomography (see the book \cite{RG}) which lies at the crossroads between convex geometry and integral geometry and can be defined as the study of geometric properties of solids based on data about their sections and projections.

We study algebraic properties of two important {volumetric characteristics} in geometric tomography.
For a body (compact set with non-empty interior) $K$ in $\mathbb R^n,$ $\xi\in S^{n-1}$ and $t\in \mathbb R,$
the {\bf cutoff functions} of $K$ represent the $n$-dimensional volume of the parts of $K$ cut by the
hyperplane perpendicular to $\xi$ at distance $t$ from the origin:

\begin{equation}\label{E:VK}
\begin{aligned}
V^{+}_K(\xi,t)& ={\mathrm {Vol}}_{n}(K\cap \{x\in \mathbb R^n: \ \langle x,\xi\rangle \le t\}) = \int_{K\cap \{x\in \mathbb R^n:\ \langle x, \xi \rangle  \le t\}} dx, \\
V^{-}_K(\xi,t)& ={\mathrm {Vol}}_{n}(K\cap \{x\in \mathbb R^n: \ \langle x,\xi\rangle \ge t\}) = \int_{K\cap \{x\in \mathbb R^n:\ \langle x, \xi \rangle  \ge t\}} dx,
\end{aligned}
\end{equation}
The {\bf section function} of $K$ is the $(n-1)$-dimensional volume of the section of $K$ by the same hyperplane:
\begin{equation}\label{E:AK}
\begin{aligned}
A_K(\xi,t)={\mathrm {Vol}}_{n-1}(K\cap \{x\in \mathbb R^n:\ \langle x,\xi\rangle=t\})&=\R(\chi_K)(\xi,t) \\
&=\int_{K\cap \{x\in \mathbb R^n:\  \langle x, \xi\rangle  =t\} } dx.
\end{aligned}
\end{equation}
Here $\R$ stands for the Radon transform, $\chi_K$ is the indicator (characteristic function) of $K,$ $\langle x, \xi \rangle$ is the inner (scalar) product in $\mathbb R^n$, and $dx$ is Lebesgue measure on $\mathbb R^n$ or $\{x:\ \langle x,\xi \rangle =t\},$ correspondingly. Clearly, the cutoff functions and the section function are related via differentiation in $t$.

Most of our problems take root in Lemma {28} about ovals from Newton's Principia \cite{Newton}; see also the discussion in \cite{ArVas, Wi}.
Newton proved that if $K$ is a convex infinitely smooth domain in $\mathbb R^2,$ then the cutoff function of $K$ cannot appear as the solution of a polynomial equation involving the parameters of the cutting hyperplane.
Formalizing  the question and extending it to higher dimensions,
Arnold \cite{Ar} {asked whether there exist domains with smooth boundaries in $\mathbb R^n$ (apart from ellipsoids for odd $n$) for which the  cutoff functions $V_K^{\pm}$ are branches of an algebraic function.} Recall that a function $f(\xi,t)$ is {\bf algebraic} if there exists a {non-zero} polynomial $\Phi(\xi,t,w)$ of $n+2$ variables such that $$\Phi(\xi,t, f(\xi,t)) \equiv 0.$$

\begin{definition} \label{D:alg-int} (cf. \cite{Ar,ArVas,Vas}) {A domain $K$ is {\bf algebraically integrable} if the two-valued cutoff function $V^{\pm}_K(\xi,t)$ coincides with some branches of an algebraic function.}
\end{definition}
In Section \ref{S:S1}, we present the current state of Arnold's problems. In particular, it was proved in \cite{Vas} that there are no {algebraically integrable bodies with infinitely smooth boundaries} in even dimensions. However, the odd-dimensional case is still open.
\smallbreak

{In Sections 3-7, we consider similar questions that are motivated by Arnold's problem {and address the single-valued section function $A_K(\xi,t)$ rather than the multi-valued cutoff function $V_K(\xi,t).$  Therefore, we study geometric properties of bodies $K$ from the point of view of algebraic properties of their Radon transform $A_K$}. The following definition is similar to Definition \ref{D:alg-int}:}

\begin{definition} \label{D:algRT} Let $K$ be a body in $\mathbb R^n.$  We say that $K$ has {\bf algebraic Radon transform} if
there exists a function $\Psi \in C(S^{n-1})[t,w]$ which is an element of the polynomial ring of two variables over the algebra $C(S^{n-1})$ (i.e.
it is a polynomial with respect to $t,w$ with coefficients which are continuous functions of $\xi)$ and satisfies the equation
$$\Psi(\xi,t, A_K(\xi,t)) =0$$
for every $t$ such that the hyperplane $\langle \xi, x \rangle =t$  intersects $K.$
\end{definition}

 The essential difference between the two definitions is that in Definition \ref{D:algRT} we do not assume that $\Psi$ is a polynomial in $\xi,$ as we  do in Definition \ref{D:alg-int}, so the section function $A_K(\xi,t)$ is algebraic only with respect to the variable $t.$

Note that if $K$ is algebraically integrable  (i.e., the cutoff function $V_K^{\pm}$ is algebraic),
then the section function $A_K(\xi,t)$ is also algebraic as the derivative
$$A_K(\xi,t)= {\pm} \frac{d}{dt}V_K^{\pm}(\xi,t)$$
of an algebraic function. Thus, the class of domains with algebraic Radon transform is larger than that of algebraically integrable domains.

Our basic example is the unit ball $B^n$ in $\mathbb R^n.$ In this case
$$A_{B^n}(\xi,t) =\frac{\pi^{\frac{n-1}2}}{\Gamma(\frac {n+1}2)} (1-t^2)^{\frac{n-1}{2}}.$$
If $n$ is odd then $A_{B^n}(\xi,t)$ is a polynomial in $t.$  Applying an affine transformation to $B^n$ we obtain that $A_K(\xi,t)$ is a polynomial in $t$ if
$n$ is odd and $K$ is an ellipsoid.

In this article, we consider classes of bodies $K$ satisfying Definition \ref{D:algRT} with the defining polynomial $\Psi$ of a certain form.
The property of ellipsoids in odd-dimensional spaces mentioned above gives rise to the following:

\begin{definition} [\cite{Ag}] \label{D:poly-int} Let $K$ be a domain in $\mathbb R^n.$ We call $K$ {\bf polynomially integrable} if the Radon transform $A_K(\xi,t)$ of $\chi_K$ is a polynomial with respect to  $t$ when the corresponding hyperplane intersects $K.$
\end{definition}

In the case of polynomially integrable domains, the equation $\Psi(\xi,t,w)=0$ in Definition \ref{D:algRT} has the form $\Psi(\xi,t,w)=w- \sum_{k=0}^N a_k(\xi)t^k=0.$
Another example is given by {\bf rationally integrable domains} where $A_K(\xi,t)$ is the ratio of polynomials in $t$: $A_K(\xi,t) = \frac{P(\xi,t)}{Q(\xi,t)}$ and, correspondingly,
$\Psi(\xi,t,w)= Q(\xi,t)w-P(\xi,t).$
\smallbreak

A polynomially integrable body is  algebraically integrable if we additionally demand that $A_K(\xi,t)$ extends from  the unit sphere $|\xi|=1$  to $\mathbb R^n$ as a polynomial, when $t$ is fixed. However, in Definition \ref{D:poly-int} no essential condition is imposed on the behavior of $A_K(\xi,t)$ with respect to $\xi$ hence the two classes are different, although they intersect.
\smallbreak

In Section \ref{S:S3}, we describe the result of \cite{Ag, KMY} that the only  polynomially integrable bodies are ellipsoids in odd dimensions. In Section \ref{S:S4}, we extend this result
to the case where the section function is real analytic, in particular, it can be a rational function without real poles. In Section \ref{S:S5}, a relation between polynomial integrability and finite stationary phase expansions of certain Fourier integrals is established. This relation is used to characterize  locally polynomially integrable hypersurfaces.  In Section \ref{S:S6}, domains with algebraic $X$-ray transform (length chord functions) are studied. Finally, in Section \ref{S:S7} we present Theorem \ref{jbthm} showing that the Radon transform of a compactly supported distribution can be supported in the set of tangent planes to the boundary $\partial D$ of a bounded convex domain $D \subset \mathbb R^n$ only if $\partial D$ is an ellipsoid. This result gives a new proof of the fact that polynomially integrable bodies must be ellipsoids (Theorem 3.2).

\section{Algebraically integrable bodies in Euclidean space} \label{S:S1}

\subsection{Problems and main results} \label{S:Vas}

By a theorem of Archimedes (see \cite{arch}, \cite{bo21}), spheres in $\mathbb R^3$ are algebraically integrable. Indeed, the volume cut from the unit ball
in ${\mathbb R}^{3}$ by a hyperplane at distance
$t<1$ from the origin is a polynomial in $t$. It is easy to check that the same is true for arbitrary ellipsoids in odd-dimensional spaces. On the contrary, Newton's result \cite[Lemma 28]{Newton} mentioned in the Introduction asserts that there are no convex algebraically integrable bodies with smooth boundaries  in $\mathbb R^2$.

V.~Arnold \cite[Problems 1987-14, 1988-13, 1990-27]{Ar} conjectured that there are no algebraically integrable bodies with smooth boundaries in  even-dimen\-si\-onal spaces and asked whether there exist such  bodies other than ellipsoids in odd-dimensional spaces. The even-dimensional conjecture was confirmed in \cite{Vas}.

\begin{theorem}[\cite{Vas}]
\label{even}
There are no algebraically integrable bodies with $C^\infty$-smooth boundaries in even-dimensional spaces.
\end{theorem}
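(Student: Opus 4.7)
My approach is to complexify $V^+_K(\xi,t)$ and derive a contradiction from the monodromy of its analytic continuation around hyperplanes tangent to $\partial K$, generalizing the strategy that Newton used for ovals in the plane. Assume, for contradiction, that a $C^\infty$-smooth body $K$ in even-dimensional $\mathbb{R}^n$ is algebraically integrable. Since $V^+_K$ vanishes identically on the open set of hyperplanes missing $K$, the algebraicity hypothesis lets me analytically continue it from this open set to a finitely multi-valued holomorphic function on an open subset of $\mathbb{C}^n\times\mathbb{C}$, whose branch locus is a complex algebraic hypersurface. This branch locus must contain the complexified dual variety of $\partial K$, since each real tangent hyperplane lies on the boundary of the zero set of $V^+_K$.

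The decisive step is a local monodromy computation near a generic tangent hyperplane. I would pick $x_0\in\partial K$ of positive Gaussian curvature and write $\partial K$ locally as $x_n=\tfrac12\sum_{i<n}\lambda_i x_i^2+O(|x|^3)$. For hyperplanes just past the tangent plane at $x_0$, the leading behavior of the cutoff volume is $c\,t^{(n+1)/2}$, already multi-valued when $n$ is even; more invariantly, after embedding $V^+_K$ in a holomorphic family of integrals over relative cycles in the complement of the complex osculating quadric, the vanishing cycle is a real $(n-1)$-sphere shrinking to $x_0$. For even $n$ this cycle has odd dimension and zero self-intersection, so the Picard-Lefschetz transvection along it is a unipotent monodromy operator of \emph{infinite} order; iterating it around a single tangent hyperplane produces infinitely many distinct branches of $V^+_K$, contradicting the finiteness of branches guaranteed by algebraicity. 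For odd $n$, by contrast, the vanishing cycle is even-dimensional with nonzero self-intersection, the Picard-Lefschetz monodromy is a reflection of order two, and no contradiction arises --- consistent with odd-dimensional ellipsoids being algebraically integrable.

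The principal obstacle is the analytic-topological setup needed to apply Picard-Lefschetz to the cutoff. Because the indicator $\chi_K$ is not an analytic density, $V^+_K$ is not literally the period of an algebraic form, and the Picard-Lefschetz machinery cannot be invoked verbatim. The technical heart of the proof is to realize the analytic continuation of $V^+_K$ as a section of a local system of holomorphic integrals over a suitable family of relative cycles in the complement of the complexified $\partial K$ (or, in the local model, of the osculating quadric), and to identify the vanishing cycle of this family with the expected real $(n-1)$-sphere. Once this framework is in place, the parity dichotomy — invisible on the real side but sharp on the complex side through the parity of the dimension of the vanishing cycle — yields the infinite-order monodromy needed to complete the contradiction by pure linear algebra.
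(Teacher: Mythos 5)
Your plan collapses at the decisive step. The local monodromy around a \emph{single} generic tangent hyperplane, for $n$ even, is an involution, not an infinite-order operator, so it cannot by itself contradict algebraicity. You can see this directly from your own exponent computation: near a tangent plane at a positive-curvature point, $V^+_K(\xi,t)=V_0+c\,(t-t_0)^{(n+1)/2}\bigl(1+O(t-t_0)\bigr)$, and a loop around $t_0$ in the complex $t$-plane multiplies the fractional-power factor by $e^{(n+1)\pi i}=-1$ for $n$ even; the continuation is $2V_0 - V^+_K(\xi,t)$, and running the loop twice returns to $V^+_K$. A two-valued germ of the form $\sqrt{t-t_0}\times(\text{analytic})$ is of course perfectly algebraic, so no contradiction can arise from one branch point alone. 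This is consistent with Lemma 2.8 of the paper, where the pinch loop $\alpha$ sends $[K]_-$ to $-[K]_-$, an order-two reflection.

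The source of the error is a miscount of the dimension of the relevant vanishing cycle. The monodromy acts on the relative homology $H_n\bigl(\C^n, A\cup L^{-1}(t)\bigr)$, and the bilinear form is induced (via the chain of maps in equation \eqref{mv}) from the intersection form on $H_{n-2}\bigl(A\cap L^{-1}(t)\bigr)$, where $A\cap L^{-1}(t)$ is a complex variety of \emph{complex} dimension $n-2$. The vanishing cycle has real dimension $n-2$, not $n-1$. For $n$ even this is \emph{even}, so the form is symmetric, $\langle\delta,\delta\rangle=\pm 2$, and the Picard--Lefschetz operator is a genuine reflection. It is in \emph{odd} dimensions that $n-2$ is odd, the form is antisymmetric, and the operator is a symplectic transvection; this is why odd dimensions are the permissive case (ellipsoids). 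You have the parity dichotomy exactly reversed, and the reversed version would, if true, ``prove'' that even dimensions are the easy case.

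To obtain a contradiction one must combine the local monodromies globally, and this is where the real work in the paper lies. For convex bodies, composing the two pinch loops $\alpha,\beta$ around the minimum and maximum tangent planes gives $(\alpha\beta)^p\colon v\mapsto v+2pV$ (Corollary 2.9), an explicit infinite orbit. For general bodies there is no such elementary composition, and the proof instead shows that algebraic integrability would force the reflection group generated by all the Picard--Lefschetz reflections to be finite, hence a Weyl group; but the sum of all vanishing-cycle germs evaluates to the constant function equal to the total volume, a nonzero lattice element invariant under every reflection, and no Weyl group admits such an element. Your proposal has no analogue of either of these global steps, so even after fixing the parity the argument would not close.

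Finally, your concern that ``the Picard--Lefschetz machinery cannot be invoked verbatim'' because $\chi_K$ is not an analytic density is resolved in the paper by working with the holomorphic volume form $dx_1\wedge\cdots\wedge dx_n$ and the relative cycles in $H_n(\C^n, A\cup X;\C)$, over which the volume integral is a genuine period. That part of the setup is standard; the real content is the global reflection-group argument that your proposal omits.
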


The odd-dimensional case is still open; see statements \ref{mthm2}--\ref{remsec} below for some partial results towards it. \smallskip

\noindent
{\bf Remarks.}  1. By projective duality and the Tarski--Seidenberg theorem, if a body in $\mathbb R^n$ is algebraically integrable, then its boundary is semi-algebraic. Therefore, it is enough to consider the case when our body is bounded by a smooth component of a hypersurface defined by a polynomial equation $F(x) =0$.

2. The condition of infinite smoothness is essential in this problem: for an arbitrary natural $N$ there exist algebraically integrable bodies with $C^N$-smooth boundaries in even-dimensional spaces.

3. In fact, we prove even more: under the conditions of Theorem \ref{even} the analytic con\-ti\-nu\-ation of the volume function to the space of complex hyperplanes in $\C^n$ cannot be even algebroid, because it necessarily takes infinitely many different values at the same hyperplanes.
\medskip

\begin{conjecture}
\label{mconj}
For any odd number $k$, even number $m$ and $\varepsilon \in (0,1)$, the body in $\mathbb R^{k+m}$ bounded by the hypersurface
\begin{equation}
\label{mex}
\left(\sqrt{x_1^2 + \dots + x_k^2} -1 \right)^2 + y_1^2 + \dots + y_m^2 = \varepsilon^2
\end{equation}
$($i.e. the $\varepsilon$-neighborhood of the unit sphere $S^{k-1} \subset \mathbb R^k \subset \mathbb R^{k+m})$
is  algebraically integrable.
\end{conjecture}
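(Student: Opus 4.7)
The plan is to compute $V_K^+(\xi,t)$ in closed form by a judicious choice of integration order, exploiting both the $O(k)\times O(m)$ symmetry of $K$ and the two parity hypotheses on $k,m$. By that symmetry $V_K^\pm$ depends on $\xi$ only through $a:=|\xi_1|$ and $b:=|\xi_2|$ where $\xi=(\xi_1,\xi_2)\in\mathbb{R}^k\times\mathbb{R}^m$; after rotating we may take $\xi=(ae_1,be_1)$ so that the cutting half-space reads $\{ax_1+by_1\le t\}$.

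First I would integrate over $x\in\mathbb{R}^k$ with $y\in\mathbb{R}^m$ fixed. For $|y|\le\varepsilon$ the body constraint reduces to the spherical shell $1-\mu(y)\le|x|\le 1+\mu(y)$, with $\mu(y):=\sqrt{\varepsilon^2-|y|^2}$, cut by $\{x_1\le s'\}$, $s':=(t-by_1)/a$. The inner integral is the shell-cap volume
\[
Q(s',\mu)\;=\;V^k_{\mathrm{cap}}(s',1+\mu)-V^k_{\mathrm{cap}}(s',1-\mu).
\]
Because $k$ is odd, $V^k_{\mathrm{cap}}(s,r)$ is a polynomial in $(s,r)$ (the classical fact underlying algebraic integrability of ellipsoids in odd dimensions), so $Q$ is a polynomial in $(s',\mu)$; the evident antisymmetry $Q(s',-\mu)=-Q(s',\mu)$ forces the expansion $Q(s',\mu)=\sum_j q_j(s')\mu^{2j+1}$ with polynomials $q_j$ of $s'$-degree at most $k-2-2j$.

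Next I would carry out the $y$-integration,
\[
V^+(\xi,t)\;=\;\sum_j\int_{|y|\le\varepsilon}q_j\!\left(\tfrac{t-by_1}{a}\right)(\varepsilon^2-|y|^2)^{j+1/2}\,dy.
\]
Expand $q_j((t-by_1)/a)$ in powers of $y_1$; odd powers vanish by $y_1\to -y_1$ symmetry. For the surviving even powers, integrating first over $y''=(y_2,\ldots,y_m)\in\mathbb{R}^{m-1}$ produces the factor $(\varepsilon^2-y_1^2)^{j+m/2}$ — and here it is crucial that $m$ is even, because then $j+m/2$ is an integer and the factor is a polynomial in $y_1$. The subsequent integration over $y_1\in[-\varepsilon,\varepsilon]$ just produces numerical constants in $\varepsilon$, giving the closed form
\[
V^+(a,b,t)\;=\;\sum_{l,r'\ge 0}E_{l,r',\varepsilon}\,a^{-l}\,b^{2r'}\,t^{\,l-2r'},
\]
polynomial in $t$ of degree at most $k-2$. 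Clearing denominators, $a^{k-2}V^+$ is a polynomial in $(a,b,t)$ containing only even powers of $b$. Splitting by parity in $a$ as $a^{k-2}V^+=P_{\mathrm{ev}}(a^2,b^2,t)+aP_{\mathrm{od}}(a^2,b^2,t)$ and eliminating $a$ via $(a^{k-2}V^+-P_{\mathrm{ev}})^2=a^2P_{\mathrm{od}}^2$ produces a polynomial identity $\Phi(|\xi_1|^2,|\xi_2|^2,t,V^+)=0$ whose coefficients are polynomial in $(\xi,t)$. Its two branches, coming from the sign of $\sqrt{|\xi_1|^2}$, are exactly the cutoff values $V^+$ and $V^-=\mathrm{Vol}(K)-V^+$.

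The step I expect to require the most care is controlling where the polynomial formula for the shell-cap $Q(s',\mu)$ really represents the geometric volume: it does so only in the central range $|s'|\le 1-\mu$, so for generic $(\xi,t)$ the integrand above is actually the analytic continuation of $Q$ at slices $y$ where the hyperplane fails to cut the inner sphere properly. One must identify a non-empty open set of $(\xi,t)$ on which a central configuration holds for \emph{all} $y$ in the integration domain, establish the polynomial identity $\Phi(\xi,t,V^+)=0$ there, and then propagate this identity by analytic continuation to the full domain of analyticity of $V^\pm$; the latter amounts to showing that the algebraic function cut out by $\Phi=0$ has the cutoff function among its branches and is not destroyed by crossings of the discriminant strata where the cut geometry degenerates.
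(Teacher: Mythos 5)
This statement is explicitly labeled a \emph{conjecture} in the paper and remains open; the paper proves only the weaker Theorem~\ref{mthm2} (algebroidal integrability) and Proposition~\ref{mprop} (one component is a lacuna, for the algebraic model $(|x|^2-1)^2 + |y|^2 = \varepsilon^2$), and states that to finish the conjecture one must still ``check that this analytic continuation has only power growth at its singular points.'' So you are claiming to prove something the authors leave open, and the burden on your final step is very heavy.

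The explicit computation in the first part of your argument is in the right spirit and, properly checked, would show that $V^+$ agrees with a branch of a polynomial equation on the particular connected component $\Omega_0$ of generic hyperplanes containing $\{x_1=0\}$ — this is essentially what Proposition~\ref{mprop} asserts (a lacuna). The gap is in the final paragraph. The set of generic hyperplanes cutting $K$ has \emph{several} components (the paper lists four, with cross-section types $\emptyset$, $S^{k+m-2}$, $S^{k-1}\times S^{m-1}$, $S^{k-2}\times S^m$), and the cutoff function is a different real-analytic function on each. Your polynomial identity is established only on the one component where the ``central configuration'' $|s'(y)|\le 1-\mu(y)$ can be arranged for every $y$. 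Your plan to ``propagate by analytic continuation to the full domain'' does not work: $V^+$ is not real-analytic across the discriminant $\Sigma$ separating components, so unique continuation in the real domain stops at $\partial\Omega_0$; and the \emph{geometric} $V^+$ on an adjacent component is not obtained from $V^+|_{\Omega_0}$ by complex analytic continuation — it differs by integrals of the volume form over vanishing cycles introduced when the slice topology changes. Concretely, once $(\xi,t)$ leaves $\Omega_0$, some $x$-slices have $|s'(y)|>1-\mu(y)$, and there the geometric shell-cap volume equals the outer-cap volume minus $0$ or minus the full inner ball, not the polynomial $V_{\mathrm{cap}}^k(s',1+\mu)-V_{\mathrm{cap}}^k(s',1-\mu)$; the resulting correction integral (over the region of $y$ where the formula is wrong) is exactly the quantity whose algebraicity is unknown. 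Nothing in your argument controls it. This is precisely why the problem is hard: Theorem~\ref{mthm2} already gives finite-valuedness of all branches of the analytic continuation, and the missing ingredient — showing those other branches (equivalently, the correction terms over the other components) have only power growth — is what the paper identifies as the outstanding difficulty. As written, your argument would at best re-derive a round-tube analogue of Proposition~\ref{mprop}, not the conjecture.

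A smaller but real issue: clearing denominators via $a^{k-2}V^+$ introduces an odd power of $a=|\xi_1|$ (since $k$ is odd), so $a^{k-2}V^+$ is not a function of $(\xi,t)$ alone. One can repair this — write $V^+$ as a Laurent series in $a$ with coefficients in $\mathbb R[a^2,b^2,t]$, separate even and odd powers of $a$, and square to eliminate $a$ — but then one must check that the resulting $\Phi$ is not the zero polynomial and that its degree in $w$ really captures both $V^+$ and $V^- = \mathrm{Vol}(K)-V^+$. You should make that elimination explicit and verify nondegeneracy; at present the step is asserted rather than carried out.
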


It is encouraging that the obstruction to algebraic integrability mentioned in the third remark (the infinite ramification of the analytic continuation of volume function) fails for this body.

\begin{theorem}[\cite{ne}]
\label{mthm2}
The body introduced in Conjecture \ref{mconj} is algebroidally integrable (i.e. its cutoff functions are algebroid). In particular, the analytic continuation of this function from any domain of the space of real hyperplanes where this function is regular is finite-valued.
\end{theorem}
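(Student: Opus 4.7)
The plan is to exploit the very large symmetry group of $K_\varepsilon$ to reduce the cutoff function to two variables, write it as an explicit period integral, and then argue that the monodromy of its analytic continuation is finite. Since $K_\varepsilon$ is invariant under the action of $O(k)\times O(m)$ on the two factors of $\mathbb R^{k+m}=\mathbb R^k\times\mathbb R^m$, the cutoff $V^{+}_K(\xi,t)$ depends on $\xi$ only through the angle $\alpha$ between $\xi$ and $\mathbb R^k$. Normalizing $\xi=(\cos\alpha\,e_1,\sin\alpha\,e_1')$ and writing $s=\cos\alpha$, $c=\sin\alpha$, it suffices to prove that the two-variable function $V^{+}(\alpha,t)$ is algebroid.

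Passing to polar coordinates $x=r\omega$ on $\mathbb R^k$ with $\omega_1=\omega\cdot e_1$, and splitting $y=(y_1,y')\in\mathbb R\times\mathbb R^{m-1}$, one integrates out the angular directions of $\omega$ orthogonal to $e_1$ and the angular directions of $y'$, to arrive at
\[
V^{+}(\alpha,t)=C\iiint_{\Omega(\alpha,t)} r^{k-1}(1-\omega_1^2)^{(k-3)/2}\bigl(\varepsilon^2-(r-1)^2-y_1^2\bigr)^{(m-1)/2}\,dr\,d\omega_1\,dy_1,
\]
where $\Omega(\alpha,t)$ is the semi-algebraic region defined by $(r-1)^2+y_1^2\le\varepsilon^2$, $|\omega_1|\le 1$ and $rs\omega_1+cy_1\le t$. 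The parity hypotheses are used exactly here: $k$ odd makes $(1-\omega_1^2)^{(k-3)/2}$ a polynomial, while $m$ even makes $(\varepsilon^2-(r-1)^2-y_1^2)^{(m-1)/2}$ a polynomial multiple of the single square root $\sqrt{\varepsilon^2-(r-1)^2-y_1^2}$. Hence $V^{+}(\alpha,t)$ is the period of an algebraic three-form on the two-sheeted cover of $\mathbb R^3$ over a family of semi-algebraic $3$-chains depending algebraically on $(\alpha,t)$.

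The heart of the proof is to show that the analytic continuation of this period to complex $(\alpha,t)$ has only finitely many branches. The ramification locus is a real algebraic curve in the $(\alpha,t)$-plane, corresponding to those parameter values at which the cutting hyperplane develops a tangency with the boundary pieces of $\Omega$ (the cylindrical surface $(r-1)^2+y_1^2=\varepsilon^2$ and the planes $\omega_1=\pm 1$). Around each smooth point of the discriminant a local Picard--Lefschetz analysis computes the monodromy action on the branches of $V^+$. The decisive feature of this body is the extra $\mathbb Z_2$-symmetry $x\mapsto -x$ on $\mathbb R^k$, together with the fact that $\partial K_\varepsilon$ is a single smooth component of the degree-$4$ real algebraic hypersurface $\{((|x|-1)^2+|y|^2-\varepsilon^2)((|x|+1)^2+|y|^2-\varepsilon^2)=0\}$; these force the Picard--Lefschetz reflections to generate a finite group.

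Finally, finite-valued analytic continuation with algebraic ramification locus is equivalent to being algebroid: the elementary symmetric functions of the finitely many branches are single-valued analytic (in fact algebraic) functions of $(\alpha,t)$ and provide the coefficients of an annihilating monic polynomial. The main obstacle is the monodromy analysis in the preceding paragraph, since generic period integrals of this type produce infinite monodromy via Dehn twists; the proof requires careful exploitation of the discrete symmetries and the reducibility of the ambient algebraic hypersurface that are specific to the tube around the sphere.
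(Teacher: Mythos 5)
Your setup is reasonable and broadly parallel to the paper's cited proof: both rely on Picard--Lefschetz theory to show that the analytic continuation of the cutoff function has finite ramification. The reduction to a two-variable integral via the $O(k)\times O(m)$-symmetry is a sensible move (though one would need to argue that finite ramification of the two-variable restriction of $V^{\pm}_K$ actually implies finite ramification of the full function on the $n$-parameter space of hyperplanes, which is not automatic when restricting to a slice). The real problem, however, is that the step you yourself call ``the heart of the proof'' is never carried out. You assert that the $\mathbb Z_2$-symmetry $x\mapsto -x$ (which is already contained in $O(k)$, so it is not ``extra'') and the alleged reducibility of the ambient hypersurface ``force the Picard--Lefschetz reflections to generate a finite group.'' No such forcing follows from these observations alone, and you correctly flag in the final sentence that generic period integrals of this type have infinite monodromy via Dehn twists. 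The entire content of the theorem is an explicit computation showing that the orbit of the relevant relative cycle under $\pi_1(P_n\setminus\Sigma)$ is finite -- the paper states that this orbit consists of exactly four elements -- and your proposal replaces this computation with a gesture towards it.

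There is also a concrete factual error. Clearing the square root $|x|=\sqrt{x_1^2+\cdots+x_k^2}$ from the defining equation gives the degree-$4$ polynomial $\bigl(|x|^2+|y|^2+1-\varepsilon^2\bigr)^2-4|x|^2$. Your claimed factorization into $\bigl((|x|-1)^2+|y|^2-\varepsilon^2\bigr)\bigl((|x|+1)^2+|y|^2-\varepsilon^2\bigr)$ involves $|x|$ itself, which is not a polynomial for $k\geq 2$; for $k\geq 2$ (in particular for every odd $k\geq 3$ covered by the conjecture) the complex degree-$4$ hypersurface is irreducible, so ``reducibility of the ambient algebraic hypersurface'' is not available. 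Without that, the claimed mechanism for finiteness evaporates. To close the gap you would have to actually perform the monodromy computation for this specific surface -- identify the vanishing cycles associated with the critical values of the linear function on the complexified boundary, compute the pairwise intersection numbers, and verify directly that the orbit of the cycle representing the cut-off part of $K$ is finite -- which is what the cited proof does.
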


So, to prove Conjecture \ref{mconj}, it suffices to check that this analytic continuation has only power growth at its singular points. {Even if this conjecture was confirmed one more Arnold's question would remain unsolved, namely, Problem 1990-27 of \cite{Ar}} asking whether there are {\em convex} algebraically integrable bodies in $\mathbb R^{2k+1}$ except for ellipsoids. \medskip

In any case, algebraically integrable bodies in $\mathbb R^{2k+1}$ are very rare. In  particular, the local geometry of their boundaries satisfies very strong conditions.

\begin{theorem}[\cite{VA}, \cite{APLT}]
\label{prep}
If a body $K \subset \mathbb R^{2k+1}$ is algebraically integrable, then

1$)$ the inertia indices of the second fundamental form of its boundary are even at all points where this form is non-degenerate;

2$)$ the algebraic closure of this boundary in $\C^n$ has no tame parabolic points.
\end{theorem}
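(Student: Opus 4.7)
The plan is to analyze the local behavior of the cutoff function $V_K^{\pm}(\xi,t)$ at a tangent hyperplane to $\partial K$ and then exploit the following dichotomy: an algebraic function admits only Puiseux-type (fractional-power) expansions near any point, whereas tangencies at points with ``bad'' second fundamental form force a logarithmic term into the expansion of $V_K^{\pm}$.

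Concretely, fix a point $x_0\in\partial K$ where the second fundamental form is non-degenerate with inertia indices $(p,q)$, $p+q=2k$. Let $\xi_0$ be the outward unit normal at $x_0$ and set $t_0=\langle x_0,\xi_0\rangle$. In local coordinates $(y,s)\in\mathbb R^{2k}\times\mathbb R$ centered at $x_0$ with the $s$-axis along $\xi_0$, the boundary is a graph $s=Q(y)+R(y)$ with $Q$ a non-degenerate quadratic form of signature $(p,q)$ and $R(y)=O(|y|^3)$. After a smooth partition of unity, the contribution to $V_K^{-}(\xi_0,t)$ from a neighborhood of $x_0$ equals, modulo a function analytic across $t=t_0$, an integral of the form
\begin{equation*}
 I(t)=\int_{Q(y)\le t-t_0}\bigl((t-t_0)-Q(y)-R(y)\bigr)\,\phi(y)\,dy,
\end{equation*}
where $\phi$ is a smooth cut-off.

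The second step is to compute the singular part of $I(t)$ at $t=t_0$. By the classical analysis of ramified integrals attached to a quadratic form (Gelfand--Shilov distributions $Q_\pm^\lambda$, or Vassiliev's Picard--Lefschetz method), $I(t)$ admits an asymptotic expansion whose leading non-analytic term is proportional to $(t-t_0)^{k+1}$ times a parity-dependent factor: when both $p$ and $q$ are \emph{even}, the coefficient of $\log|t-t_0|$ vanishes and only pure power terms survive; when both $p$ and $q$ are \emph{odd}, the coefficient of the logarithmic term is nonzero and is determined by the normalizing constants of the corresponding Euler-type integrals, so it cannot be canceled by the cubic remainder $R$. This local computation against the quadratic model is the heart of the argument.

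The third step concludes part 1): if $V_K^{\pm}$ is algebraic, then by Puiseux's theorem every local branch at $(\xi_0,t_0)$ admits a convergent expansion in fractional powers of $t-t_0$ with no $\log$ factors, so the logarithmic coefficient of $I(t)$ must vanish, forcing $p$ and $q$ to be both even. Part 2) follows from the same scheme applied to the analytic continuation of $V_K^{\pm}$ to the complexified space of hyperplanes: at a tame parabolic point one has an explicit local normal form for the defining equation of the algebraic closure of $\partial K$ in $\mathbb C^n$ (this is exactly what ``tame'' ensures), and the analogous model integral again produces a non-trivial logarithmic coefficient, i.e.\ monodromy of infinite order around the tangent hyperplane, which contradicts algebraicity. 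The main obstacle in both parts is to verify the non-vanishing of the logarithmic coefficient, i.e.\ to show that the higher-order perturbation $R(y)$ (respectively, the deviation of the parabolic point from its normal form) cannot kill the logarithmic term produced by the model; this is precisely where the non-degeneracy of $\mathrm{II}_{x_0}$ in part 1) and the tameness hypothesis in part 2) are essential, and it is also the point at which the detailed singularity-theoretic estimates of \cite{VA,APLT} are invoked.
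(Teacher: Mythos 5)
Your approach matches the paper's in essence: both reduce the claim to showing that a tangency of the cutting hyperplane with $\partial K$ at a point where the second fundamental form has the forbidden signature (or a tame parabolic tangency) forces a logarithmic term in the local expansion of the volume function, which is incompatible with a Puiseux expansion and hence with algebraicity. The survey's own justification of Theorem~\ref{prep} is a one-sentence pointer to ``local monodromy theory of isolated function singularities'' and the references \cite{VA,APLT}; your explicit model integral $I(t)$ together with the Gelfand--Shilov parity dichotomy on $(p,q)$ is a concrete unfolding of exactly that mechanism, and you correctly identify the non-vanishing of the logarithmic coefficient (stable under the remainder $R$) as the technical point deferred to those references.

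One correction is needed in your treatment of part~2): ``tame'' does \emph{not} mean that the algebraic closure of $\partial K$ admits an explicit local normal form at the parabolic point, and it is not a normal-form hypothesis at all. As the paper defines it immediately after the theorem statement, a parabolic point is tame if the tangent hyperplane at that point has no \emph{other} tangencies with the hypersurface in a neighborhood of the point; equivalently, the restriction of the linear phase function to $\partial K$ has an isolated critical point there. This is the isolation condition required for the local Milnor fiber and local monodromy of the function singularity to be well-defined, which is precisely what is needed to invoke the Picard--Lefschetz machinery of \cite{APLT}. With this corrected reading your scheme for part~2) still goes through: the local monodromy of the isolated but non-Morse singularity has infinite order, producing the logarithmic ramification of the analytic continuation that contradicts algebraicity, parallel to part~1).
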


Recall that a regular point of an affine hypersurface is {\em parabolic} if the second fundamental form is degenerate at this point; a parabolic point is called {\em tame} if the tangent hyperplane at this point has no other tangencies with the hypersurface in a neighborhood of this point.

\begin{remark} \label{remsec} \rm
If either of the two conditions of Theorem \ref{prep} is not satisfied, then not only the cutoff function $($\ref{E:VK}$)$ is not algebraic, but also the section function $($\ref{E:AK}$)$ is not algebraic.
\end{remark}

For additional restrictions on the geometry of algebraically integrable bodies, see \cite[Chapter III]{APLT} and  \cite[Chapter 7]{bo21}.

\medskip
In addition, we consider the property of {\em local} algebraic integrability. The volume function is regular analytic on the set of hyperplanes transversal to the boundary of the body; the set of tangent hyperplanes divides this set into several connected components. For example, the body bounded by the surface (\ref{mex}) with arbitrary $k$ and $m$ has four such components: the hyperplanes  from them intersect the hypersurface (\ref{mex}) along manifolds diffeomorphic to a) the empty set, b) $S^{k+m-2}$, c) $S^{k-1}\times S^{m-1}$, d) $S^{k-2} \times S^{m}$.

We call such a component a {\em lacuna} if the volume function coincides in it with an algebraic function.  A trivial example of a lacuna is the domain consisting of hyperplanes not intersecting the body. We show that this example is not unique even in the even-dimensional case.

\begin{proposition}[see \cite{matnot}] \label{mprop}
If $m$ is even $($and $k$ is arbitrary$)$, then the component of the set of generic hyperplanes  containing the hyperplane $x_1=0$ is a lacuna of the body bounded by the hypersurface
\begin{equation}
(x_1^2+\dots +x_k^2 -1)^2 + \left(y_1^2 + \dots + y_m^2\right)= \varepsilon ^2. \label{tnb}
\end{equation}
\end{proposition}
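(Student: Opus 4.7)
The starting point is the $y$-fibration $\pi:(x,y)\mapsto x$, which turns the characteristic function of $K$ into the density
\begin{equation*}
F(x) := \mathrm{vol}_m\!\bigl(\{y\in\mathbb R^m:(x,y)\in K\}\bigr) = c_m\bigl(\varepsilon^2-(|x|^2-1)^2\bigr)_+^{m/2}
\end{equation*}
supported on the annulus $A=\{x\in\mathbb R^k:||x|^2-1|\leq\varepsilon\}$. The crucial consequence of the parity of $m$ is that, in contrast to the Euclidean tube (\ref{mex}), here the quantity $\varepsilon^2-(|x|^2-1)^2$ is already a polynomial in the Cartesian coordinates $x_1,\dots,x_k$, so its $(m/2)$-th power is a genuine polynomial $P(x)$ of degree $2m$. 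Thus $F=P|_A$ is a polynomial on its support, and by Fubini
\begin{equation*}
V_K^+(\xi,t) = \int_A P(x)\,h_{\xi,t}(x)\,dx,
\end{equation*}
where $h_{\xi,t}(x)$ is the normalized volume of the $m$-ball of radius $R(x)=\sqrt{\varepsilon^2-(|x|^2-1)^2}$ cut by the half-space $\{y:\langle\xi_y,y\rangle\leq t-\langle\xi_x,x\rangle\}$.

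First I would analyze the ``horizontal'' stratum $\{\xi_y=0\}\cap\mathcal{C}$, which contains $\{x_1=0\}$ itself. Here $h_{\xi,t}(x)$ collapses to $\chi_{\{\langle\xi_x,x\rangle\leq t\}}$ and $V_K^+$ reduces to a polynomial integrated over the semialgebraic intersection of a half-space with $A$. Rotational reduction in $x$ (aligning $\xi_x$ with $e_1$ and exploiting that $P$ depends only on $|x|^2$) transforms this into a one-variable semialgebraic integral whose evaluation is visibly algebraic and admits an explicit closed form. This produces a concrete candidate algebraic function $\Phi(\xi,t)$ along the horizontal stratum.

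The main obstacle is to propagate algebraicity off the horizontal stratum to all of $\mathcal{C}$. For $\xi_y\neq 0$ the cap-volume $h_{\xi,t}(x)$ is a transcendental function of the signed depth of the cut --- involving inverse-trigonometric terms when $m$ is even --- so algebraicity cannot be established integrand-wise. The plan is to invoke the Petrovsky--Atiyah--Bott--G\aa rding lacuna criterion as adapted by Vassiliev: analytically continue $V_K^+$ to the complexified hyperplane parameter space and analyze its monodromy around the discriminant of complex hyperplanes tangent to $\partial K_{\mathbb{C}}$ via Picard--Lefschetz theory. Because of the tube structure, the relevant Petrovsky vanishing class factors through the meridional sphere $S^{m-1}$ encoding the $y$-direction, and the intersection index responsible for monodromy across $\partial\mathcal{C}$ switches sign with the parity of $m$; precisely when $m$ is even this index vanishes. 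The candidate $\Phi(\xi,t)$ from the previous step then extends by analytic continuation to all of $\mathcal{C}$, and the vanishing Petrovsky class guarantees that the extension is single-valued and algebraic. This Picard--Lefschetz input, which is the content of \cite{matnot} and \cite[Ch.~III]{APLT}, is where the technical core of the argument resides.
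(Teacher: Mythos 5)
Your reduction via the $y$-fibration (and the observation that for even $m$ the fiber volume $c_m\bigl(\varepsilon^2-(|x|^2-1)^2\bigr)^{m/2}$ is a polynomial) is a sound starting point, but both of your subsequent steps have real gaps. First, the claim that the horizontal stratum $\{\xi_y=0\}$ yields an integral that is ``visibly algebraic'' is unjustified: after slicing, the cutoff function along the pencil $x_1=t$ is an Abel-type integral $\int^t G(s)\,ds$ of an algebraic function $G$, and such integrals are \emph{typically transcendental} --- that is the whole point of Newton's Lemma 28 quoted in this survey. Concretely, writing $G(s)=c\int_{1-\varepsilon}^{1+\varepsilon}\bigl(\varepsilon^2-(v-1)^2\bigr)^{m/2}(v-s^2)^{(k-3)/2}\,dv$, the statement is immediate only when $k$ is odd (integer exponent, so $V$ is a polynomial in $t$ on the component); for even $k$ the termwise $s$-antiderivatives contain $\arcsin\bigl(t/\sqrt{1\pm\varepsilon}\,\bigr)$ and algebraicity hinges on nontrivial cancellation of these transcendental contributions. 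Since the proposition allows arbitrary $k$, this is precisely where the work lies, and carrying out that bookkeeping is essentially what the actual proof does: the paper proves Proposition \ref{mprop} in \cite{matnot} ``by explicit calculation of integrals.'' Nothing here is visible without computing.

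Second, your propagation step off the stratum is not a proof. The decisive input --- that the relevant Picard--Lefschetz intersection index or Petrovsky class vanishes exactly when $m$ is even, so that the continuation over the whole component is single-valued \emph{and algebraic} --- is asserted, and you yourself identify it as ``the content of \cite{matnot} and \cite[Ch.~III]{APLT}''; that makes the argument circular at its core. Even granting trivial monodromy around the walls of the component, that only yields finite-valuedness of the analytic continuation; algebraicity additionally requires power growth at the singular locus, exactly the caveat the survey makes right after Theorem \ref{mthm2} for the body \eqref{mex}. Finally, the logic of ``extending the candidate $\Phi$ from the stratum'' is ill-posed: $\Phi$ lives on a codimension-$m$ subset of the component, its values there do not determine any extension, and for $\xi_y\neq0$ the fiberwise cap volumes are (as you note) transcendental for even $m$, so one must analyze the volume function itself for generic hyperplanes --- no mechanism for doing so is actually supplied. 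In sum, your soft monodromy route is not carried out, and where it touches ground it presupposes the cited result, whereas the paper's proof proceeds by direct computation of the integrals.
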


In the case $k=3$ a more general class of examples is given in \cite{VA}. \medskip

\begin{remark}
\label{rempet} \rm
There is a deep analogy between this set of problems and Petrovsky's theory of lacunas of hyperbolic partial differential equations and systems (developed further by Leray, G$\mathring{\rm a}$rding, Atiyah, Bott a.o.). In particular, the radical difference in the behavior of both volume functions and solutions of hyperbolic PDE's in spaces of different parity of dimensions is explained by the fact that the intersection form in middle homology groups of complex varieties (which is the main part of {\em Picard--Lefschetz formulas} controlling the ramification of integrals) is symmetric or antisymmetric depending on the parity of the dimension. $G{\aa}rding$
\end{remark}

\subsection{Integrability and Picard--Lefschetz theory}

Let $K$ be a body in $\mathbb R^n$, the boundary of which is a smooth component of the hypersurface defined by a polynomial equation $F(x)=0$. Let
$A \subset \C^n$ be the set of complex zeros of this polynomial $F$. A complex affine hyperplane $X \subset \C^n$ is {\em generic} if its closure $\bar X \subset \CP^n$ is transversal to the stratified variety $ A \cup \CP^{n-1}_\infty$ where $\CP^{n-1}_\infty \equiv \CP^n \setminus \C^n$. Denote by $P_n$ the space of all affine hyperplanes in $\C^n$, and by $\Sigma $ its subset  consisting of non-generic hyperplanes.

By Thom's isotopy lemma (see e.g., \cite{GM}), pairs $(\C^n, A \cup X)$ form a locally trivial fiber bundle over the space $P_n \setminus \Sigma$ of generic hyperplanes $X$. In particular, there is a vector bundle over $P_n \setminus \Sigma$, whose fiber over a point $\{X\}$ is the relative homology group
\begin{equation}
\label{homo}
{\mathcal H} (X) \equiv H_n(\C^n, A \cup X; \C).
\end{equation}
Moreover the latter bundle admits a natural local trivialization (called {\em the Gauss--Manin connection}) defined by covering homotopy of relative cycles in the fibers $(\C^n, A \cup X)$ of the former bundle.
The group \begin{equation}
\label{funda}
\pi_1(P_n \setminus \Sigma, \{X\})
\end{equation} acts on the group (\ref{homo}) by monodromy operators defined by this connection. Explicit formulas for this action are provided by {\em the Picard--Lefschetz theory}; see e.g., \cite{AVG},  \cite{APLT}.

It is easy to see that  integrals of the volume form \begin{equation}
dx_1 \wedge \dots \wedge dx_n
\label{volu}
\end{equation} along the elements of the group (\ref{homo}) are well-defined and form a  linear function on this group for any $X$.

For any hyperplane $\{X\} \in P_n \setminus \Sigma$ and any element $\gamma \in {\mathcal H}(X)$, define a function on any simply-connected neighborhood of the point $\{X\}$ in $P_n$ as follows: its value at the point $\{X'\}$ is equal to the integral of the form (\ref{volu}) along the element of the group
$
{\mathcal H} (X')$ obtained from $\gamma$ by the Gauss--Manin continuation over an arbitrary path connecting $\{X\}$  and $\{X'\}$ in our neighborhood. This function is holomorphic and so can be continued to an analytic function on the entire set $P_n \setminus \Sigma$.

If a hyperplane $X$ is {\em real} (i.e., its intersection with ${\mathbb R}^n \subset \C^n$ is a hyperplane in $\mathbb R^n$), and $\gamma$ is the homology class of one of parts cut by $X$ from our body $K$, then this analytic function coincides in the set of neighboring {\em real} hyperplanes $\{X'\} \approx \{X\}$ with one of the two branches of the volume function participating in the definition of algebraic integrability. If our body is algebraically integrable, then this analytic function is algebraic and, in particular, its analytic continuation to the space $P_n$ of complex hyperplanes is finitely-valued.

Therefore, to prove the non-integrability of a body it is enough to present a real generic hyperplane $X$ such that the integrals of the volume form take infinitely many values on the orbit of this element $\gamma \in {\mathcal H}(X)$ under the monodromy action of the group (\ref{funda}).

\subsubsection{Example: convex case}

Let $n$ be even,
$F: \mathbb R^n \to \mathbb R$ be a polynomial, and $K$ be a bounded {\em convex} connected component of the subset in $\mathbb R^{n}$ where $F \leq 0$; suppose that its boundary $\partial K$ is smooth. The restriction of any linear function $L: \mathbb R^{n} \to \mathbb R$ to $\partial K$ has exactly two critical points. By Sard's lemma we can choose $L$ in such a way that these critical points will be Morse. Denote by $m$ and $M$ the minimal and maximal values of this restriction. For any generic value $t \in (m,M)$ denote by  $[K]_-(t)$ and $[K]_+(t)$ two elements of the group  ${\mathcal H}(L^{-1}(t))$ defined by the positively oriented domains
 \ $K \cap \{x| L(x) \leq t\}$ \ and \ $K \cap \{x| L(x) \geq t\}$ \ respectively. Fix a  generic point $t_0$ of the interval $(m,M)$ so that the hyperplane $L^{-1}(t_0)$ does not belong to $\Sigma$. Let $\alpha$ and $\beta$ be two elements of the group $\pi_1(P_n \setminus \Sigma, L^{-1}(t_0))$ defined by {\em pinches} related to the segments $[m,t_0]$ and $[t_0, M]$ (that is, loops consisting of hyperplanes $L^{-1}(t)$, where $t \in \C^1$ goes from $t_0$ to a very small neighborhood of the point $m$ or $M$ along the segment, then turns in the positive direction around this point and comes back to $t_0$ along the same path).

\begin{lemma}
\label{lem1}
Monodromy along the  loop $\alpha$ moves the class \ $[K]_-(t_0)$ \ to  \ $-[K]_-(t_0)$.  Monodromy along $\beta$ moves \ $[K]_+(t_0)$ to \ $-[K]_+(t_0)$.
\end{lemma}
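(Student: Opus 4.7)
\emph{Proof plan.} The plan is to reduce the computation of $\alpha_*$ to an explicit Picard--Lefschetz calculation on the Lefschetz thimble attached to the Morse critical point $p_-$ at which $L|_{\partial K}$ attains its minimum $m$. Since $\alpha$ encircles only the critical value $m$ and the Gauss--Manin connection is a linear isomorphism of $\mathcal{H}(L^{-1}(t))$ along paths in $P_n \setminus \Sigma$, we may first transport $[K]_-(t_0)$ along the segment $[m, t_0]$ down to a time $t = m + \tau$ with $|\tau|$ arbitrarily small, compute the monodromy there, and then transport back. Along the real segment $(m, t_0]$ the cycle $[K]_-(t)$ varies smoothly, so this initial transport sends $[K]_-(t_0)$ to the real paraboloidal cap near $p_-$ described next.

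In a neighborhood of $p_-$, the (parametric) real-analytic Morse lemma supplies coordinates $(x_1, \ldots, x_n)$ centered at $p_-$ in which $L(x) = m + x_1$ and $\partial K$ coincides with the paraboloid $\{x_1 = x_2^2 + \cdots + x_n^2\}$. In these coordinates the transported cycle is
\[
C_\tau = \{x \in \mathbb{R}^n : x_2^2 + \cdots + x_n^2 \le x_1 \le \tau\}.
\]
To continue $[C_\tau]$ analytically through complex $\tau$, parametrize $C_\tau$ by the fixed model region $D = \{(s, \tilde{x}) \in \mathbb{R}^n : \tilde{x}_2^2 + \cdots + \tilde{x}_n^2 \le s \le 1\}$ via
\[
\phi_\tau(s, \tilde{x}_2, \ldots, \tilde{x}_n) = \bigl(\tau s,\ \sqrt{\tau}\,\tilde{x}_2,\ \ldots,\ \sqrt{\tau}\,\tilde{x}_n\bigr).
\]
For any chosen branch of $\sqrt{\tau}$, $\phi_\tau$ sends $\{s = \tilde{x}_2^2 + \cdots + \tilde{x}_n^2\}$ into the complex paraboloid $A$ and $\{s = 1\}$ into $L^{-1}(m+\tau)$, so it defines a holomorphic family of relative chains in $(\mathbb{C}^n, A \cup L^{-1}(m+\tau))$, which is the Gauss--Manin horizontal lift of $[C_\tau]$.

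When $\tau$ traverses the encircling portion of $\alpha$ in the positive direction, the branch of $\sqrt{\tau}$ acquires a factor $-1$, so the transported parametrization equals $\phi_\tau \circ \sigma$, where $\sigma(s, \tilde{x}_2, \ldots, \tilde{x}_n) = (s, -\tilde{x}_2, \ldots, -\tilde{x}_n)$ has Jacobian determinant $(-1)^{n-1}$. This is exactly where the evenness of $n$ enters: $(-1)^{n-1} = -1$, so $\sigma$ reverses the orientation of $D$, whence $[\phi_\tau \circ \sigma] = -[C_\tau]$ in $\mathcal{H}(L^{-1}(m+\tau))$. Transporting back to $t_0$ by linearity of Gauss--Manin yields $\alpha_*[K]_-(t_0) = -[K]_-(t_0)$. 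The identity $\beta_*[K]_+(t_0) = -[K]_+(t_0)$ follows from the same argument applied at the Morse maximum $p_+$ of $L|_{\partial K}$.

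The main difficulty in carrying the plan out is bookkeeping: one must check that $\phi_\tau$ really provides the Gauss--Manin lift (so that the loop in $\tau$ genuinely realizes the monodromy of the class $[K]_-(t_0)$ rather than of some auxiliary chain such as a vanishing sphere in $A \cap L^{-1}(m+\tau)$), and keep the orientation conventions for $D$, for $C_\tau$, and for the complexified ambient $\mathbb{C}^n$ mutually consistent throughout the deformation. Once these points are settled, the parity identity $(-1)^{n-1} = -1$ for even $n$ produces the sign stated in the lemma.
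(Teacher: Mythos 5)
Your argument is correct and amounts to an explicit unpacking of what the paper's one-line proof (``this lemma easily follows from the Picard--Lefschetz formula'') hides. You transport $[K]_-(t_0)$ along the segment to the Morse model at the minimum, realize it as the holomorphic family of thimbles $\phi_\tau(D)$, and observe that circling $\tau$ once around $0$ replaces $\sqrt{\tau}$ by $-\sqrt{\tau}$, i.e.\ precomposes the parametrization with $\sigma$, whose Jacobian $(-1)^{n-1}$ is $-1$ precisely for $n$ even; the paper instead simply cites the Picard--Lefschetz formula. Two places where you flag a worry are in fact standard and close cleanly: the family $\tau\mapsto[\phi_\tau(D)]$ is continuous, and any continuous family of relative cycles automatically yields a Gauss--Manin flat section, so its value at the end of the pinch loop \emph{is} the monodromy image of the transported class; and the parametric Morse lemma (applied only in the $n-1$ variables transverse to $L$) can be taken real-analytic because $\partial K$ sits inside the zero locus of a polynomial, which keeps $L=m+x_1$ linear and makes the holomorphic continuation of the coordinates legitimate. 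What your route buys over the paper's citation is that it is entirely self-contained and does not require invoking Picard--Lefschetz theory in the relative/thimble form, which is stated in most references only for vanishing cycles in the fiber rather than for the relative class you need; what it costs is the explicit coordinate bookkeeping you already carry out.
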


\noindent
This lemma easily follows from the Picard--Lefschetz formula; see e.g., \cite{APLT}.
\medskip

Of course, these loops (and arbitrary elements of $\pi_1(P_n \setminus \Sigma, L^{-1}(t_0))$) do not change the cycle $[K] \equiv [K]_-(t_0)+ [K]_+(t_0)$
which defines an element of the groups ${\mathcal H}(X)$ for all $X$ simultaneously.

Denote by $v(t)$ the volume of the domain $[K]_-(t)$.

\begin{corollary}\label{cor1}
The analytic continuation along the loop $\alpha$ $($respectively, $\beta)$ moves  the function $v(t)$ to $-v(t)$ $($respectively, to $2V-v(t)$, where $V$ is the volume of the entire domain $K)$. In particular, for any integer $p$ the analytic continuation along the loop $(\alpha  \beta)^p $ moves $v(t)$ to $v(t) +2pV$.
\end{corollary}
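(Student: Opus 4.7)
The plan is to leverage the fact that integration of the volume form $\omega = dx_1 \wedge \cdots \wedge dx_n$ is a linear functional on the homology group $\mathcal{H}(X)$, so analytic continuation of $v(t)$ along any loop $\gamma \in \pi_1(P_n \setminus \Sigma, L^{-1}(t_0))$ amounts to replacing the integration cycle $[K]_-(t_0)$ by its image under the monodromy $\gamma_*$. The whole computation therefore reduces to tracking the action of $\alpha_*$ and $\beta_*$ on the two basic relative cycles $[K]_-(t_0)$ and $[K]_+(t_0)$, with the essential input being Lemma \ref{lem1} together with the fact, noted immediately after that lemma, that the absolute cycle $[K] = [K]_-(t_0) + [K]_+(t_0)$ is invariant under all monodromy.

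For the loop $\alpha$, Lemma \ref{lem1} gives $\alpha_*[K]_-(t_0) = -[K]_-(t_0)$ directly, so integrating $\omega$ yields the branch $-v(t)$. For the loop $\beta$, Lemma \ref{lem1} provides only $\beta_*[K]_+(t_0) = -[K]_+(t_0)$, so I would use the invariance $\beta_*[K] = [K]$ to obtain
\[
\beta_*[K]_-(t_0) = [K] - \beta_*[K]_+(t_0) = [K] + [K]_+(t_0) = [K]_-(t_0) + 2[K]_+(t_0).
\]
Integration gives $v(t_0) + 2(V - v(t_0)) = 2V - v(t_0)$, as claimed.

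For the last statement, the same invariance trick produces $\alpha_*[K]_+(t_0) = 2[K]_-(t_0) + [K]_+(t_0)$. Composing,
\[
(\alpha\beta)_*[K]_-(t_0) = \alpha_*\!\left([K]_-(t_0) + 2[K]_+(t_0)\right) = -[K]_-(t_0) + 2\bigl(2[K]_-(t_0) + [K]_+(t_0)\bigr) = [K]_-(t_0) + 2[K].
\]
Since $[K]$ is $(\alpha\beta)_*$-invariant, a one-line induction on $p$ gives $(\alpha\beta)^p_*[K]_-(t_0) = [K]_-(t_0) + 2p[K]$, and integrating $\omega$ over this cycle yields $v(t) + 2pV$.

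There is no real conceptual obstacle beyond Lemma \ref{lem1}; the only thing one must be careful with is orientation and composition-order bookkeeping, namely fixing the convention under which $(\alpha\beta)_* = \alpha_* \circ \beta_*$ and verifying that the geometric orientations of $[K]_\pm(t_0)$ are those for which $\int_{[K]_-(t_0)}\omega = v(t_0)$ and $\int_{[K]_+(t_0)}\omega = V - v(t_0)$ with the correct signs. Once these conventions are pinned down, the three assertions of the corollary are immediate.
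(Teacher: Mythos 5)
Your proof is correct and is exactly the intended argument: the paper states the corollary without further proof, leaving it to follow from Lemma \ref{lem1} together with the remark that $[K]=[K]_-(t_0)+[K]_+(t_0)$ is monodromy-invariant, which is precisely the chain you spell out. Your caveat about the composition convention is well taken but harmless here, since the conclusion ``$v(t)+2pV$ for every integer $p$'' is unchanged if one uses the opposite convention and thereby replaces $p$ by $-p$.
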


So, these  continuations take infinitely many values at one and the same point  $L^{-1}(t_0)   \in P_{n} \setminus \Sigma$, and the function $v$ cannot be algebraic.
\medskip

An explicit construction of the loop in $P_n \setminus \Sigma$ increasing the volume function by twice the volume of the body (and hence proving  the non-algebraicity of this function) can be presented also for arbitrary bodies with smooth boundaries in $\mathbb R^2$; see \cite{Vas}. For greater even $n$ and general (non-convex) bodies, we have only a non-constructive proof of Theorem \ref{even}, based on the theory of reflection groups; see the next section.

\subsection{Outline of the proof of Theorem \ref{even}}

\subsubsection{General scheme}
\label{sche}

Let $K$ be an arbitrary domain in $\mathbb R^n$ ($n$ even) bounded by a $C^\infty$-smooth component $\partial K$ of the set $\{x| F(x)=0\}$. Again, let $L: (\C^{n},\mathbb R^n)  \to (\C,\mathbb R) $ be a real linear function, the restriction of which to $\partial K$ is strictly Morse.

Starting from these data, we will construct an integer lattice $\Z^r$, a $\Z$-valued bilinear form $\langle \cdot, \cdot \rangle$ on it, and a system of generators of $\Z^r$ (corresponding to all critical points of $L$ on $\partial K$). If $K$ is algebraically integrable, then the subgroup of the orthogonal group of the space $\Z^r \otimes \mathbb R$ generated by reflections in hyperplanes orthogonal (in the sense of our bilinear form) to these generators should be finite, i.e., to be a Weyl group. All Weyl groups are well-known; it is known, in particular, that they do not admit non-trivial elements of the lattice which are invariant under all reflections. On the other hand, we will present such an invariant element and so get an obstruction to integrability of $K$.

\subsubsection{Lattice}
Let $m=m_1 < m_2 < \dots < m_q = M$ be all critical values of the function $L|_{\partial K}$, and $t_0 \in [m,M]$ be a generic value so that the hyperplane $L^{-1}(t_0)$ does not belong to $\Sigma \subset P_n$. Let $O_1, \dots, O_q \in {\mathbb R}^{n}$ be corresponding critical points. By the Morse lemma, for a small ball $B_j \subset \C^n$ centered at any of these points $O_j,$ and a sufficiently small (compared with the size of $B_j$) positive number $\varepsilon$, all groups $H_n(B_j, B_j \cap (A \cup L^{-1}(m_j+ \tau)))$, $\tau \in (0, \varepsilon)$, are isomorphic to $\Z$ and are generated by some relative cycles $\Delta_j(\tau)$ called {\em vanishing cycles}.
Let us fix arbitrarily an orientation of these vanishing cycles which depends continuously on $\tau$ and consider the function $v_j$ on the interval $(m_j, m_j+ \varepsilon)$, whose value  at the point \ $m_j +\tau$ \ is equal to the integral of the  form  (\ref{volu}) along the cycle  $\Delta_j(\tau)$. This function is analytic there; its values on the interval are real or purely imaginary depending on the parity of the Morse index of the critical point $O_j$ of the function $L|_{\partial K}$. The rotation of $\tau$ around the origin in $\C^1$ moves the vanishing cycle $\Delta_j(\tau)$ to minus itself, therefore the function $w_j(\tau) \equiv v_j(m_j+\tau)$ splits on the interval $(0,\varepsilon)$ into a power series in half-integer (but not integer) powers of $\tau$.

Let us connect a distinguished point $m_j+\tau_j$ of each interval $(m_j, m_j+\varepsilon)$ and the non-critical value $t_0$ by a path in $\C^1$ going along the real line in the upper half-plane. Let $\bar v_j$ be the germ at the point $t_0$ of the analytic continuation of the function $v_j$ along this path: its value at $t_0$ is equal to the integral of the form (\ref{volu}) along the element $\bar \Delta_j \in {\mathcal H}( L^{-1}(t_0))$ obtained from the vanishing cycle $\Delta_j(\tau_j)$ (considered as an element of the group ${\mathcal H}(L^{-1}(m_j+\tau_j))$) by the Gauss--Manin connection along this path.

Consider the group $\Z^q$ of formal linear combinations of germs $\bar v_j$ with integer coefficients. The obvious evaluation homomorphism maps this group into the space of germs of holomorphic functions at $t_0$. The lattice $\Z^r$ promised in \ref{sche} is the image of this homomorphism.

\subsubsection{Bilinear form and reflection group}
Define first a bilinear form on the lattice $\Z^q$ of formal linear combinations of germs $\bar v_j$. Consider the chain of homomorphisms
\begin{equation}
\label{mv}
\Z^q \to H_n(\C^n, A \cup L^{-1}(t_0)) \to H_{n-1}(A \cup L^{-1}(t_0)) \to H_{n-2}(A \cap L^{-1}(t_0)),
\end{equation}
the first of which
maps any formal sum $\sum \alpha_j \bar v_j$ to the homology class of the cycle $\sum \alpha_j \bar \Delta_j$, the second  is the boundary operator, and the third is the differential of the Mayer--Vietoris exact sequence.
The bilinear form in the lattice $\Z^q$ is lifted by this composite map from the intersection form in the (smooth part of) $(n-2)$-dimensional complex variety $A \cap L^{-1}(t_0)$.

\begin{lemma}\label{lem2}
This bilinear form can be lowered to the lattice $\Z^r$.
\end{lemma}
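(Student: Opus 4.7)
\emph{Proof plan.} Let $\phi\colon \Z^q \to \Z^r$ denote the evaluation homomorphism and put $N := \ker \phi$. The lemma is equivalent to showing $\langle N, \Z^q\rangle = 0$, where $\langle\cdot,\cdot\rangle$ is the bilinear form on $\Z^q$ pulled back from the intersection form on $H_{n-2}(A \cap L^{-1}(t_0))$ via the chain (\ref{mv}). Denote the composition in (\ref{mv}) by $\Phi\colon \Z^q \to H_{n-2}(A \cap L^{-1}(t_0))$. It is enough to check that $\Phi(\eta)$ has zero intersection with $\Phi(\bar v_j)$ for every generator $\bar v_j$, since the $\Phi(\bar v_j)$ generate the image.

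The key structural observation is that $\pi_1(P_n \setminus \Sigma, L^{-1}(t_0))$ acts compatibly on both sides of $\phi$: by the Gauss--Manin connection on the image in $H_n(\C^n, A \cup L^{-1}(t_0))$ (and thence on $H_{n-2}$ by naturality) and by analytic continuation on germs at $t_0$. Because the volume form (\ref{volu}) is globally holomorphic, integration commutes with covering homotopy of relative cycles, so $\phi$ is monodromy-equivariant and $N$ is stable under the action.

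I would then apply the Picard--Lefschetz formula to the pinch loop around each critical value $m_j$. Transferred through (\ref{mv}), its monodromy acts as a reflection in $\Phi(\bar v_j)$:
\[
(\sigma_j - \mathrm{id})(\Phi(\eta)) = - \langle \Phi(\eta), \Phi(\bar v_j)\rangle\, \Phi(\bar v_j).
\]
Since $\eta \in N$ forces the associated analytic function to vanish in a neighborhood of $t_0$ and hence, by analytic continuation and monodromy-stability, everywhere on $P_n \setminus \Sigma$, the same holds after monodromy; therefore $-\langle \Phi(\eta), \Phi(\bar v_j)\rangle \bar v_j$ also lies in $N$. But each $\bar v_j$ admits a nonzero half-integer power series expansion in $\tau = t - m_j$ along the distinguished path back to $t_0$ (from the half-integer branching noted in the construction of $\Z^r$), so $\bar v_j \notin N$. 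The scalar coefficient $\langle\Phi(\eta), \Phi(\bar v_j)\rangle$ must therefore vanish. Running this for every $j$ gives the required orthogonality and concludes that the form descends to $\Z^r$.

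\emph{Main obstacle.} The technical heart of the argument is the identification of the Picard--Lefschetz reflection inside $H_{n-2}(A \cap L^{-1}(t_0))$ received through the chain (\ref{mv}), starting from its standard formulation in terms of vanishing spheres on the smooth fiber near each critical point $O_j$. One must trace the Mayer--Vietoris differential carefully and keep track of signs, orientations, and the Morse indices that distinguish real from purely imaginary periods; the even parity of $n$ enters here to guarantee symmetry of the intersection form and to ensure that vanishing in the left radical suffices for descent of the whole bilinear pairing.
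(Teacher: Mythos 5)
Your argument is correct and is essentially the paper's own proof restated contrapositively: for $\eta$ in the kernel of evaluation, analytically continue the (zero) function along the pinch loop around $m_j$, use Picard--Lefschetz to see that the continuation adds $\pm\langle\eta,\bar v_j\rangle\,\bar v_j$, and conclude the pairing vanishes because $\bar v_j$ is a nonzero germ. The only cosmetic differences are that you justify $\bar v_j\neq 0$ via the half-integer fractional-power expansion at $m_j$ (the paper asserts it directly) and you phrase the conclusion via an explicit kernel $N$ rather than as a contradiction.
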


\noindent
{\em Proof.} Suppose that a linear combination $\sum \alpha_j \bar v_j$, $\alpha_j \in \Z$, defines the zero germ at $t_0$, but its pairing
$\left \langle \sum \alpha_j \bar v_j, \bar v_l \right \rangle$
with some element $\bar v_l$ is a non-zero number $C$. Consider the ``pinch'' loop in $\C^1$ starting and ending at $t_0$, embracing the critical value \ $m_l$ \ and running twice along our path connecting the points $t_0$ and $m_l +\tau_l$. This loop defines an element of the group $\pi_1(P_n \setminus \Sigma, L^{-1}(t_0))$: any point $t \in \C^1$ is associated with the hyperplane $L^{-1}(t)$.  According to the Picard--Lefschetz formula, the analytic continuation of our {\em zero function} $\sum  \alpha_j \bar v_j$ along this loop adds to it the (definitely non-zero) function $\bar v_l$ with coefficient $\pm C \neq 0$. \hfill $\Box$ \medskip

Such  analytic continuations of the functions $\bar v_j$ along all $q$ pinch loops preserve the lattice $\Z^r$. By the Picard--Lefschetz formula, they act on this lattice as reflections in hyperplanes orthogonal to corresponding elements $\bar v_l$ with respect to our bilinear form, in particular the pinch corresponding to \ $m_l$ \ moves \ $\bar v_l$ \ to \ $-\bar v_l$. Consider the subgroup of the orthogonal group of $\Z^r \otimes {\mathbb R}$  generated by these $q$  reflections.

\subsubsection{If $K$ is integrable then this reflection group is finite}

\begin{lemma}[see \cite{Vas}]
\label{lem3}
The class of the domain $K \cap \{x| L(x) \leq t_0\}$ $($respectively, $K \cap \{x| L(x) \geq t_0\})$ in the group ${\mathcal H}( L^{-1}(t_0))$ is equal to the sum of $($appropriately oriented$)$ the vanishing cycles $\bar \Delta_j$ over all $j$ such that $m_j < t_0$ $($respectively, $m_j>t_0)$.
\end{lemma}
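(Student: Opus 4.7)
The plan is to track $\Phi(t) := [K \cap \{L(x) \le t\}]$ in $\mathcal{H}(L^{-1}(t))$ as $t$ runs along the real axis, and to show that $\Phi$ jumps by exactly one (appropriately oriented) vanishing cycle each time $t$ crosses a critical value $m_j$. Summing these contributions from $-\infty$ up to $t_0$ will give the desired formula.

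First I would note that $\Phi(t) = 0$ for $t < m_1$, since $K \cap \{L \le t\} = \emptyset$ there. On any noncritical interval $(m_{j-1}, m_j)$ (with the conventions $m_0 = -\infty$ and $m_{q+1} = +\infty$), the pairs $(\mathbb{C}^n, A \cup L^{-1}(t))$ are locally trivial by Thom's isotopy lemma, and the real sets $K \cap \{L \le t\}$ move by an ambient isotopy of $\mathbb{R}^n$ preserving the pair $(\mathbb{R}^n, \partial K \cup L^{-1}(t))$. Hence $\Phi(t)$ is a flat section of the Gauss--Manin connection on each such interval, and it remains to identify the jump of $\Phi$ across a single critical value $m_j$, compared after Gauss--Manin transport along a short arc in the upper half-plane.

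For the local computation at $m_j$, choose a ball $B_j \subset \mathbb{C}^n$ centered at $O_j$ that contains no other critical points of $L|_{\partial K}$, and take $t^\pm = m_j \pm \tau$ with $\tau$ small. Outside $B_j$, the restriction $L|_{\partial K}$ has no critical value in $[t^-, t^+]$, so a gradient-like isotopy identifies the portions of $K \cap \{L \le t^\pm\}$ lying outside a slightly smaller ball. Consequently the difference chain (after Gauss--Manin transport to $\mathcal{H}(L^{-1}(t^+))$) is supported in $B_j$ and, by excision, represents an element of $H_n(B_j, B_j \cap (A \cup L^{-1}(t^+)))$. This group is infinite cyclic with generator $\Delta_j(\tau)$, and direct inspection of the Morse normal form for $L|_{\partial K}$ at $O_j$ identifies the real cap $K \cap \{t^- \le L \le t^+\} \cap B_j$ with $\pm \Delta_j(\tau)$, the sign depending only on the Morse index of $L|_{\partial K}$ at $O_j$. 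Fixing once and for all the orientation of $\Delta_j$ that makes this sign positive realizes the ``appropriate orientation'' in the statement of the lemma.

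Iterating the jump computation across each critical value $m_j < t_0$ and transporting via the upper half-plane paths used to define the $\bar\Delta_j$ gives $\Phi(t_0) = \sum_{m_j < t_0} \bar\Delta_j$. The assertion for $K \cap \{L \ge t_0\}$ follows symmetrically by sweeping $t$ from $+\infty$ down to $t_0$, or equivalently by subtracting from the globally defined class $[K] = \sum_{j=1}^{q} \bar\Delta_j$ already mentioned in Corollary \ref{cor1}. I expect the main obstacle to be the sign bookkeeping: verifying that the gained real region at each Morse crossing equals the chosen vanishing cycle with the correct sign amounts to a local Picard--Lefschetz calculation in Morse coordinates, and the conventions must be chosen compatibly with those implicit in Lemmas \ref{lem1}--\ref{lem2} and Corollary \ref{cor1}.
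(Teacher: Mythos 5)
The paper itself gives no proof of this lemma; it simply cites \cite{Vas} and proceeds, so there is no in-paper argument to compare against. Your Morse-theoretic sweep is the natural one and is presumably the argument of \cite{Vas}: track $\Phi(t)=[K\cap\{L\le t\}]$ as $t$ increases, show it is a flat section of the Gauss--Manin connection on non-critical intervals, identify the jump across each $m_j$ with a vanishing cycle, and sum.

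Two places in your sketch need genuine work, not just bookkeeping. First, the ``flat section'' claim on $(m_{j-1},m_j)$ is not automatic: the non-generic locus $\Sigma$ in the pencil $\{L^{-1}(t)\}$ is not just $\{m_1,\dots,m_q\}$; it can contain additional real $t$-values coming from non-real tangencies of $L^{-1}(t)$ with the complexification $A$ of $\partial K$. This is precisely why the paper routes all of its transport paths slightly into the upper half-plane rather than along $\mathbb R$. To make the flatness argument work you must detour around such points and then argue that the corresponding monodromy fixes $\Phi(t)$ --- which does hold, by Picard--Lefschetz, because the vanishing cycle of a non-real tangency has zero intersection number with the real chain $K\cap\{L\le t\}$ --- but that step is absent from your write-up. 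Second, you assert that the difference chain across $m_j$ is a \emph{generator} of the local group $H_n(B_j,B_j\cap(A\cup L^{-1}(t^+)))\cong\mathbb Z$, not merely some integer multiple of $\Delta_j(\tau)$. You flag this as ``sign bookkeeping,'' but the multiplicity-one statement is itself a real local Morse/Picard--Lefschetz computation and must be spelled out. Finally, a small slip: the identity $[K]=\sum_j\bar\Delta_j$ you offer as an alternative for the second half of the statement is not asserted in Corollary~\ref{cor1}; it is a consequence of the lemma itself (take $t_0>M$), so that reduction is circular, and you should use the downward sweep from $+\infty$ instead.
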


If $K$ is algebraically integrable, then the volume of the domain $K \cap \{x| L(x) \leq t\}$ should be an algebraic function of $t$, hence the sum $\sum \bar v_j$ over $j$ such that $m_j < t_0$ should have a finite orbit under the action of our reflection group in the space $\Z^r$. Replacing $t_0$ with a point $t'_0$ from another interval of non-critical values in the segment $[m,M]$ we prove the analogous statement for the sum of similar germs $\bar v'_j $ at the point $t'_0$ over all $j$ such that $m_j < t'_0$. Identifying then spaces of germs at points $t_0$ and $t'_0$ by analytic continuation along a path between these points in the upper half-plane of $\C^1$ we prove that all sums $\sum_{j \leq s} \bar v_j $  for arbitrary $s=1, \dots, q$ have finite orbits under our reflection group in $\Z^r$.

Therefore, also the orbits of all particular generators $\bar v_j$ of this lattice should be finite, which implies the finiteness of the entire reflection group. \medskip

\begin{remark} \rm
The group  $\pi_1(P_n \setminus \Sigma, L^{-1}(t_0))$ acts transitively on the set of
all vanishing cycles $\bar \Delta_j$ (although the action of only its subgroup generated by our pinch loops may be not sufficient for this), see \cite{Vas}.
\end{remark}

\subsubsection{Invariant element}

By Lemma \ref{lem3}, the sum of all $q$ function germs $\bar v_j$ is the constant function equal to the volume of the entire body $K$. This volume is positive, therefore this sum is a non-zero element of the lattice $\Z^r$. On the other hand, it is invariant under all our reflections: indeed, the homomorphism (\ref{mv}) is obviously trivial on it; moreover a non-trivial action on it of some reflection would imply a non-trivial ramification of the constant function. Therefore, our reflection group cannot be finite: otherwise it would be one of the (well-known) Weyl groups that do not admit non-trivial invariant lattice elements.

\subsection{On proofs of other statements}

The proof of Theorem \ref{mthm2} consists of an explicit calculation of the monodromy action of the group $\pi_1(P_n \setminus \Sigma, \{X\})$ on the space ${\mathcal H}(X)$: the common orbit of all vanishing cycles which can participate in domains cut by hyperplanes from the body (\ref{mex}) consists of exactly four elements. Theorem \ref{prep} and Remark \ref{remsec} follow from the {\em local} monodromy theory of isolated function singularities: the violation of either of its two conditions at a point of $A$  implies a logarithmic ramification of the analytic continuation of the volume function in an arbitrary neighborhood of the tangent hyperplane at such a point.
Proposition \ref{mprop} is proved in \cite{matnot} by explicit calculation of integrals.

\section{Polynomially integrable convex bodies} \label{S:S3}

In this section we completely characterize infinitely smooth (having infinitely smooth boundary) polynomially integrable bodies; see Definition \ref{D:poly-int}. Theorem \ref{odd-dim}
immediately implies that the only such bodies in odd dimensions are ellipsoids, as it was proved in \cite{KMY}. On the other hand,
Theorem \ref{even-dim} generalizes the result from \cite{Ag} that there are no such bodies in even dimensions.
\smallbreak

Let $K$ be an  infinitely smooth  convex body in $\mathbb R^n$ that is polynomially integrable, i.e.,
$$	
	A_{K}(\xi,t)=\sum_{k=0}^N a_k(\xi)\ t^k
$$	
	for some integer $N$, all $\xi\in S^{n-1}$, and all $t$ for which
	the set $K\cap \{x: \langle x,\xi\rangle =t\}$ is non-empty. Here, $a_k$ are functions on the sphere.

 Since the function $\xi \to A_K(\xi,t)$ is continuous, all the coefficients $a_k(\xi)$ are continuous functions on $S^{n-1}.$
Without loss of generality we can assume that the origin is an interior point of $K$, since polynomial integrability is invariant under translations. Observe that for all $k>N$ and all $\xi\in S^{n-1}$ we have 	$$\frac{\partial^k}{\partial t^k} A_K(\xi,t)\Big|_{t=0}=0.$$
We will use this  to conclude that $K$ is an ellipsoid in odd dimensions. First let us show that in the case of centrally symmetric bodies we need much less information.

\begin{theorem}\label{or-sym}
	Let $K$ be an infinitely smooth origin-symmetric convex body in $\mathbb R^n$, where $n$ is odd. Suppose that for some   even integer  $k> n$  and all $\xi\in S^{n-1}$ we have
	\begin{equation}\label{der_k}\frac{\partial^k}{\partial t^k} A_K(\xi,t)\Big|_{t=0}=0\end{equation}
	and
	\begin{equation}\label{der_k+2}\frac{\partial^{k+2}}{\partial t^{k+2}} A_K(\xi,t)\Big|_{t=0}=0.
	\end{equation}
	Then $K$ is an ellipsoid.
\end{theorem}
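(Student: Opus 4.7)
The plan is to use Koldobsky's Fourier-analytic machinery for sections of convex bodies to convert the two derivative-vanishing hypotheses into polynomial identities for certain powers of the Minkowski functional $\|\cdot\|_K$, and then to extract a quadratic form from these identities by a short unique-factorization argument.

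First, I would invoke Koldobsky's formula (see his monograph \emph{Fourier Analysis in Convex Geometry}): for an infinitely smooth origin-symmetric convex body $K\subset\mathbb{R}^n$ and every non-negative even integer $j$,
\[
\frac{\partial^{j}}{\partial t^{j}}A_K(\xi,t)\Big|_{t=0}
 = c_{n,j}\,\bigl(\|\cdot\|_K^{-n+1+j}\bigr)^{\wedge}(\xi),\qquad\xi\in S^{n-1},
\]
with a non-zero constant $c_{n,j}$, where the Fourier transform is taken in the sense of tempered distributions (understood via analytic continuation in the parameter when $-n+1+j\ge 0$). Specialising to $j=k$ and $j=k+2$, the two hypotheses translate into the vanishing of the distributions $(\|\cdot\|_K^{k-n+1})^{\wedge}$ and $(\|\cdot\|_K^{k-n+3})^{\wedge}$ on $S^{n-1}$. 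These are homogeneous distributions of degrees $-k-1$ and $-k-3$, so their vanishing on the unit sphere forces them to be supported at the origin. A homogeneous distribution supported at the origin is a linear combination of derivatives of $\delta$; matching degrees, only $\partial^\alpha\delta$ with $|\alpha|=k-n+1$, respectively $|\alpha|=k-n+3$, can occur. Inverting the Fourier transform then shows that
\[
P:=\|\cdot\|_K^{p}\quad\text{and}\quad Q:=\|\cdot\|_K^{p+2}
\]
are homogeneous polynomials in $x$, where $p:=k-n+1$. Observe that $p$ is even and $p\ge 2$, because $n$ is odd and $k$ is even with $k>n$.

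Next comes the algebraic endgame. By definition $P^{p+2}=\|\cdot\|_K^{p(p+2)}=Q^{p}$, so $P^{p+2}=Q^{p}$ holds as an identity in $\mathbb{C}[x_1,\dots,x_n]$. I would factor $P$ and $Q$ over the distinct irreducible factors, say $P=\prod_j P_j^{a_j}$ and $Q=\prod_j P_j^{b_j}$; unique factorization forces $p\,b_j=(p+2)\,a_j$ for every~$j$. Since $\gcd(p,p+2)=2$ (using that $p$ is even), there exist non-negative integers $c_j$ with $a_j=(p/2)c_j$ and $b_j=((p+2)/2)c_j$. Setting $S:=\prod_j P_j^{c_j}$ gives $P=S^{p/2}$ and $Q=S^{(p+2)/2}$, whence
\[
\|\cdot\|_K^{2}=Q/P=S
\]
is itself a polynomial, homogeneous of degree~$2$ and strictly positive on $\mathbb{R}^n\setminus\{0\}$, hence a positive definite quadratic form. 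Consequently $K=\{x:\|x\|_K\le 1\}$ is an ellipsoid.

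The hard part will be the first step: rigorously justifying Koldobsky's formula in the range $j\ge n-1$ that is relevant here, where $\|\cdot\|_K^{-n+1+j}$ is a positive-degree homogeneous function and its Fourier transform must be defined by analytic continuation of the family $q\mapsto(\|\cdot\|_K^{q})^{\wedge}$ through the poles of the accompanying Gamma factors. It is precisely the $C^{\infty}$-smoothness of $\partial K$ that makes this analytic continuation well behaved and that allows the pointwise vanishing of $A_K^{(k)}(\xi,0)$ on $S^{n-1}$ to be read off as vanishing of the corresponding distribution on $\mathbb{R}^n\setminus\{0\}$. Once this is secured, the remaining steps reduce to elementary algebra in the polynomial ring.
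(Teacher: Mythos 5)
Your proof is correct, and the first half (the reduction via Koldobsky's formula to the statement that $P=\|\cdot\|_K^{p}$ and $Q=\|\cdot\|_K^{p+2}$ are homogeneous polynomials, with $p=k-n+1$ even and $\ge 2$) is identical to the paper's. Where you genuinely diverge is in the algebraic endgame. The paper restricts $P$ and $Q$ to an arbitrary two-dimensional subspace $H$, dehomogenizes, runs a root-multiplicity argument on the resulting one-variable polynomials to deduce that $\|\cdot\|_{K\cap H}^2$ is a nondegenerate quadratic form (so $K\cap H$ is an ellipse), and then must invoke the Jordan--von Neumann characterization of inner-product norms to glue these planar conclusions into the statement that $K$ itself is an ellipsoid. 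You instead stay in $\mathbb{C}[x_1,\dots,x_n]$ throughout: from $P^{p+2}=Q^{p}$ and unique factorization you show (using $\gcd(p/2,(p+2)/2)=1$) that both $P$ and $Q$ are, up to constants, powers of a single polynomial $S$ of degree $2$, so $\|\cdot\|_K^2=Q/P=S$ is a homogeneous quadratic form, positive on $\mathbb{R}^n\setminus\{0\}$ and hence positive definite. This is cleaner: it removes the two-dimensional reduction and the appeal to Jordan--von Neumann entirely. One small point worth making explicit if you write this up: since $Q=RP$ with $P,Q$ real and $R$ a priori complex, conjugation gives $(R-\bar R)P=0$ and so $R=\|\cdot\|_K^2$ automatically has real coefficients; and the units absorbed into $S$ can be normalized so that $P=S^{p/2}$ and $Q=S^{(p+2)/2}$ hold exactly, not just up to constants.
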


\begin{proof}
	
	It is known (see \cite[Thm 3.18]{K}) that the derivatives of   $A_{K}(\xi,t)$ with respect to $t$ at $t=0$ can be expressed in terms of the Fourier transform of powers of the Minkowski functional of $K$. Namely, if $k \ge 0$ is an even integer, $k\ne n-1$, then
	\begin{equation}\label{A^(k)}
	\frac{\partial^k}{\partial t^k} A_K(\xi,t)\Big|_{t=0} =    \frac{(-1)^{k/2}}{\pi(n-k-1)} \Big(\Vert x\Vert_K^{-n+1+k}
	\Big)^\wedge (\xi),\qquad \forall \xi\in S^{n-1}.
	\end{equation}

	Using condition (\ref{der_k}) and homogeneity of the Fourier transform of $  \|x\|_K^{-n+1+k}$, we get $$\Big(\Vert x\Vert_K^{-n+1+k}
	\Big)^\wedge (\xi)=0,\qquad \forall \xi\in \mathbb R^n\setminus\{0\}.$$
	It is well-known fact that a distribution supported at the origin is a linear combination of derivatives of the delta function
	(see, for example, \cite[Thm. 6.25]{R}). Therefore,  the Fourier transform of $  \|x\|_K^{-n+1+k}$ is a finite linear combination of derivatives
	of the delta function, implying that $ \|x\|_K^{-n+1+k}$ is a
	polynomial. Denoting $m=-n+1+k$, we have $$\|x\|_K^{m}=P(x),$$ for some homogeneous polynomial $P$ of even degree $m$. Similarly, (\ref{der_k+2}) implies
	$$\|x\|_K^{m+2}=Q(x),$$ where $Q$ is  homogeneous polynomial  degree $m+2$.
	
	The latter two equations yield $(P(x))^{m+2} = (Q(x))^{m}$ for all $x$.  Now   consider any two-dimensional subspace $H$ of $\mathbb R^n$. The restrictions of $P$ and $Q$ to $H$ are again homogeneous polynomials of degrees $m$ and $m+2$ correspondingly. Abusing notation, we will denote these restrictions by $P(u,v)$ and $Q(u,v)$, where $(u,v)\in \mathbb R^2$. Thus we have $ (P(u,v))^{m+2} = (Q(u,v))^{m}$ for all $(u,v)\in \mathbb R^2$.  Since both $P$ and $Q$ are homogeneous, the latter is equivalent to
	$$ (P(u,1))^{m+2} = (Q(u,1))^{m}, \qquad \forall u\in \mathbb R.$$
	We have the equality of two polynomials of the real variable $u$,
	therefore these polynomials are equal for all $u \in \mathbb C$. Let $u_0$ be a complex root of $P(u,1)$ of multiplicity $\alpha\le m$. Then $u_0$ is also a root of $Q(u,1)$ of some multiplicity $\beta\le m+2$. Hence we have
	$$\alpha (m+2) = \beta m,$$
	$$\frac{\alpha}{\beta} = \frac{m}{m+2}.$$
	Recall that $m$ is even, say $m = 2 l$, $l \in \mathbb N$.
	Thus $$\frac{\alpha}{\beta} = \frac{l}{l+1}.$$
	Since $l$ and $l+1$ are co-prime, there are only two possibilities for $\alpha$ and $\beta$: either $\alpha=l$, $\beta = l+1$, or $\alpha = 2l$, $\beta = 2\l + 2$. The latter is impossible since it implies that
	$$ \|(u,v) \|_{L\cap H}^m = P(u,v) = c (u - v u_0)^m,$$
	for some constant $c$.
	So the remaining possibility is that $P(u,1)$ has two complex roots, say $a$ and $b$ of multiplicity $l$. Therefore,
	\begin{eqnarray*}\|(u,1) \|_{L\cap H}^m&=&P(u,1)\\ & =& c [(u -   a)(u -
		b)]^l = c [u^2 - (a+b) u   + ab ]^l \\ & =&  c [ u^{2l} - l(a+b) u ^{2l -1}
		+ \left( {l\choose 2} (a+b)^2+ lab\right) u^{2l -2} + \cdots].
	\end{eqnarray*}
	Since the restriction of this polynomial to $\mathbb R$ has real coefficients, it follows that $a+b$ and $ab$ are real numbers. Since $a$ and $b$ cannot be real, we conclude that they are complex conjugates of each other. Therefore,    $\|(u,v) \|_{K\cap H}^2 =  \bar c [u^2 - (a+b) u v  + ab v^2 ]$ is a nondegenerate quadratic form. Thus $L\cap H$ is an ellipse. Since every 2-dimensional central section of $L$ is an ellipse, $L$ has to be an ellipsoid. The latter is a consequence of the Jordan - von Neumann characterization of inner product spaces by the parallelogram equality;
	see \cite{JN}.
\end{proof}

   Proving this result for non-symmetric bodies  is more involved, so we will just provide a sketch of the proof.

\begin{theorem} \label{odd-dim}  Let $n$ be a positive odd integer, and let $K$ be an infinitely smooth convex body in
	$\mathbb R^n,$ containing the origin in its interior.
	Suppose there exists    $N\ge n$ such that for every integer $k\ge N$ and every $\xi\in S^{n-1}$  we have
	$$\frac{\partial^k}{\partial t^k} A_K(\xi,t)\Big|_{t=0}=0.$$
	Then   $K$ is an ellipsoid.
	
\end{theorem}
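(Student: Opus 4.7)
The plan is to extend the Fourier-analytic proof of Theorem~\ref{or-sym} to the non-symmetric setting. The strategy is to translate the vanishing of all high-order derivatives of $A_K(\xi,t)$ at $t=0$ into polynomial identities for the two gauges $f(x):=\|x\|_K$ and $g(x):=\|{-x}\|_K=\|x\|_{-K}$, and then to exploit these identities to force $K$ to be an ellipsoid.

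First I would derive an analogue of formula (\ref{A^(k)}) valid for a general star body with the origin in its interior. Decomposing $\chi_K=\chi_K^{\mathrm{even}}+\chi_K^{\mathrm{odd}}$ under $x\mapsto -x$, one sees that $\partial^k A_K/\partial t^k(\xi,0)$ picks up only the even part of $\chi_K$ when $k$ is even and only the odd part when $k$ is odd. Writing
\[
\chi_K^{\mathrm{even}}=\tfrac{1}{2}\bigl(\chi_{K\cap(-K)}+\chi_{K\cup(-K)}\bigr),
\]
a combination of indicators of two origin-symmetric star bodies whose Minkowski functionals are $\max(f,g)$ and $\min(f,g)$, applying (\ref{A^(k)}) to each, and using the elementary identity $\max(f,g)^{m}+\min(f,g)^{m}=f^{m}+g^{m}$, I obtain
\[
\frac{\partial^k A_K}{\partial t^k}(\xi,0)=\frac{(-1)^{k/2}}{2\pi(n-k-1)}\bigl(f^{m}+g^{m}\bigr)^{\wedge}(\xi),\qquad m=k-n+1,
\]
for every even $k\neq n-1$; an analogous computation with $\chi_K^{\mathrm{odd}}$ yields the companion formula involving $f^{m}-g^{m}$ for odd $k$. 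By the hypothesis the left-hand side vanishes for every $k\ge N$, so by homogeneity the Fourier transform of $f^{m}\pm g^{m}$ vanishes on all of $\mathbb{R}^n\setminus\{0\}$; since a tempered distribution supported at the origin is a finite sum of derivatives of $\delta_0$, both $f^{m}+g^{m}$ (for every large even $m$) and $f^{m}-g^{m}$ (for every large odd $m$) must be homogeneous polynomials. The identities $(f^{m}\pm g^{m})^{2}=f^{2m}+g^{2m}\pm 2(fg)^{m}$ then show that $(fg)^{m}$ is a polynomial for all large $m$, and since $fg$ is a priori the ratio of the two polynomials $(fg)^{m+1}$ and $(fg)^{m}$, the standard coprime-factorization argument in the UFD $\mathbb{C}[x_1,\dots,x_n]$ forces $fg=\|x\|_K\,\|{-x}\|_K$ itself to be a homogeneous polynomial of degree $2$, necessarily a positive-definite quadratic form $Q(x)$.

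The remaining step --- extracting the ellipsoid from these algebraic conditions --- is the main obstacle. On each two-dimensional subspace $H\subset\mathbb{R}^n$ through the origin the restrictions $f|_H$, $g|_H$ are positive $1$-homogeneous functions satisfying the binary relation $(f|_H)(g|_H)=Q|_H$, a positive-definite binary quadratic form, together with polynomiality of $(f^{m}\pm g^{m})|_H$ for all large $m$ of the appropriate parity. In the symmetric case of Theorem~\ref{or-sym} the ellipse structure of $K\cap H$ was read off from a single relation $(f|_H)^{m}=P$ by comparing the complex factorizations of $P$ and of the analogous polynomial for exponent $m+2$. Here $(f|_H)^{m}$ alone is not known to be polynomial, so the ellipse structure on $H$ must be extracted from the combined conditions $fg=Q$ and $(f^{m}\pm g^{m})$ polynomial; I expect this to be achieved by viewing $f|_H,g|_H$ as roots of the quadratic $T^{2}-s|_HT+Q|_H=0$ and using the two families of polynomial relations to force the discriminant $(f|_H-g|_H)^{2}=(s|_H)^{2}-4Q|_H$ to be the square of a linear form, which is exactly the form taken by an ellipse in $H$ with the origin in its interior. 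Once every central $2$-section of $K$ is shown to be an ellipse, the classical characterization of ellipsoids by the ellipticity of all such sections completes the proof.
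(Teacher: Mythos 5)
Your first two steps are sound and largely parallel the paper's argument. The paper reaches the same pair of conclusions — that $\|x\|_K^m+\|-x\|_K^m$ is a polynomial for large even $m$ and $\|x\|_K^m-\|-x\|_K^m$ is a polynomial for large odd $m$ — by directly quoting the formulas \eqref{A^(k-even)}--\eqref{A^(k-odd)} from \cite{RY} for non-symmetric bodies, rather than re-deriving them from the origin-symmetric formula \eqref{A^(k)} as you do. Your re-derivation via $\chi_K=\chi_K^{\mathrm{even}}+\chi_K^{\mathrm{odd}}$ and the identity $\chi_K+\chi_{-K}=\chi_{K\cap(-K)}+\chi_{K\cup(-K)}$ is a nice idea, but note that $K\cup(-K)$ is neither convex nor smooth, so applying a theorem stated for smooth convex (or at least smooth star) bodies to it requires some justification; in any case the target formula is correct as it is the one from \cite{RY}. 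Your observation that $(f^m\pm g^m)^2=f^{2m}+g^{2m}\pm 2(fg)^m$ forces $(fg)^m$ to be a polynomial for all large $m$, and hence by the coprime-power (UFD) argument that $fg$ itself is a positive-definite quadratic form, is correct and is a genuine reorganization: the paper does not explicitly extract $fg$ as a polynomial at this stage.

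The final step, however, is a real gap, and you acknowledge it yourself with ``I expect this to be achieved by\ldots''. Knowing that $fg=Q$ is a quadratic form and that $f^m+g^m$ (large even $m$) and $f^m-g^m$ (large odd $m$) are polynomials does \emph{not} obviously give control on the elementary symmetric function $s=f+g$: in the quadratic $T^2-sT+Q$ that $f$ and $g$ satisfy, $s$ is not yet known to be a polynomial (or even rational with controlled denominator), so one cannot directly compute or constrain the discriminant $(f-g)^2=s^2-4Q$. This is precisely the point where the paper brings in serious external input. Having solved the system
\[
\|x\|_K^{2s+1}-\|-x\|_K^{2s+1}=P_s,\qquad \|x\|_K^{4s+2}+\|-x\|_K^{4s+2}=Q_s,
\]
for all large $s$, to get
\[
\|x\|_K^{2s+1}=\tfrac12\bigl(P_s(x)+\sqrt{Q_s(x)}\bigr),
\]
the paper invokes Theorem~3.6 of \cite{KMY} to conclude $\|x\|_K=R(x)+\sqrt{S(x)}$ with $R$ linear and $S$ quadratic, from which the ellipsoid follows by squaring. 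That cited theorem is doing nontrivial algebraic work that your sketch does not replace. A clean way to finish along your lines would be to combine your derived identity $fg=Q$ with, say, $f^m-g^m=P$ for one large odd $m$: then $f^m$ satisfies $T^2-PT-Q^m=0$, so $f^m=\tfrac12\bigl(P+\sqrt{P^2+4Q^m}\bigr)$, and one is back to invoking \cite[Theorem~3.6]{KMY}. Without that (or an equivalent replacement), the passage from $fg=Q$ plus the polynomiality of $f^m\pm g^m$ to ``$(f-g)^2$ is a square of a linear form on each central $2$-section'' remains unproved.
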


\begin{proof}
	We will use an analog of formula (\ref{A^(k)}) for non-symmetric bodies  obtained in \cite{RY}. 	
	  If $k \ge 0$ is an even integer, $k\ne n-1$, then for every $\xi\in S^{n-1}$,
	\begin{equation}\label{A^(k-even)}
\frac{\partial^k}{\partial t^k} A_K(\xi,t)\Big|_{t=0} =    \frac{(-1)^{k/2}}{2\pi(n-k-1)} \Big(\Vert x\Vert_K^{-n+1+k}+\Vert - x\Vert_K^{-n+1+k}
	\Big)^\wedge (\xi),
	\end{equation}
	and if $k> 0$ is an odd integer, $k\ne n-1$, then
	\begin{equation}\label{A^(k-odd)}
	\frac{\partial^k}{\partial t^k} A_K(\xi,t)\Big|_{t=0} =    \frac{i(-1)^{(k-1)/2}}{2\pi(n-k-1)} \Big(\Vert x\Vert_K^{-n+1+k} - \Vert - x\Vert_K^{-n+1+k}
	\Big)^\wedge (\xi).
	\end{equation}
	
	Setting the Fourier transforms equal to zero in (\ref{A^(k-even)}) and (\ref{A^(k-odd)}), and arguing as in the proof of Theorem \ref{or-sym}, we get that 	
	  $\|x\|_K^{m}+\| - x\|_K^{m}$ is a polynomial for every even $m\ge N-n+1$, and  $\|x\|_K^{m} -\| - x\|_K^{m}$ is a polynomial for    every odd $m\ge N-n+1$.

	Thus for any integer $s\ge (N-n+1)/2$ we have

	$$\|x\|_K^{2s+1} - \|-x\|_K^{2s+1} = P(x)$$
	and
	$$\|x\|_K^{4s+2} + \|-x\|_K^{4s+2} = Q(x),$$
	where $P$ and $Q$ are homogeneous polynomials of degrees    $2s+1$ and  $4s+2$ respectively.

Solving the latter system of equations, we get, for every   $s\ge (N-n+1)/2$, that
	$$ \|x\|_K^{2s+1} = \frac12\left(P_s(x)+\sqrt{Q_s(x)}\right),$$
	where $P_s$ is an odd homogeneous polynomial of degree $2s+1$ and $Q_s$ is an even homogeneous polynomial of degree $4s+2$.

 Theorem 3.6 from \cite{KMY} allows to conclude   that the Minkowski functional of $K$ is of the form
	$$ \|x\|_K = R(x) + \sqrt{S(x)},$$
	where $R$ is a linear polynomial   and $S$ is a positive quadratic  polynomial. From this it is easy to see that $K$ is an ellipsoid. Indeed, if $x\in \partial K$, then $\|x\|_K=1$ and therefore
	$$ 1- R(x) = \sqrt{S(x)}.$$
	Squaring both sides, we get an equation of a quadric surface. Since $K$ is compact, this surface can only be the surface of an ellipsoid.	
\end{proof}

The methods used in this section also allow us to obtain an alternative proof of the   result obtained in \cite{Ag}, saying that there are no infinitely smooth polynomially integrable convex bodies in $\mathbb R^n$ for even $n$ (see Corollary \ref{cor}). We will prove a little more.

\begin{theorem} \label{even-dim}
	Let $n$ be a positive even integer. There is no infinitely smooth  convex body $K\subset \mathbb R^n$ containing the origin in its interior and satisfying
	\begin{equation}\label{m-der}\frac{\partial^m}{\partial t^m} A_K(\xi,t)\Big|_{t=0} = 0
	\end{equation}
	for some even $m\ge n$ and all $\xi\in S^{n-1}.$
\end{theorem}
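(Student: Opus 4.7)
The plan is to mimic the Fourier-analytic argument used for the odd-dimensional Theorem \ref{odd-dim} and Theorem \ref{or-sym}, but exploit a parity obstruction that appears only when $n$ is even. Suppose, for contradiction, that such an infinitely smooth convex body $K$ exists. Since $n$ is even and $m$ is even we have $m \neq n-1$, so formula (\ref{A^(k-even)}) applies with $k=m$: the hypothesis (\ref{m-der}) forces
\begin{equation*}
\Big(\|x\|_K^{-n+1+m} + \|-x\|_K^{-n+1+m}\Big)^{\wedge}(\xi) = 0 \quad \text{for every } \xi \in S^{n-1}.
\end{equation*}

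Next I would push this from the sphere to all of $\mathbb R^n\setminus\{0\}$ by homogeneity. The function $\|x\|_K^{-n+1+m} + \|-x\|_K^{-n+1+m}$ is continuous and homogeneous of degree $-n+1+m \ge 1$, so its Fourier transform is a tempered distribution homogeneous of degree $-1-m$ which is smooth away from the origin (since $K$ is $C^\infty$). Vanishing on $S^{n-1}$ therefore gives vanishing on all of $\mathbb R^n\setminus\{0\}$, so the Fourier transform is a distribution supported at the origin. As in the proof of Theorem \ref{or-sym}, such a distribution is a finite linear combination of derivatives of $\delta$, and inverting the Fourier transform yields
\begin{equation*}
\|x\|_K^{-n+1+m} + \|-x\|_K^{-n+1+m} = P(x)
\end{equation*}
for some polynomial $P$. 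By matching homogeneity, $P$ is a homogeneous polynomial of degree $m-n+1$.

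Now comes the parity obstruction, which is the step that has no analogue in odd dimensions. Because $n$ and $m$ are both even, the degree $m-n+1$ is odd, so the homogeneous polynomial $P$ satisfies $P(-x) = -P(x)$. On the other hand the left-hand side of the displayed identity is manifestly invariant under $x \mapsto -x$, i.e.\ even. Combining $P(-x) = P(x)$ with $P(-x) = -P(x)$ forces $P \equiv 0$, hence
\begin{equation*}
\|x\|_K^{-n+1+m} + \|-x\|_K^{-n+1+m} \equiv 0,
\end{equation*}
which is impossible since both summands are strictly positive for $x \neq 0$ (the origin is in the interior of $K$).

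I do not expect a real obstacle in this argument. The only point that needs careful justification is the passage from ``Fourier transform vanishes on $S^{n-1}$'' to ``Fourier transform vanishes on $\mathbb R^n\setminus\{0\}$,'' which rests on the smoothness of the homogeneous Fourier transform off the origin; this is standard and is handled exactly as in the proofs of Theorems \ref{or-sym} and \ref{odd-dim}. Corollary \ref{cor} (nonexistence of infinitely smooth polynomially integrable convex bodies in even dimensions) then follows immediately, since polynomial integrability forces $\partial_t^m A_K(\xi,0)=0$ for all sufficiently large $m$, in particular for some even $m \ge n$.
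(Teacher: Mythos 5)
Your proof is correct and is essentially identical to the paper's own argument: both apply formula \eqref{A^(k-even)}, deduce that $\|x\|_K^{-n+1+m}+\|-x\|_K^{-n+1+m}$ must be a homogeneous polynomial, and then derive the contradiction from the parity clash between the evenness of this function and the oddness of its degree $m-n+1$. The only cosmetic difference is that you spell out the homogeneity/smoothness justification for extending the vanishing of the Fourier transform from $S^{n-1}$ to $\mathbb R^n\setminus\{0\}$ slightly more explicitly.
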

\begin{proof}
Assume that there exists  an infinitely smooth   convex body  $K$ in $\mathbb R^n$
	satisfying  (\ref{m-der}) for some $m\ge n$. Let $m$ be even.
 Using (\ref{A^(k-even)}) we get
	$$
	\Big(\Vert x\Vert_K^{-n+1+m}+\Vert - x\Vert_K^{-n+1+m}
	\Big)^\wedge (\xi)=   { (-1)^{m/2}}{2\pi(n-m-1)}
\frac{\partial^m}{\partial t^m} A_K(\xi,t)\Big|_{t=0} = 0,
	$$
	for every $\xi\in S^{n-1}$.

	Thus the Fourier transform of $ f(x)= \|x\|_K^{-n+1+m}+\|-x\|_K^{-n+1+m}$ is zero outside of
	the origin, implying that $  f(x)$ can only be a
	polynomial. This polynomial has to be even, since the
	function $  f(x)$  is even. On the other hand, since
	$-n+1+m$ is an odd number, $  f(x)$  has to be an odd
	polynomial. Thus $f(x)$   is zero for all $x\in \mathbb
	R^n$, which is impossible.
\end{proof}

We have just proved that the section function $A_K(\xi,t)$ is never a polynomial with respect to $t$ in even dimensions. However, ellipsoids in even-dimensional spaces have the section function which in a sense is close to a polynomial, namely, this function differs from a polynomial by a simple factor. Indeed, if $K$ is an ellipsoid centered at the origin, then $A_K(\xi,t)=C(\xi)(h^2_K(\xi)-t^2)^{\frac{n-1}{2}}$ where $h_K(\xi)$ is the support function. It follows that if $n$ is even then
$A_K(\xi,t)$ can be represented in two ways:
$$A_K(\xi,t)=C(\xi) \sqrt{ h_K^2(\xi)-t^2}\ P(\xi,t)=C(\xi)\frac{ P_1(\xi,t)}{ \sqrt{h_K^2(\xi)-t^2} } ,$$
where $P(\xi,t), \ P_1(\xi,t)$ are polynomials in $t.$ It was proved in \cite{AKRY} that such a presentation of the section function characterizes ellipsoids in even-dimensional spaces.

Denote by $\mathcal{H}$ the Hilbert transform
\begin{equation}\label{E:H}
\mathcal{H} f(t)= \frac{1}{\pi} p.v. \int\limits_{\mathbb R}  \frac{f(s)}{t-s} ds
\end{equation}
of a continuous function $f$ with sufficiently fast decay at infinity.

The main result of the article \cite{AKRY} is as follows.
\begin{theorem} \label{T:Main-even} Let $n$ be an even positive integer. Let $K$ be a bounded convex domain in $\mathbb R^n$ with $C^{\infty}$ boundary $\partial K.$
The following are equivalent:
\begin{enumerate}[(i)]
\item The section function $A_K(\xi,t)$ has the form
$$A_K(\xi,t)=\sqrt{q(\xi,t)}\ P(\xi,t),$$
where $P(\xi,t), \ q(\xi,t)$ are {continuous in $\xi$} and polynomials in $t$ with $\deg q(\xi,\cdot) =2 ; \ q(\xi,t) > 0.$
\item The section function $A(\xi,t)$ has the form
$$A_K(\xi,t)=\frac{P(\xi,t)}{\sqrt{q(\xi,t)}},$$
where $P(\xi,t), \ q(\xi,t)$ are as in (i).
\item  For every fixed $\xi\in S^{n-1},$ the Hilbert transform of the function $t\to A_K(\xi,t)$ is a polynomial of $t.$
\item $K$ is an ellipsoid.
\end{enumerate}
The equalities for the section function appearing above hold for values of $t$ for which the hyperplane $\xi \cdot x=t$ meets $K.$ 


\end{theorem}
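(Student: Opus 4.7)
The plan is to establish the circle (iv) $\Rightarrow$ (i), (ii), (iii) by direct computation for ellipsoids, then to show (i) $\Leftrightarrow$ (ii) and (iii) $\Rightarrow$ (ii) by function-theoretic manipulation, and finally to reduce (i) $\Rightarrow$ (iv) to the Fourier-analytic machinery of Section \ref{S:S3}.

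For (iv) $\Rightarrow$ (i)--(iii), translate the ellipsoid to be centered at the origin, so $A_K(\xi, t) = c_n(h_K(\xi)^2 - t^2)^{(n-1)/2}$. Since $n$ is even, $(n-1)/2 = (n-2)/2 + 1/2$; factoring out one square root gives (i) with $q(\xi, t) = h_K(\xi)^2 - t^2$ and $P(\xi, t) = c_n (h_K(\xi)^2 - t^2)^{(n-2)/2}$, and (ii) follows with $P_1 = qP$. For (iii), expand $(h_K^2 - t^2)^{(n-2)/2}$ in Chebyshev polynomials $U_m(t/h_K)$ of the second kind and apply the classical Tricomi identity $\mathcal{H}\bigl(\sqrt{h^2 - s^2}\, U_m(s/h)\chi_{[-h,h]}\bigr)(t) = -h\, T_{m+1}(t/h)$ for $t \in (-h, h)$.

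For (i) $\Leftrightarrow$ (ii), the direction (i) $\Rightarrow$ (ii) is immediate. For the converse, the $C^\infty$ strict convexity of $\partial K$ gives the boundary asymptotic $A_K(\xi, t) \sim c(\xi)(h_K(\xi) - t)^{(n-1)/2}$ as $t \to h_K(\xi)^-$ with half-integer exponent (since $n$ is even); compatibility with the formula $A_K = P_1/\sqrt{q}$ forces $q$ to have its two real roots exactly at the endpoints of the support, hence to divide $P_1$. For (iii) $\Rightarrow$ (ii), let $[\alpha(\xi), \beta(\xi)]$ be the support of $A_K(\xi, \cdot)$ and consider the Cauchy integral $\Phi(\xi, z) = \pi^{-1}\int_\alpha^\beta(z - s)^{-1} A_K(\xi, s)\, ds$, analytic off $[\alpha, \beta]$. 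The Sokhotski--Plemelj formula gives $\Phi(\xi, t \pm i0) = Q(\xi, t) \mp i A_K(\xi, t)$ on $(\alpha, \beta)$, where $Q$ is the polynomial from (iii) extended to $\mathbb C$, so $\sqrt{(z - \alpha)(\beta - z)}\bigl(\Phi(\xi, z) - Q(\xi, z)\bigr)$ extends to an entire function of polynomial growth in $z$ and is therefore itself a polynomial. Solving for $A_K$ yields the form in (ii).

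The main step is (i) $\Rightarrow$ (iv). Translate $K$ so that $0 \in \operatorname{int}(K)$; then $A_K(\xi, t)^2 = q(\xi, t) P(\xi, t)^2$ is a polynomial in $t$ of bounded degree, so $\partial_t^k(A_K^2)(\xi, 0) = 0$ for all sufficiently large $k$. Expand via Leibniz and substitute the Fourier transform formulas (\ref{A^(k-even)}), (\ref{A^(k-odd)}) for each factor: on the Fourier side this becomes a family of convolution identities among $(\|\cdot\|_K^m \pm \|-\cdot\|_K^m)^\wedge$, which one uses to force $\|x\|_K^m + \|-x\|_K^m$ (even $m$) and $\|x\|_K^m - \|-x\|_K^m$ (odd $m$) to be polynomials in $x$ for all sufficiently large $m$. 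The argument of Theorem \ref{odd-dim}, via Theorem 3.6 of \cite{KMY}, then gives $\|x\|_K = R(x) + \sqrt{S(x)}$ with $R$ linear and $S$ positive definite quadratic, so $\partial K$, described by $(1 - R(x))^2 = S(x)$, is an ellipsoid. The hardest step will be decoupling the Leibniz-expanded identity: the products $\partial_t^j A_K(\xi, 0)\,\partial_t^{k-j}A_K(\xi, 0)$ mix even- and odd-order derivatives with complex phase factors $i$ from (\ref{A^(k-odd)}), and isolating the polynomial structure of each $\|\cdot\|_K^m \pm \|-\cdot\|_K^m$ separately requires careful parity bookkeeping.
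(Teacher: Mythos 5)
The survey does not actually prove Theorem \ref{T:Main-even}: it simply quotes the result from \cite{AKRY} (``The main result of the article \cite{AKRY} is as follows.'') and then moves on. So there is no ``paper's own proof'' to compare against, and I can only review your proposal on its merits.

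The implications (iv)\,$\Rightarrow$\,(i),(ii),(iii), (i)\,$\Leftrightarrow$\,(ii), and (iii)\,$\Rightarrow$\,(ii) are essentially sound. In (iv)\,$\Rightarrow$\,(iii) the Chebyshev/Tricomi identity and the rescaling are fine; in (iii)\,$\Rightarrow$\,(ii) the Sokhotski--Plemelj/entire-function argument works, provided you check that the endpoints $\alpha,\beta$ are removable singularities of $\sqrt{(z-\alpha)(z-\beta)}(\Phi-Q)$ (they are, since $A_K$ is $C^{0,1/2}$ with value $0$ there and $\Phi(\beta\pm i0)=Q(\beta)$) and that the degree and coefficients of the polynomial $Q$ in (iii) are uniformly bounded and continuous in $\xi$, which (iii) as stated does not explicitly give you. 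In (ii)\,$\Rightarrow$\,(i) the boundary asymptotic $A_K(\xi,t)\sim c(\xi)(\beta-t)^{(n-1)/2}$ holds only at points of nonvanishing Gaussian curvature, but these are dense, so your divisibility argument goes through by continuity.

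The direction (i)\,$\Rightarrow$\,(iv), however, has a genuine gap, and it is exactly at the step you flag as ``the hardest.'' From $A_K^2=qP^2$ polynomial of bounded degree you correctly get $\partial_t^k(A_K^2)(\xi,0)=0$ for large $k$, and the Leibniz expansion $\sum_j\binom{k}{j}\partial_t^jA_K(\xi,0)\,\partial_t^{k-j}A_K(\xi,0)=0$. Substituting \eqref{A^(k-even)} and \eqref{A^(k-odd)} does \emph{not} produce a convolution identity: it produces a quadratic (pointwise) relation on $S^{n-1}$ among the spherical restrictions of the Fourier transforms of the various $\|\cdot\|_K^{-n+1+m}\pm\|-\cdot\|_K^{-n+1+m}$. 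In Theorem \ref{odd-dim} the whole point is that \emph{one} such Fourier transform vanishes on $S^{n-1}$; by homogeneity it then vanishes on $\mathbb R^n\setminus\{0\}$, so the distribution is supported at the origin and the function is a polynomial. Here you only know a bilinear combination of these restricted Fourier transforms vanishes. Even after extending homogeneously (which is possible, since all terms $\widehat{g_j}\,\widehat{g_{k-j}}$ have the same homogeneity degree $-2-k$), the conclusion is merely that $\sum_j\binom{k}{j}\,g_j*g_{k-j}$ has Fourier transform supported at $0$, which says nothing about whether any individual $g_m$ or $u_m$ is a polynomial. There is no decoupling mechanism in the proposal: the parity bookkeeping you mention sorts the terms into even and odd blocks but does not isolate a single Fourier transform. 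Without a new idea here --- e.g.\ exploiting the explicit factorization $A_K^2 = q\,P^2$ with $q$ of degree two and roots at $\alpha(\xi),\beta(\xi)=\mp h_K(\mp\xi)$, or going through the Radon range/moment conditions as in Section \ref{S:S7}, or passing through (iii) and the even-dimensional back-projection inversion formula --- the step (i)\,$\Rightarrow$\,(iv) is not established.
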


Note that the appearance of the Hilbert transform in the latter theorem is not very surprising, since $A_K(\xi,t)$ is the Radon transform of the indicator of the body $K,$ and the Hilbert transform  is involved in the back-projection inversion formula for the Radon transform in even-dimensional spaces.

\bigskip

\section{Domains with algebraic Radon transform without real singularities } \label{S:S4}

In this section we consider classes of bodies with algebraic properties more general than polynomial integrability.
 These classes correspond to a choice of the form of the corresponding defining polynomial $\Psi$ in Definition \ref{D:algRT}.

Classes of algebraic functions are characterized by the form of the corresponding defining polynomial $\Psi$ in Definition \ref{D:algRT}. In the case of polynomially and rationally integrable domains (see Introduction), the defining polynomial $\Psi(\xi,t,w)$  is linear  with respect to $w$ and hence does not have multiple roots $w.$ Starting from this observation, we consider equations $Q(\xi,t,w)=0$  having only simple roots $w \in \mathbb C,$ for  any fixed $\xi \in S^{n-1}$ and for any real $t.$  This means that for any fixed $\xi,$ the algebraic function $w=w(t)$ has no real branching points. The set $Br_{\xi}$ of branching points is finite for every $\xi,$ but depends on $\xi.$ We will also assume that the union $\cup_{\xi \in S^{n-1}} Br_{\xi}$ of all branching points when $\xi$ runs over the unit sphere. Also it is a bounded set in the complex plane $\mathbb C,$ i.e., the branching points do not go to infinity when the normal vector $\xi$ runs over the sphere $S^{n-1}.$

We will formulate the conditions for the defining polynomial $\Psi(\xi,t,w)$ in terms of its discriminant with respect to the variable $w:$
$$D(\xi,t)=\psi_N(\xi,t)^{2N-1} \prod_{i<j}\big(w_i(\xi,t)-w_j(\xi,t)\big),$$
where $N=deg_{w} \Psi, \ \psi_N(\xi,t)$ is the leading coefficient of $\Psi(\xi,t,w)$ as a polynomial of $w,$ and $w=w_i(\xi,t)$ are the roots
of the algebraic equation $\Psi(\xi,t,w)=0.$ The discriminant is a polynomial of the coefficients of the polynomial $w \to \Psi(\xi,t,w)$ and since $\Psi(\xi,t,w)$ is a polynomial in $t,$ the function $D(\xi,t)$ is a polynomial with respect to $t.$

\begin{definition} \label{D:free} We say that the body $K$ has {\bf algebraic Radon transform without real singularities} if
the discriminant $D(\xi,t) \neq 0$ for all $\xi \in S^{n-1}$ and for all real $t,$ and also the leading coefficient of the polynomial $t \to D(\xi,t)$ does not vanish for all $\xi \in S^{n-1}.$ The latter  means that the null-set $\{ \xi \in S^{n-1}, \ z \in \mathbb C: D(\xi,z)=0 \}$ is a compact subset of $S^{n-1} \times \mathbb C.$
\end{definition}

Let us explain the relation of the condition $d(\xi) \neq 0, \ \xi \in S^{n-1}$ in Definition \ref{D:free} with the location of branching points of the algebraic function $t \to w(\xi,t).$ Since $\Psi(\xi,t,w)$ is continuous with respect to  $\xi,$ then
$D(\xi,t)$ and $d(\xi,t)$ are the same. Therefore if $d(\xi)$ does not vanish on $S^{n-1}$  then $|d(\xi)| \geq C > 0, \ |\xi|=1.$
Write
$$D(\xi,t)=d(\xi)t^M+d_1(\xi)t^{M-1}+ \cdots+ d_M(\xi),$$
where $t$ is complex. Then
$$|D(\xi,t)| \geq |t|^M \left(C -\frac{|d_1(\xi)|}{|t|} - \cdots - \frac{|d_M(\xi)|}{|t|^M} \right) .$$
Since the coefficients $d_1(\xi), \cdots , d_M(\xi)$ are continuous and hence bounded on $S^{n-1}$ then there exists $R>0$ such that $D(\xi,t) \neq 0$ for all
$\xi \in S^{n-1}$ and for all complex $t$ with $|t|>R.$

Now, let $|t_0| >  R.$ Then  $D(\xi, t_0) \neq 0$ for any  $\xi \in S^{n-1}.$ Fix such $\xi \in S^{n-1}$. Then, by definition of the discriminant, all the roots of the polynomial $w \to  Q(\xi,t,w)$ are simple when $t$ is close  to $t_0$   and, moreover, they are continuous functions of $t$ in a neighborhood of $t_0.$
 Therefore, $t_0$ is a regular point of the algebraic function $t \to w(\xi,t),$ i.e., $ t \notin Br_{\xi}.$ Thus, the union of the sets $Br_{\xi}, \ \xi \in S^{n-1},$
is contained in the disc $|t| \leq R$ and  is bounded. Therefore, all the branching points $(\xi,t)$ are contained in a compact subset of $S^{n-1} \times \mathbb C.$
\smallbreak
\noindent{\bf Examples.} If $A_K(\xi,t)$ is a polynomial in $t$ then the
 Radon transform of the characteristic function of any polynomially integrable body $K$ has no real singularities since if $A_K(\xi,t)=P(\xi,t)$ is a polynomial in $t$ then $D(\xi,t)=1.$ Another example is any rationally-integrable body $K$ with $A_K(\xi,t)=\frac{P(\xi,t)}{Q(\xi,t)}$ and $Q(\xi,t) \neq 0$ for $(\xi,t) \in S^{n-1} \times \mathbb R.$ In this case $D(\xi,t)=Q(\xi,t)$ satisfies the above condition.

\begin{theorem} [\cite{Ag1}]\label{T:Thm1} Let $n$ be an odd integer, and let $K$ be a body in $\mathbb R^n$ with $C^{\infty}$ boundary $\partial K.$ Suppose that the Radon transform $A_K(\xi,t)$ of $\chi_K$ is an algebraic function, free of real singularities (Definition \ref{D:free}). Then $\partial K$ is an ellipsoid.
There are no bodies satisfying all those properties if $n$ is even.
\end{theorem}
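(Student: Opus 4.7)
The plan is to upgrade $A_K(\xi,\cdot)$ to a real-analytic function of $t$ using the discriminant hypothesis, then split on the parity of $n$: for even $n$ derive a contradiction from the boundary asymptotics of $A_K$, and for odd $n$ transfer the algebraic structure to the Taylor expansion at $t=0$ and reduce to Theorem \ref{odd-dim} via the Fourier machinery of Section \ref{S:S3}.

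Fix $\xi \in S^{n-1}$ and let $N = \deg_w \Psi$. Since $D(\xi,t)\neq 0$ for every real $t$, the equation $\Psi(\xi,t,w)=0$ has $N$ distinct roots $w_1(\xi,t),\dots,w_N(\xi,t)$ at every real $t$, each real-analytic in $t$ by the implicit function theorem. The continuous function $A_K(\xi,\cdot)$ satisfies $\Psi(\xi,t,A_K(\xi,t))=0$ on $[-h_K(\xi),h_K(\xi)]$, so on the connected support interval it must coincide with a single branch $W(\xi,t)$; consequently $A_K(\xi,t)$ extends real-analytically across the endpoints, with $W(\xi,\pm h_K(\xi))=0$. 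For even $n$, pick a direction $\xi \in S^{n-1}$ for which the contact point of $\partial K$ with the supporting hyperplane $\{\langle x,\xi\rangle=h_K(\xi)\}$ has nondegenerate Gaussian curvature (such directions exist for every smooth compact body). The standard Laplace-type expansion of the tangent-cap volume then gives
$$
A_K(\xi,\, h_K(\xi)-\varepsilon) \;=\; c(\xi)\,\varepsilon^{(n-1)/2} \;+\; O\!\left(\varepsilon^{(n+1)/2}\right), \qquad \varepsilon\to 0^+,
$$
with $c(\xi)\neq 0$ determined by the curvature. For even $n$, $(n-1)/2$ is a half-integer, which is incompatible with the real-analytic vanishing of $W(\xi,\cdot)$ at $t=h_K(\xi)$ (which must occur to integer order). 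This rules out such bodies when $n$ is even.

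For odd $n$ the endpoint exponent $(n-1)/2$ is an integer, so no local obstruction appears and the argument must be global. Real-analyticity of $W(\xi,\cdot)$ at $0$ makes the Taylor coefficients $a_k(\xi)=(k!)^{-1}\partial_t^k A_K(\xi,0)$ continuous in $\xi$, and formulas \eqref{A^(k-even)}--\eqref{A^(k-odd)} identify them, up to explicit constants, with Fourier transforms of $\|x\|_K^{-n+1+k}\pm\|{-}x\|_K^{-n+1+k}$. Being algebraic, $W(\xi,\cdot)$ is holonomic, so the $a_k(\xi)$ satisfy a linear recurrence in $k$ whose coefficients are polynomial in $k$ and continuous in $\xi$. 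Applying the inverse Fourier transform turns this recurrence into a linear recurrence for the distributions $\|x\|_K^{m}\pm\|{-}x\|_K^{m}$. Combined with the vanishing $W(\xi,\pm h_K(\xi))=0$ and the support/moment constraints on the Radon transform of $\chi_K$, this should force $\|x\|_K^m\pm\|{-}x\|_K^m$ to be polynomials in $x$ for all sufficiently large $m$, matching exactly the hypotheses of Theorem \ref{odd-dim} and hence yielding that $K$ is an ellipsoid.

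The hardest step is this final reduction in the odd-$n$ case: algebraic branches without real singularities need not be polynomials (the prototype $W(t)=\sqrt{t^2+1}$ meets all the local hypotheses), so termination of the holonomic recurrence is not automatic. Ruling out non-polynomial branches requires genuine use of the Radon-transform pedigree of $A_K$---in particular, the precise integer order of vanishing $(n-1)/2$ at $t=\pm h_K(\xi)$ and the compactness of the $t$-support---in order to force the $\xi$-parametrized recurrence to collapse and thereby identify $W(\xi,\cdot)$ with a polynomial in $t$ of uniformly bounded degree, reducing the theorem to the already-established polynomially integrable case of Theorem \ref{odd-dim}.
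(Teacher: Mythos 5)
Your handling of the even-dimensional case is essentially the paper's: the real-analytic extension across the tangent hyperplanes, combined with the tangent-cap asymptotics $A_K(\xi,h_K(\xi)-\varepsilon)\sim c\,\varepsilon^{(n-1)/2}$ at Morse points (Lemma~\ref{L:bndry-ext}), rules out even $n$. That part is fine.

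The odd-dimensional reduction, however, has a genuine gap that you yourself acknowledge but do not close. You correctly observe that ``algebraic without real singularities'' does not imply ``polynomial'' (your example $\sqrt{t^2+1}$ makes this vivid), and that the holonomic recurrence for the Taylor coefficients $a_k(\xi)$ need not terminate. The claim that ``support/moment constraints \ldots should force $\|x\|_K^m \pm \|{-}x\|_K^m$ to be polynomials'' is exactly the statement one needs to prove, and the proposal stops there. Moreover, the step ``applying the inverse Fourier transform turns this recurrence into a linear recurrence for the distributions $\|x\|_K^m \pm \|{-}x\|_K^m$'' is dubious as stated: the recurrence coefficients depend on $\xi$ through the defining polynomial $\Psi$, and the Fourier transform on $\mathbb{R}^n$ does not convert a $\xi$-pointwise recurrence into a workable recurrence in $m$ for these distributions.

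The paper's actual mechanism for odd $n$ is different and is the real content of the theorem. First, Lemma~\ref{L:poly} uses the back-projection inversion formula $1 = c\int_{S^{n-1}} \partial_t^{n-1} A_K(\xi,\langle x,\xi\rangle)\,d\xi$ together with the Laplacian intertwining $\Delta \leftrightarrow \partial_t^2$ and the parity symmetry $A_K(\xi,t)=A_K(-\xi,-t)$ to prove that the spherical-harmonic Fourier coefficients $p_{k,\alpha}(t)$ of $\xi\mapsto A_K(\xi,t)$ are polynomials in $t$ of degree $\le k+n$ --- this is the ``Radon-transform pedigree'' you gesture at, and it is where the structure really enters. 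Second, the compactness built into Definition~\ref{D:free} (the branching set is contained in a fixed disc $|t|\le R$ uniformly in $\xi$) is used to choose a single contour $C_R^\pm$ and a continuous branch $w_\xi^{\pm}(t)$ on $S^{n-1}\times C_R^\pm$. Third, Cauchy's theorem applied to the polynomial Fourier coefficients gives $\int_{C_R^\pm} w_\xi^\pm(t)\,t^m\,dt=0$ for all $m$, so by vanishing of complex moments $w_\xi^\pm$ extends analytically to the interior of $C_R^\pm$; since there are no branch points outside $C_R$ this makes $A_K(\xi,\cdot)$ an entire function of $t$. Finally, an entire algebraic function is a polynomial (Picard), so $K$ is polynomially integrable and Theorem~\ref{odd-dim} finishes the job. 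None of this contour-integral, moment-vanishing, entire-hence-polynomial chain appears in your proposal; the holonomic-recurrence route you sketch would need this kind of global information supplied some other way, and as written it does not constitute a proof.
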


\begin{remark} \rm In \cite{Ag1}, Theorem \ref{T:Thm1} is formulated in terms of the cutoff function $V_K(\xi,t).$
\end{remark}

\subsection{Outline of the proof}

The idea of the proof is to show that $K$ is  polynomially integrable and to use the result of \cite{KMY}.

First, we notice that the condition that the polynomial $t \to \Psi(\xi,t,w)$ has no real multiple roots implies that for any fixed $\xi$ the algebraic function $w=w(t)$ defined by the equation $\Psi=0$ has no branching points for real $t.$ Hence, for every fixed $\xi \in S^{n-1},$ the function $t \to A_K(\xi,t)$ (where $t$ is in an open  interval such that $\{ \langle \xi, x \rangle =t \}$ meets $K$) represents a germ of a real-analytic branch of the algebraic function $w=w(t)$ and
extends to all $t \in \mathbb R$  as a real-analytic function. However, this situation is impossible when $n$ is odd, since the function $A_K(\xi,t)$ does not extend analytically through the tangent plane at boundary Morse points $a \in \partial K:$
\begin{lemma} \label{L:bndry-ext}
Let $a \in \partial K$ be a Morse point and let $\langle \xi_0,x \rangle =t_0$ be the tangent plane to $\partial K$ at the point $a.$
Then $A_K(\xi_0,t)= c (t-t_0)^{\frac{n-1}{2}} \big(1+o(1) \big),  \ t \to t_0, \ c \neq 0.$
\end{lemma}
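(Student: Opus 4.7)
The lemma is a purely local statement at the tangent point $a$, so the first step is to translate and rotate so that $a = 0$, $\xi_0 = e_n$, and $t_0 = 0$, and to write $\partial K$ near $a$ as the graph $x_n = f(x')$ of a smooth function with $f(0) = 0$, $\nabla f(0) = 0$, and $\mathrm{Hess}\, f(0)$ non-degenerate (the Morse hypothesis). Since this critical point of $L(x) = \langle \xi_0, x \rangle$ on $\partial K$ is isolated, there is a small neighbourhood $V$ of $a$ such that the hyperplane $\{x_n = t\}$ meets $\partial K \setminus V$ transversally for every $t$ close to $0$. Decomposing $A_K(\xi_0, t) = A_{\mathrm{loc}}(t) + A_{\mathrm{far}}(t)$ accordingly, the ``far'' piece is real-analytic at $t_0$ and the entire singular behaviour of $A_K(\xi_0, \cdot)$ is carried by $A_{\mathrm{loc}}$.

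Next, applying the Morse lemma, choose a smooth diffeomorphism $\phi$ fixing the origin with $f \circ \phi = Q$, where $Q$ is a non-degenerate quadratic form on $\mathbb{R}^{n-1}$. Pulling back,
\[
A_{\mathrm{loc}}(t) = \int_{\{y' \in W \,:\, \pm Q(y') \le \pm t\}} J(y')\,dy', \qquad J(y') = |\det D\phi(y')|,
\]
with $J$ smooth and $J(0) > 0$, the signs determined by which side of its graph the body lies on. In the principal case---which always occurs, for instance at a maximum or minimum of $L$ on the compact set $K$---$a$ is an extremum of $L|_{\partial K}$ and $Q$ is sign-definite. Rescaling $y' = \sqrt{|t-t_0|}\,z$ turns the integration region into the fixed Euclidean unit ball of $\mathbb{R}^{n-1}$, and expanding $J$ at the origin gives
\[
A_{\mathrm{loc}}(t) = \frac{J(0)\,|S^{n-2}|}{n-1}\,|t-t_0|^{(n-1)/2}\bigl(1 + O(|t-t_0|)\bigr)
\]
on the side of $t_0$ where the section is nonempty, and $A_{\mathrm{loc}}(t) \equiv 0$ on the other side. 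Since $J(0) \neq 0$, together with the analytic remainder this yields the asymptotic $A_K(\xi_0, t) = c(t-t_0)^{(n-1)/2}(1+o(1))$ with $c \neq 0$, and in particular shows that $A_K(\xi_0, \cdot)$ cannot continue as a real-analytic function through $t_0$.

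The main obstacle is the indefinite-signature case, where $A_{\mathrm{loc}}(t)$ is nonzero on both sides of $t_0$. There I would differentiate once in $t$ and invoke the Gelfand--Leray formula, expressing $A_{\mathrm{loc}}'(t)$ as $\int_{\{Q = t\}} J/|\nabla Q|\, d\sigma$ over a regular quadric level set. A standard asymptotic analysis of this integral as $t \to 0$ (in the spirit of the Picard--Lefschetz calculations recalled in Section~\ref{S:Vas}) identifies its leading singularity as $J(0)$ times an explicit nonzero integral over the unit quadric, possibly accompanied by a $\log|t-t_0|$ factor when $(n-1)/2$ is an integer. Antidifferentiating in $t$ then reproduces a leading non-analytic term of order $(t-t_0)^{(n-1)/2}$ with nonzero coefficient proportional to $J(0)$. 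In either case $A_K(\xi_0, \cdot)$ fails to be real-analytic at $t_0$, which is the conclusion ultimately needed in the proof of Theorem~\ref{T:Thm1}.
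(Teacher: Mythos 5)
Your argument in the definite-Hessian case is exactly the paper's: after a rotation write $\partial K$ near $a$ as the graph of $x_n = a_n + \sum_{j} \lambda_j^2 x_j^2 + o(|x'|^2)$ and observe that $A_K(\xi_0,t)$ is, to leading order, the $(n-1)$-dimensional volume of the solid ellipsoid $\sum_j \lambda_j^2 x_j^2 \le t-t_0$, which scales as $(t-t_0)^{(n-1)/2}$. Your version is more careful than the paper's one-line argument---you split off the real-analytic far contribution and invoke the Morse lemma to make the quadratic reduction exact rather than ``up to a small term of higher order''---but it is the same route and it is correct.

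The indefinite-signature discussion, however, is not just superfluous; it is trying to prove something false. If $a$ is a saddle of $\langle \xi_0,\cdot\rangle|_{\partial K}$, then $\{\langle \xi_0,x\rangle = t_0\}$ is not a supporting hyperplane: near $a$ the body lies on both sides of it, so $A_K(\xi_0,t_0) > 0$ and $A_K(\xi_0,t)$ tends to a strictly positive constant as $t \to t_0$. Thus $A_K(\xi_0,t) = c(t-t_0)^{(n-1)/2}(1+o(1))$ with $c \neq 0$ fails outright at a saddle; the singular term you would extract via Gelfand--Leray (with or without a logarithm) is a lower-order correction, not the leading asymptotic. The lemma, like the argument the paper gives for it, tacitly reads ``Morse point'' as a nondegenerate \emph{extremum} of $\langle \xi_0,\cdot\rangle$ on $\partial K$---equivalently a point where the tangent plane supports $K$---which is exactly what the model $\sum_j \lambda_j^2 x_j^2$ encodes, and as you yourself note such points always exist and are all that the application requires. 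You should simply restrict to that case rather than attempt to salvage the statement at a saddle.
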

\begin{corollary} [\cite{Ag}] \label{cor} Let $n$ be even. There is no polynomially integrable body $K \subset \mathbb R^n$ with infinitely smooth boundary.
\end{corollary}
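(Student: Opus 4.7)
The plan is to derive a contradiction directly from Lemma \ref{L:bndry-ext} by exploiting parity: a polynomial can vanish only to an integer order at a point, whereas the lemma forces the order of vanishing of $A_K(\xi_0,\cdot)$ at a tangent value $t_0$ to equal $(n-1)/2$, which is a half-integer when $n$ is even.

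First I would produce a Morse tangent configuration. Since $\partial K$ is a compact $C^\infty$ hypersurface, Sard's theorem applied to its Gauss map $G\colon\partial K\to S^{n-1}$ shows that the set of regular values of $G$ is dense in $S^{n-1}$. Because $dG$ is (up to sign) the second fundamental form, $\xi_0$ being a regular value is exactly the statement that every critical point of the height function $x\mapsto\langle x,\xi_0\rangle$ on $\partial K$ is a Morse boundary point in the sense of Lemma \ref{L:bndry-ext}. Pick such a $\xi_0$, let $a\in\partial K$ realize the maximum of this height function, and set $t_0=\langle a,\xi_0\rangle$. Then $\{\langle x,\xi_0\rangle=t_0\}$ is the tangent hyperplane to $\partial K$ at $a$, and the sections $K\cap\{\langle x,\xi_0\rangle=t\}$ are non-empty exactly on a one-sided neighborhood $(t_0-\varepsilon,t_0)$ of $t_0$.

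Applying Lemma \ref{L:bndry-ext} on this one-sided neighborhood yields
\[
A_K(\xi_0,t) \;=\; c\,(t-t_0)^{(n-1)/2}\bigl(1+o(1)\bigr), \qquad t\to t_0,\quad c\neq 0,
\]
so the order of vanishing of $A_K(\xi_0,\cdot)$ at $t_0$ from the side $(t_0-\varepsilon,t_0)$ equals $(n-1)/2$. Now suppose for contradiction that $K$ is polynomially integrable. Then on $(t_0-\varepsilon,t_0)$ we have $A_K(\xi_0,t)=P(t)$ for some polynomial $P$ in $t$. Continuity forces $P(t_0)=0$, so we may write $P(t)=(t-t_0)^k R(t)$ with $k\in\mathbb Z_{\geq 1}$ and $R(t_0)\neq 0$. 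Matching this against the displayed asymptotic (and using $c\neq 0$) would force the integer $k$ to equal $(n-1)/2$; for even $n$ this is a half-integer, giving the contradiction.

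The only mildly non-trivial step is producing the Morse tangent direction via Sard's theorem, which I expect to be the main thing to spell out; once it is secured, the corollary follows from a one-line comparison of local exponents.
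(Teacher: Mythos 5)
Your proof is correct and follows the same route as the paper: apply Lemma \ref{L:bndry-ext} at a Morse tangent configuration and observe that the forced vanishing order $(n-1)/2$ is a half-integer when $n$ is even, which no polynomial can match. You supply a slightly more detailed justification (Sard's theorem for the Gauss map) of the existence of a Morse tangent direction, where the paper simply notes that every smooth closed hypersurface has an open set of Morse points; the substance is the same.
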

Indeed, if $n$ is even then the exponent $\frac{n-1}{2}$ is fractional and since $\partial K$ contains an open set of Morse points, $A_K(\xi,t)$ cannot be a polynomial with respect to $t$ for all $\xi \in S^{n-1}.$  In Theorem \ref{even-dim} a different approach to this phenomenon is presented. Notice, that both Corollary \ref{cor} and Theorem \ref{even-dim}  are consonant with Theorem \ref{even} which states that there is no infinitely smooth algebraically integrable bodies in even-dimensional spaces. 

The proof of Lemma \ref{L:bndry-ext} immediately follows from the fact that, in a neighborhood of the Morse point $a$ the $C^{\infty}$ hypersurface  $\partial K$ can be represented (after a suitable rotation) as the graph of the function $x_n=a_n+ \sum\limits_{j=0}^{n-1} \lambda_j^2 x_j^2+ o(x_1^2+ \cdots +x_{n-1}^2).$ Then $\xi=(0,...,0,1),$ the tangent plane is given by $x_n=a_n=t_0$ and $A_K(\xi,t)$ is the volume of the cross-section $\{x_n=t\} \cap K.$ It is  equal, up to a small term of higher order, to the volume of the ellipsoid
$\sum\limits_{j=0}^{n-1}\lambda_j ^2 x_j^2 =t-t_0$ which is proportional to $(t-t_0)^{\frac{n-1}{2}}.$
Again, if $n$ is even then $\frac{n-1}{2}$ is non-integer and hence the function $A_K(\xi,a)$ is not real-analytic in $t$ near $t=t_0.$ It remains to notice that on any smooth closed hypersurface there is an open set of Morse points. This proves that if $n$ is even then for no body $K$ the conditions of Theorem \ref{T:Thm1} are fulfilled.

From now on, $n$ is odd. Applying a translation, we can assume that $0$ is an interior point of the body $K.$ Then for any $\xi \in S^{n-1}$ the function $t \to A_K(\xi,t)$ is well defined and real-analytic in an interval $|t|<\varepsilon.$  The key fact is that the Fourier coefficients of $A_K(\xi,t)$ with respect to $\xi$ are polynomials of the variable $t$:
\begin{lemma}\label{L:poly}  Let $n$ be odd. Let $A_K(\xi,t)=\sum\limits_{k=0}^{\infty}\sum\limits_{\alpha=1}^{d_k} p_{k,\alpha}(t)Y_k^{(\alpha)}(\xi)$ be the Fourier decomposition of the function $\xi \to A_K(\xi,t), \ |t| < \varepsilon$ on the unit sphere. Here $\{Y_k^{\alpha}\}_{\alpha=1}^{d_k}$ is an orthonormal basis in the space $H_k$ of all spherical harmonics of degree $k.$  Then the Fourier coefficients $p_{k,\alpha}(t)$ are polynomials and  $\deg p_{k,\alpha} \leq k+n.$
\end{lemma}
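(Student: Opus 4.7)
The plan is to Taylor-expand $A_K(\xi,t)$ at $t=0$, rewrite its Taylor coefficients as Fourier transforms of symmetrized homogeneous powers of the Minkowski functional via (\ref{A^(k-even)})--(\ref{A^(k-odd)}), and then apply the classical homogeneous-distribution formula for the Fourier transform of $r^pY_l(\omega)$ to read off the spherical harmonic components in $\xi$. The hypothesis that $A_K(\xi,\cdot)$ has no real singularities is what makes $t\mapsto A_K(\xi,t)$ real analytic near $t=0$, so that all Taylor coefficients $a_m(\xi):=\partial_t^m A_K(\xi,0)$ are well defined.

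Expanding
$$A_K(\xi,t)=\sum_{m\ge 0}\frac{a_m(\xi)}{m!}\,t^m,\qquad |t|<\varepsilon,$$
and integrating against $Y_k^{(\alpha)}$ on $S^{n-1}$ gives $p_{k,\alpha}(t)=\sum_m \tilde a_{m,k,\alpha}\,t^m/m!$ with $\tilde a_{m,k,\alpha}=\int_{S^{n-1}}a_m(\xi)\,Y_k^{(\alpha)}(\xi)\,d\xi$, reducing the lemma to the vanishing $\tilde a_{m,k,\alpha}=0$ for $m$ large. For $m\neq n-1$, formulas (\ref{A^(k-even)})--(\ref{A^(k-odd)}) yield a nonzero constant $C_{m,n}$ with
$$a_m(\xi)=C_{m,n}\bigl(\|x\|_K^{m-n+1}+(-1)^m\|-x\|_K^{m-n+1}\bigr)^{\wedge}(\xi),\quad\xi\in S^{n-1}.$$
Writing $p=m-n+1$ and $x=r\omega$, I expand the smooth function on $S^{n-1}$
$$\mu_K(\omega)^p+(-1)^m\mu_K(-\omega)^p=\sum_{l,\beta}b^{(m)}_{l,\beta}\,Y_l^{(\beta)}(\omega),\qquad \mu_K(\omega):=\|\omega\|_K;$$
since this combination is even (resp.\ odd) in $\omega$ when $m$ is even (resp.\ odd), one has $b^{(m)}_{l,\beta}=0$ unless $l\equiv m\pmod 2$.

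The core ingredient is the classical identity
$$\bigl(r^pY_l(\omega)\bigr)^{\wedge}(\xi)=c(n,l,p)\,|\xi|^{-n-p}\,Y_l(\xi/|\xi|),\qquad c(n,l,p)=i^{-l}\,2^{p+n}\pi^{n/2}\,\frac{\Gamma\bigl(\tfrac{l+p+n}{2}\bigr)}{\Gamma\bigl(\tfrac{l-p}{2}\bigr)},$$
valid as an identity of tempered distributions on $\xi\ne 0$; here $c(n,l,p)=0$ precisely when $(l-p)/2$ is a non-positive integer, equivalently when $l\le p$ and $l\equiv p\pmod 2$ (in which case $r^pY_l$ is a polynomial on $\mathbb R^n$ whose Fourier transform is supported at the origin). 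Plugging the spherical expansion of $f_m(x)=r^p(\mu_K(\omega)^p+(-1)^m\mu_K(-\omega)^p)$ term-by-term into this formula, restricting to $|\xi|=1$, and using orthonormality of the $Y_k^{(\alpha)}$ gives
$$\tilde a_{m,k,\alpha}=C_{m,n}\,c(n,k,m-n+1)\,b^{(m)}_{k,\alpha}.$$
Since $n$ is odd, $p=m-n+1\equiv m\pmod 2$, so the parity constraint already forces $b^{(m)}_{k,\alpha}=0$ unless $k\equiv p\pmod 2$, and when this parity holds $c(n,k,p)$ vanishes as soon as $k\le p$, i.e.\ whenever $m\ge k+n-1$. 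Thus $\tilde a_{m,k,\alpha}=0$ for every $m\ge k+n-1$ with $m\ne n-1$; the single residual value $m=n-1$ is only relevant when $k=0$, and there the weaker bound $\deg p_{0,\alpha}\le n$ claimed in the lemma already permits a nonzero coefficient. Therefore $p_{k,\alpha}(t)$ is a polynomial in $t$ of degree at most $k+n-2\le k+n$.

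The main technical obstacle is justifying the Fourier-transform identity for $r^pY_l$ across the full range of exponents $p=m-n+1$, in particular for large positive integer values where $r^pY_l$ merely grows polynomially: the identity must be interpreted as an equality of tempered distributions (with $c(n,l,p)$ obtained by analytic continuation in $p$), and one must check that the distributional restriction to $S^{n-1}$ coincides with the continuous function supplied by (\ref{A^(k-even)})--(\ref{A^(k-odd)}). Once this analytic-continuation step is settled, the combination of the parity obstruction with the poles of the Gamma function immediately delivers the claimed vanishing.
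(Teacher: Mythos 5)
Your argument is correct in its essentials but takes a genuinely different route from the paper's. The paper proves the lemma by applying the Laplacian to the back-projection inversion formula for the Radon transform, writing $B(\xi,t)=\partial_t^{n+1}A_K(\xi,t)$ as a power series in $t$, and matching homogeneous terms in $x$: since $\langle x,\xi\rangle^j$ restricted to $S^{n-1}$ spans spherical harmonics of degree $\le j$ of the same parity as $j$, each Taylor coefficient $b_j(\xi)$ of $B$ is orthogonal to those harmonics, and the symmetry $A_K(\xi,t)=A_K(-\xi,-t)$ together with $n+1$ being even forces $b_j$ to have parity $(-1)^j$, hence to be orthogonal to \emph{all} harmonics of degree $\le j$; integrating $n+1$ times then gives $\deg p_{k,\alpha}\le k+n$. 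You instead feed the Ryabogin--Yaskin formulas \eqref{A^(k-even)}--\eqref{A^(k-odd)}, which express $\partial_t^m A_K(\xi,0)$ through the Fourier transform of a symmetrized power of the Minkowski functional, into the classical Bochner identity for $\widehat{r^p Y_l}$, so that the vanishing of high Taylor coefficients drops out of the pole of $\Gamma\bigl((l-p)/2\bigr)$ combined with the same parity observation; this even yields the marginally sharper bound $\deg p_{k,\alpha}\le k+n-2$ for $k\ge 1$. Both routes rest on the same two ingredients --- the parity of $A_K$ and a degree restriction supplied by harmonic analysis on the sphere --- but package them quite differently. Two caveats deserve mention. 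First, your approach presupposes the Minkowski functional $\|x\|_K$, hence implicitly that $K$ is star-shaped about the origin, whereas the lemma is stated (and the paper's back-projection argument works) for an arbitrary body with $C^\infty$ boundary containing the origin in its interior. Second, as you note yourself, pushing the Fourier transform term by term through the spherical-harmonic expansion of $\mu_K(\omega)^p\pm\mu_K(-\omega)^p$ for arbitrarily large positive $p$ requires a careful tempered-distribution justification, including checking that the distributional restriction to $S^{n-1}$ matches the continuous function supplied by \eqref{A^(k-even)}--\eqref{A^(k-odd)}; the paper's route sidesteps this entirely by working with the manifestly convergent power series in $\langle x,\xi\rangle$.
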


\begin{proof}  Write the back-projection inversion formula for the Radon transform $A_K(\xi,t)=(\R \chi_K)(\xi,t)$ (\cite[Chapter 1, Theorem 3.1]{He}) in  odd-dimensional Euclidean spaces:
\begin{equation}\label{E:inversion}
1=\chi_K(x)=c \int_{|\xi|=1} \frac{d^{n-1}}{dt^{n-1}} A_K(\xi, \langle x, \xi\rangle  )d\xi,
\end{equation}
$d\xi$ is the normalized Lebesgue measure on $S^{n-1},$ when $x \in K.$
Applying the Laplace operator to the Radon transform results in differentiating twice in $t,$ (\cite[Chapter 1, Lemma 2.1]{He}), hence applying the Laplace operator to the both sides yields
$$\int_{|\xi|=1} B(\xi, \langle x, \xi \rangle)d\xi=0,$$
where we have denoted $B(\xi,t)=\frac{d^{n+1}}{dt^{n+1}}A_K(\xi, t ).$

The function $t \to B(\xi,t)$ is real analytic near $t=0:$
$$B(\xi,t)=\sum_{j=0}^{\infty}b_j(\xi)t^j, |t| <\varepsilon,$$
and hence
$$\sum\limits_{j=0}^{\infty} \int_{|\xi|=1} b_j(\xi)\langle x, \xi \rangle ^j d\xi=0$$
for $|x|<\varepsilon.$
Comparing homogeneous (in $x$) terms we get
$$\int_{|\xi|=1} b_j(\xi) \langle x, \xi \rangle ^j d\xi=0.$$

The functions $\mathbb R^n \ni \xi \to  \langle x, \xi \rangle ^{j},$ when $x$ runs over an open neighborhood of $x=0,$ span the space $\mathcal P_{j}$ of all homogeneous polynomials of degree $j.$     Since the restricted space $\mathcal P_{j} \vert_{S^{n-1}}= \bigoplus_{s=0}^{\frac{j}{2}} H_{j-2s},$ we conclude that each coefficient $b_j(\xi)$ is orthogonal on $S^{n-1}$ to all spherical harmonics of degree $\leq j,$  of the same parity with $j.$

Besides, $A_K(\xi,t)=A_K(-\xi,-t)$ implies that $a_j$ is even when $j$ is even and odd when $j$ is odd. This property is inherited by the functions $b_j(t)=a_j^{(n+1)}(t)$ because $n+1$ is even. Thus, $b_j$ is orthogonal to all spherical harmonics of the parity which is opposite to that of $j.$ Then $b_j$ is orthogonal on $S^{n-1}$ to all spherical harmonics $Y_k^{(\alpha)}$ of degree $1 \leq k \leq j,$ regardless of the parity of $j,$ and hence
$$b_j=\sum\limits_{k \geq j+1}\sum\limits_{\alpha=1}^{d_k} b_{j,k,\alpha} Y_k^{\alpha}.$$
Then
$$B(\xi,t)=\sum_{j} \sum\limits_{k \geq j+1}\sum_{\alpha=1}^{d_k} b_{j,k,\alpha} Y_k^{\alpha}(\xi)t^j
=\sum\limits_{k}\sum_{\alpha=1}^{d_k} q_{k,\alpha}(t)Y_k^{\alpha}(\xi),$$
where
$$q_{k,\alpha}(t)=\sum\limits_{j \leq k-1} b_{j,k,\alpha}t^j $$
is a polynomial of degree $\leq k-1.$
By the construction,  $q_{k,\alpha}(t)=p^{(n+1)}_{k,\alpha}(t)$ hence $p_{k,\alpha}(t)$ is also a polynomial, of degree $\leq k-1+n+1=k+n.$
\end{proof}

The condition for the discriminant of the polynomial $\Psi$ can be rephrased as follows: the projection of the set $\Psi(\xi,t,w)=\partial_w \Psi(\xi,t,w)=0$ on $t$ is a compact subset of $\mathbb C$ disjoint from the real axis. Therefore, if $R>0$ is sufficiently large, then the circle $C_R=\{|z|=R\}$  encloses all complex $t$ such that for some $\xi \in S^{n-1}$ the polynomial $w \to \Psi(\xi,t,w)$ has a multiple root.

Denote $C_R^{\pm}= [-R,R] \cup (C_R \cap \Pi^{\pm}),$ where $\Pi^{\pm}$ stands for,  correspondingly, upper and lower halfplanes. Since the fundamental group
of $S^{n-1} \times (C_R^+ \setminus 0)$ is trivial, and all $w$-zeros of $\Psi(\xi,t,w)=0, \ (\xi,t) \in S^{n-1} \times C_R^{+}$ are simple, Lemma about covering homotopy (\cite[Theorem 16.2]{Hu})   implies that there is a leaf $w^+_{\xi}(t)$ of the multi-valued algebraic function $w=w_{\xi}(t)$ defined by $\Psi(\xi,t,w),$ which is continuous on $S^{n-1} \times  (C_R^{+} \setminus 0)$ and coincides with $A_K(\xi,t)$ on $S^{n-1} \times \big( (-\varepsilon, \varepsilon) \setminus 0 \big)$ (and therefore is continuous on $S^{n-1} \times C_R^+).$

Fix a spherical harmonic $Y_k^{\alpha}.$ The functions $b^+_{k,\alpha}(t)=\int_{|\xi|=1} Y_k^{\alpha} w_{\xi}^+ (t)d\xi$ are real analytic in $t$ because $w^+_{\xi}(t)$ is real analytic by the construction. On $(-\varepsilon, \varepsilon),$ $b_{k,\alpha}^+(t)$ has Fourier coefficients which are polynomials  by Lemma \ref{L:poly}. Therefore, $b_{k,\alpha}^+(t)$ are polynomials and by Cauchy's theorem
$$\int_{C_R^+}b^+_{k,\alpha}(t)dt=0.$$ It yields
$$\int_{|\xi|=1} \int_{t \in C_R^+} Y_k^{\alpha}(\xi)w^+_{\xi}(t) t^m dt d\xi=0$$
for all $m=0,1, \cdots.$
Since the harmonic in the integral is arbitrary, we have
$$\int_{C_r^+}w_{\xi}^+(t)t^m dt=0.$$
The vanishing  complex moments imply that $w_{\xi}^+$ is the boundary value of a function, analytic inside the closed contour $C_R^+.$ Similarly, constructing an analytic extension $w_{\xi}^{-}(t)$ along  $S^{n-1} \times C_R^{-}$ in the lower half-plane yields that this extension  is a boundary value of an analytic function inside the contour $C_R^-.$ Since there is no ramification points outside of the circle $C_R,$ we conclude that $A_K(\xi,t)$ extends to $S^{n-1} \times \mathbb C$ as an entire function in $t.$ By the Great  Picard theorem, entire algebraic functions are polynomials and therefore $A_K(\xi,t)$ is a polynomial in $t,$ i.e., $K$ is polynomially integrable. Then Theorem 1 from \cite{KMY} implies that $K$ is  an ellipsoid.

\section{Local polynomial integrability } \label{S:S5}

\subsection{Polynomial integrability and finite stationary phase expansion}
Recall that the stationary phase method of asymptotic expansion of oscillatory integrals depending on a large parameter is based on the idea that that main contribution in the asymptotic is delivered by critical points of the phase function.

Generally, the expansion is presented as an infinite asymptotic series.  However,  such an expansion can be finite, i.e., have only finite number of nonzero terms. This phenomenon is related to the so called Hamiltonian maps and was studied in \cite{AB},  \cite{Be}.
A simplest example of finite asymptotic expansion is given by the oscillatory integral on the unit sphere:
$$I(\lambda)=\int_{S^{n-1}} e^{i\lambda x_n} dS(x),$$
where the dimension $n$ is odd.
Indeed, integration by parts yields
$$I(\lambda)=c \int_{-1}^{1} e^{i\lambda x_n} (1-x_n^2)^{\frac{n-3}{2}} dx_n= \sum_{x_n=\pm 1} e^{i\lambda x_n} Q_{x_n}(\frac{1}{\lambda}),$$
where $Q_{\pm 1}$ are polynomials.

Surprisingly, the polynomial integrability of a body $K$ appears equivalent to the finiteness of the stationary phase expansion for oscillatory Fourier integrals on $\partial K,$ with linear phases:

\begin{proposition} \cite{AgStat} \label{P:stat} Let $K$ be a convex body in $\mathbb R^n,$ with $C^{\infty}$ boundary. Then $K$ is polynomially integrable if and only if the family of oscillatory integrals $I_{\xi}(\lambda), \ |\xi|=1,$
$$I_{\xi}(\lambda)=\int_{\partial K} e^{ i \lambda \langle x, \xi \rangle } \langle \xi, n(x) \rangle dS(x),$$
where $n(x)$ is the unit outward normal vector,  and $dS$ is the area measure on $\partial K,$ possesses a finite stationary phase expansion of the form
\begin{equation} \label{E:I}
I_{\xi}(\lambda) =\sum_{\pm} e^{i\lambda b_{\pm}(\xi)}  Q_{\xi, \pm}(\frac{1}{\lambda}),
\end{equation}
where $  b_{-}(\xi)  = min_{x \in  K} \langle x, \xi \rangle, \   b_+(\xi)  = max_{x \in  K} \langle x, \xi \rangle.$
\end{proposition}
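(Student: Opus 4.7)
The plan is to reduce the surface integral $I_\xi(\lambda)$ to a one-dimensional Fourier-type integral of the section function $A_K(\xi,\cdot)$, and then identify finiteness of the boundary expansion with polynomiality of $A_K(\xi,t)$ in $t$. The preliminary reduction is the divergence theorem applied to the vector field $F(x)=e^{i\lambda\langle x,\xi\rangle}\xi$: since $|\xi|=1$, one has $\mathrm{div}\,F=i\lambda e^{i\lambda\langle x,\xi\rangle}$, so combined with Fubini
$$I_\xi(\lambda)=i\lambda\int_K e^{i\lambda\langle x,\xi\rangle}\,dx=i\lambda\int_{b_-(\xi)}^{b_+(\xi)} e^{i\lambda t}A_K(\xi,t)\,dt.$$
Thus (\ref{E:I}) is equivalent to
$$\int_{b_-(\xi)}^{b_+(\xi)} e^{i\lambda t}A_K(\xi,t)\,dt=\sum_{\pm}e^{i\lambda b_\pm(\xi)}\tilde Q_{\xi,\pm}(1/\lambda),$$
with polynomials $\tilde Q_{\xi,\pm}$ in $1/\lambda$ having no constant term (the absence of the constant follows from decay of the left-hand side as $|\lambda|\to\infty$, by Riemann--Lebesgue applied to the compactly supported $A_K(\xi,\cdot)$).

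For the direction $(\Rightarrow)$, if $A_K(\xi,t)=\sum_{k=0}^N a_k(\xi)t^k$ on $[b_-,b_+]$, then iterated integration by parts yields the closed form
$$\int_{b_-}^{b_+} e^{i\lambda t}t^k\,dt=\sum_{j=0}^k \frac{(-1)^j\,k!/(k-j)!}{(i\lambda)^{j+1}}\bigl(e^{i\lambda b_+}b_+^{k-j}-e^{i\lambda b_-}b_-^{k-j}\bigr),$$
and summing in $k$, then multiplying by $i\lambda$, gives (\ref{E:I}) with explicit coefficients $Q_{\xi,\pm}$.

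For the direction $(\Leftarrow)$, assume the displayed identity above and pick $M$ strictly larger than $\deg\tilde Q_{\xi,\pm}$. Provided $A_K(\xi,\cdot)$ is of class $C^M$ on $[b_-(\xi),b_+(\xi)]$, apply the general integration-by-parts identity
$$\int_{b_-}^{b_+} e^{i\lambda t}A_K(\xi,t)\,dt=\sum_{k=0}^{M-1}\frac{(-1)^k}{(i\lambda)^{k+1}}\bigl[e^{i\lambda t}A_K^{(k)}(\xi,t)\bigr]_{b_-}^{b_+}+\frac{(-1)^M}{(i\lambda)^M}\int_{b_-}^{b_+} e^{i\lambda t}A_K^{(M)}(\xi,t)\,dt.$$
Multiplying through by $(i\lambda)^M$, comparing with the analogous multiplication of the hypothesis, and subtracting, both sides become finite sums $e^{i\lambda b_+}(\cdot)+e^{i\lambda b_-}(\cdot)$ with polynomial coefficients of degree $<M$, and one obtains
$$\int_{b_-}^{b_+} e^{i\lambda t}A_K^{(M)}(\xi,t)\,dt=e^{i\lambda b_+}P_+(\lambda)+e^{i\lambda b_-}P_-(\lambda)$$
for some polynomials $P_\pm$ in $\lambda$. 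The left-hand side tends to $0$ as $|\lambda|\to\infty$ on $\mathbb R$ by Riemann--Lebesgue; since $b_+\neq b_-$, the characters $e^{i\lambda b_\pm}$ are linearly independent in the almost-periodic sense, and no nonzero polynomial-coefficient combination can decay (e.g.\ averaging the squared modulus over intervals $[T,T+1]$ as $T\to\infty$ forces the leading coefficients to vanish, then iterate). Hence $P_+\equiv P_-\equiv 0$, so the Fourier transform of $A_K^{(M)}(\xi,\cdot)\chi_{[b_-,b_+]}$ is identically zero and $A_K^{(M)}(\xi,\cdot)\equiv 0$ on $(b_-,b_+)$; that is, $A_K(\xi,\cdot)$ is a polynomial of degree $<M$ in $t$.

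The main obstacle I anticipate is justifying enough regularity of $A_K(\xi,\cdot)$ at the endpoints $b_\pm(\xi)$ to carry out the iterated integration by parts. Lemma \ref{L:bndry-ext} gives $A_K(\xi,t)\sim c(t-b_\pm)^{(n-1)/2}$ at a nondegenerate (Morse) critical point of $\langle\cdot,\xi\rangle$ on $\partial K$: for $n$ odd the section function extends to $C^\infty$ up to the endpoint and the argument runs cleanly; for $n$ even the fractional exponent obstructs the extension and prevents polynomiality altogether, which is consistent with Corollary \ref{cor}. Degenerate critical points form a nowhere dense set of directions on $S^{n-1}$, and the conclusion is transferred to all $\xi$ by continuity of $A_K(\xi,t)$ in $\xi$ and the fact that polynomials in $t$ with uniformly bounded degree and continuously varying coefficients form a closed class.
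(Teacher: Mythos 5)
Your proposal is essentially the paper's approach: the reduction $I_\xi(\lambda) = i\lambda\,\widehat{\chi_K}(\lambda\xi) = i\lambda\int_{b_-}^{b_+}e^{i\lambda t}A_K(\xi,t)\,dt$ is exactly what the paper obtains (the paper writes it via the Laplacian and Green's identity rather than the divergence theorem applied to $e^{i\lambda\langle x,\xi\rangle}\xi$, but these are the same computation), and the $(\Rightarrow)$ direction is the same explicit integration by parts of a polynomial. For the $(\Leftarrow)$ direction the paper says only ``the `only if' part is proved by a `reverse' reasoning,'' so you have supplied the content.

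One remark on the subtlety you flag: your $(\Leftarrow)$ argument carries out $M$-fold classical integration by parts, which requires $A_K(\xi,\cdot)\in C^M$ on the \emph{closed} interval $[b_-,b_+]$, and this is not known a priori --- it is close to what you are trying to prove. Your proposed resolution via Lemma \ref{L:bndry-ext} (Morse directions, $n$ odd, genericity plus continuity) does work, but it can be bypassed entirely by working distributionally: extend $A_K(\xi,\cdot)$ by zero to $\mathbb R$, so that $\int_{b_-}^{b_+}e^{i\lambda t}A_K(\xi,t)\,dt$ is the Fourier transform of a compactly supported continuous function. If $M>\deg\tilde Q_{\xi,\pm}$, then $(-i\lambda)^M$ times the hypothesized right-hand side is a finite linear combination of $\lambda^k e^{i\lambda b_\pm}$ with $k\ge 0$, i.e.\ the Fourier transform of a distribution supported on $\{b_-(\xi),b_+(\xi)\}$. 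Since this equals the Fourier transform of the compactly supported distribution $\partial_t^M\bigl(A_K(\xi,\cdot)\chi_{[b_-,b_+]}\bigr)$, that distributional derivative is supported on the two endpoints; restricting to the open interval gives $\partial_t^M A_K(\xi,\cdot)=0$ there, hence $A_K(\xi,\cdot)$ is a polynomial of degree $<M$. This avoids any appeal to endpoint regularity, genericity of Morse directions, or a continuity/density passage, and makes the converse unconditional.
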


\begin{proof}  By the Projection-Slice Theorem (\cite[Chapter 1, Section 2, Formula (4)]{He}) the Fourier transform of $\chi_K$ equals
$$ \widehat \chi_K(\lambda \xi)=\int_{b_{-}(\xi)}^{b_+(\xi)} e^{i \lambda t} A_K(\xi,t)dt.$$
If $A_K(\xi,t)$ is a polynomial in $t$ then integration by parts shows that $\widehat \chi_K (\lambda \xi)$ is represented in the form as in the right hand side in (\ref{E:I}).
On the other hand, by Stokes' formula:
$$
\begin{aligned}
&\lambda\, \widehat \chi_K(\lambda \xi)=- \lambda^{-1} \int_K \Delta e^{i \langle x, \lambda \xi \rangle} dx
= -\lambda^{-1} \int_{\partial K} \frac{\partial}{\partial n}e^{i \langle x, \lambda \xi \rangle}dS(x) \\
&=- i \int_{\partial K} e^{i \lambda \langle x, \xi \rangle} \langle \xi, n(x) \rangle dS(x)=-i  I_{\xi}(\lambda).
\end{aligned}
$$

It shows that $I_{\xi}(\lambda)$ has the required form (\ref{E:I}).  The "only if" part is proved by a ``reverse"  reasoning.
\end{proof}
\smallbreak
\begin{remark} \rm
This Proposition gives another argument why there are no polynomially integrable bodies in even dimension.
Indeed, if $\xi$ is such that at least one of the points $b_{\pm}(\xi)$ with the normal vector $\pm \xi$ has nonzero Gaussian curvature then the leading term of the expansion is known to be $c \lambda ^{-\frac{n-1}{2}}$ and if $n$ is even then the expansion cannot have the form (\ref{E:I}) because the exponent
$\frac{n-1}{2}$ is non-integer.
\end{remark}

\subsection{Characterization of locally polynomially integrable surfaces}
The relation with stationary phase expansion can be exploited for the study of a local version of polynomial integrability property.
Given a smooth strictly convex hypersurface $M \subset \mathbb R^n,$ denote by $T_a(M)$ the tangent plane at the point $a \in M$ and by $\nu_a$ the unit normal vector at $a$ directed ``inside" $M,$ i.e., $M$ lies in the half-space $T_a + \mathbb R_{+} \nu_a.$

\begin{definition} Let $M$ be a smooth strictly convex hypersurface in $\mathbb R^n.$ We say that $M$ is locally polynomially integrable if the $(n-1)$-dimensional volume
$A(a,t)$ of the $(n-1)$-dimensional domain in $T_a(M)+t\nu_a,$ bounded by $M \cap (T_a(M) + t\nu_a),$ where $t>0$ is sufficiently small, polynomially depends on $t.$
\end{definition}
\noindent{\bf Examples}. All strictly convex quadrics in $\mathbb R^{2k+1},$ i.e., ellipsoids, elliptic parabo\-loids,  single sheets of   two-sheet hyperboloids,  are locally polynomially integrable.
\smallbreak
The same argument as in the case of global polynomial integrability shows that there are no locally polynomially integrable surfaces in even dimensions.
\begin{conjecture}\label{conj} All smooth polynomially integrable hypersurfaces are (strictly convex) quadrics in $\mathbb R^{2k+1}.$
\end{conjecture}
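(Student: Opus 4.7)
The plan is to leverage Proposition \ref{P:stat}, localized to a neighborhood of each point of $M$, in order to translate local polynomial integrability into a rigidity statement that forces $M$ to be algebraic, and then to combine this with the section-function machinery of Sections 3 and 4 to conclude that $M$ must be a quadric. First, I would fix a point $a\in M$, normalize affine coordinates so that $a=0$, $T_a M=\{x_n=0\}$, $\nu_a=e_n$, and write $M$ locally as a graph $x_n=f(x')$ with $f\ge 0$, $f(0)=0$, and positive definite Hessian $H_f(0)$. Local polynomial integrability at $a$ is the statement that $A(a,t)=|\{f\le t\}|$ is a polynomial in $t$ for small $t>0$. Via the Morse lemma and the polar scaling $x'=\sqrt{t}\,y$, this translates to an extremely restrictive moment condition: all but finitely many of the spherical averages of the Taylor coefficients of the transported Jacobian must vanish. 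Using the argument in the proof of Proposition \ref{P:stat} applied with a smooth cutoff $\psi$ supported near $a$, this is in turn equivalent to the statement that the localized oscillatory integral
\[
I_\xi^{\psi}(\lambda)=\int_M \psi(x)\, e^{i\lambda \langle x,\xi\rangle}\langle \xi, n(x)\rangle\, dS(x)
\]
admits a finite stationary-phase expansion, uniformly for $\xi$ in a neighborhood of $\nu_a$ on $S^{n-1}$.

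Second, I would exploit this uniformity in $\xi$. Tilting $\xi$ near $\nu_a$ corresponds to tilting the cutting hyperplane, and asking that the stationary-phase expansion of $I_\xi^{\psi}$ truncate in every such direction is an algebraic constraint of the type studied in the theory of Hamiltonian maps (\cite{AB}, \cite{Be}). Such hypersurface--phase pairs are known to be very rigid; the conclusion I would aim for is that, near every point $a$, the hypersurface $M$ coincides to infinite order with a real-algebraic hypersurface $V_a$, and moreover that the degree of $V_a$ can be bounded by a constant depending only on the degree $N$ of the polynomial $A(a,\cdot)$. Since $M$ is connected and the coefficients $c_j(a)$ in $A(a,t)=\sum_{j=k}^N c_j(a)t^j$ (with $k=(n-1)/2$) depend smoothly on $a$, the local polynomial degree $N$ is locally constant and hence globally bounded. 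A jet-comparison argument at nearby points $a,a'\in M$ then forces the germs $V_a$ to glue into a single irreducible real-algebraic hypersurface containing $M$, so that $M$ itself is algebraic and in particular real analytic.

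Third, once algebraicity is in hand, I would bring in the methods of Section 4 to pass from algebraicity to the quadric conclusion. In the compact case (ellipsoid candidate), $M=\partial K$ for a convex body $K$, and local polynomial integrability combined with algebraicity of $A_K(\xi,t)$ in $t$ (and the moment/Fourier-coefficient argument of Lemma \ref{L:poly}) upgrades the section function to a polynomial in $t$; then Theorem \ref{odd-dim} identifies $K$ as an ellipsoid. In the non-compact cases (paraboloid, sheet of a two-sheet hyperboloid) I would compactify projectively: $M$, being algebraic, extends to a projective hypersurface, and the local cap volumes at each point — taken for hyperplanes transverse to $M$ — assemble into a Radon-type algebraic datum to which the argument of Theorem \ref{T:Thm1} can be adapted, again forcing the defining polynomial to have degree two.

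The main obstacle is the second step: passing from "local polynomial integrability at every point $a$" (an infinite-jet condition at each $a$) to a genuine global algebraic structure with \emph{bounded} degree. The finite stationary-phase condition at a single $a$ controls only finitely many moments of the higher jets of $f$, and it is not automatic that the local algebraic degree is locally constant along $M$, nor that the local algebraic germs $V_a$ actually agree rather than merely osculate. The most promising route I see is to show, using variation of the Morse critical point along $M$ in the expansion of $I_\xi^{\psi}(\lambda)$, that the coefficients $c_j(a)$ are themselves algebraic functions of $a$ satisfying enough polynomial identities to pin down a single defining equation for $M$; this is where the hardest technical work lies, and where one must substantively go beyond the arguments of Sections 3--5.
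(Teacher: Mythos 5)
The statement you are trying to prove is labeled a \emph{conjecture} in the paper, and it is genuinely open: the paper itself only offers a partial result, Theorem~\ref{T:local}, which assumes in addition that $M$ is \emph{real analytic} and contains an elliptic point of contact of order $q>4$, and concludes only that $M$ is an elliptic paraboloid. The authors state explicitly that removing the real-analyticity hypothesis and the high-order-contact hypothesis is precisely what is missing for a full resolution. So any proposal that claims to settle the whole conjecture must contain a gap, and indeed yours does, in exactly the place you yourself flag.

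The central gap is in your second step, and it is not a technical nuisance but the heart of the problem. You invoke the ``rigidity'' of finite stationary-phase expansions via \cite{AB} and \cite{Be} to conclude that $M$ agrees to infinite order with a bounded-degree real-algebraic hypersurface $V_a$ near each point. But the Atiyah--Bott and Bernhard results concern exactness of stationary-phase for moment maps of torus actions (equivariant localization); they do not state, and it is not known, that a hypersurface whose localized phase integral $I^{\psi}_\xi(\lambda)$ has a finite expansion must be algebraic or even real analytic. Note also that the finite expansion in Lemma~\ref{L:local-stat} controls only a finite part of the infinite jet of $f$ at $a$ up to an $o(\lambda^{-m})$ remainder; passing from ``all but finitely many stationary-phase coefficients vanish at every $a$'' to ``$M$ has an algebraic Taylor series at every $a$, with uniformly bounded degree, and these algebraic germs glue'' is precisely the open problem. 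Your closing paragraph acknowledges this but leaves the ``most promising route'' as an unexecuted plan. By contrast, the paper's proof of Theorem~\ref{T:local} starts from real analyticity (so it never needs to produce analyticity from integrability), passes to Morse coordinates where the phase is exactly quadratic, and applies the iterated Schr\"odinger operator $\Box=i\lambda\,\partial_{\xi_n}-\Delta'$ together with the high-order-contact hypothesis to kill the essential part of the expansion and force the Morse change of variables to be affine; this is a different mechanism from your moment/jet argument and it genuinely needs both extra hypotheses.

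A second, secondary gap is in your third step. Lemma~\ref{L:poly} and Theorem~\ref{odd-dim} are about the Radon transform of the indicator of a compact convex body (the back-projection inversion formula is used globally), and Theorem~\ref{T:Thm1} likewise requires a body with $C^\infty$ boundary and a globally defined section function free of real singularities. Your $M$ is a priori a noncompact hypersurface (paraboloid, hyperboloid sheet), and the ``projective compactification plus Radon-type algebraic datum'' step is not something the cited results directly cover. Even in the compact case, deducing global polynomial integrability of $A_K(\xi,t)$ from local polynomial integrability at boundary points is not automatic, since the former concerns volumes of full slices while the latter concerns small caps near tangency.

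In short, your proposal correctly identifies the crux but does not bridge it; the statement remains a conjecture, and the gap you name (from local polynomial integrability at each point to global bounded-degree algebraicity) is exactly why.
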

Expressing polynomial integrability in terms of the stationary phase expansion appeared useful in proving results toward this Conjecture.
Lemma \ref{L:local-stat}, a local version of Proposition \ref{P:stat}, expresses the function $v(a,t)$ in terms of an asymptotic expansion of an oscillatory integral in a neighborhood of $a$
with a linear phase with the only critical point $a.$  Since the construction is local, we have to use a cut-off function. As well as in Proposition \ref{P:stat}, the
essential part of the expansion is finite, however the representation now is not exact due to the presence of a reminder of fast decay with respect to the large parameter $\lambda.$

For a fixed $a \in M$ consider the oscillatory integral
\begin{equation*}
I_{a}(\lambda):=\int\limits_M \frac{\partial}{\partial \nu_x} \Big[ e^{i \lambda  \langle x, \nu_a \rangle } \Big]
\rho_a(\langle x, \nu_x \rangle) dS(x),
\end{equation*}
where $\rho_a(u)$ is an infinitely smooth function of one variable,  supported in a sufficiently small interval $(-\varepsilon_a, \varepsilon_a), \ \rho_a(0)=1.$
The point $a$ is a critical point of the phase function and there are no other critical points if $\varepsilon$ is small.
\begin{lemma}\label{L:local-stat} The  volume function $A(a,t)$ is a polynomial in $t$ if and only if
\begin{equation}\label{E:local-stat}
I_{a}(\lambda)=c e^{i \langle a, \nu_a \rangle } Q_{a} \Big(\frac{1}{\lambda} \Big)+ o(\frac{1}{\lambda^m}), \lambda \to \infty,
\end{equation}
where $Q_a$ is a polynomial and $ m \in \mathbb N$ is arbitrary.
\end{lemma}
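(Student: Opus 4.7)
The plan is to reduce Lemma \ref{L:local-stat} to a one-dimensional Fourier-type identity, in close analogy with the global argument in Proposition \ref{P:stat}. The underlying idea is that the cap region enclosed by a small piece of $M$ near $a$ and the parallel hyperplane $T_a+\varepsilon\nu_a$ slices into the hyperplane level sets of the phase $\phi(x)=\langle x,\nu_a\rangle$, and on each slice the intrinsic volume is exactly $A(a,t)$.

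First, I would apply Stokes'/divergence theorem to $e^{i\lambda\langle x,\nu_a\rangle}$ on the cap region $U_\varepsilon$. Since $|\nu_a|=1$, one has $\Delta e^{i\lambda\langle x,\nu_a\rangle}=-\lambda^2 e^{i\lambda\langle x,\nu_a\rangle}$, and slicing by the affine hyperplanes $T_a+t\nu_a$ gives
$$\int_{U_\varepsilon} e^{i\lambda\langle x,\nu_a\rangle}\,dx = e^{i\lambda\langle a,\nu_a\rangle}\int_0^\varepsilon e^{i\lambda t}A(a,t)\,dt.$$
The boundary flux $\int_{\partial U_\varepsilon}\partial_n e^{i\lambda\langle x,\nu_a\rangle}\,dS$ splits into a contribution from the curved piece of $M$ (essentially $-i\lambda^{-1}\cdot I_a(\lambda)$ once $\rho_a$ is inserted), and a contribution from the flat top which is non-stationary in $\lambda$ and therefore $O(\lambda^{-\infty})$. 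Carefully inserting the cutoff $\rho_a$ and accounting for its derivatives, which are supported away from $a$ where the phase $\langle\cdot,\nu_a\rangle|_M$ has no critical points, yields an identity of the shape
$$I_a(\lambda) \;=\; c\,\lambda\, e^{i\lambda\langle a,\nu_a\rangle}\int_0^\varepsilon e^{i\lambda t}A(a,t)\psi(t)\,dt \;+\; R(\lambda), \qquad R(\lambda)=O(\lambda^{-\infty}),$$
with $\psi$ smooth, compactly supported in $[0,\varepsilon)$, and identically $1$ near $0$.

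Given this identity, the "only if" direction is routine: if $A(a,t)=\sum_{k=0}^Nc_k t^k$, then $(k+1)$-fold integration by parts terminates once the derivative $(A\psi)^{(N+1)}$ is supported in $\operatorname{supp}\psi'$, away from $t=0$ (the boundary at $t=\varepsilon$ contributes nothing as $\psi$ vanishes there). The surviving boundary terms at $t=0^+$ assemble into a polynomial in $1/\lambda$, while the remaining non-stationary integral is $O(\lambda^{-\infty})$, giving (\ref{E:local-stat}).

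The converse is the harder direction. Granting (\ref{E:local-stat}) and the identity above, $\int_0^\varepsilon e^{i\lambda t}A(a,t)\psi(t)\,dt$ admits an asymptotic expansion of the form $\lambda^{-1}Q_a(1/\lambda)+O(\lambda^{-\infty})$. Repeatedly integrating by parts produces the standard boundary-value expansion whose $k$-th coefficient is a fixed nonzero multiple of $(A\psi)^{(k)}(0)=A^{(k)}(a,0^+)$ (since $\psi\equiv 1$ near $0$). Uniqueness of asymptotic expansions then forces $A^{(k)}(a,0^+)=0$ for all $k$ exceeding $\deg Q_a$. The main obstacle is the last step: deducing that $A(a,\cdot)$ is honestly a polynomial on $(0,\varepsilon)$ from the vanishing of its high-order Taylor coefficients at $0$. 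In the setting of a smooth strictly convex $M$ with $n$ odd, $A(a,t)$ admits a genuine power series representation in $t$ near $0$ (coming from the Morse-type normal form at $a$) and is $C^\infty$ for $t\in(0,\varepsilon)$; one must argue that a potential flat-at-$0$ correction is incompatible with both this local power-series structure and the geometric regularity of $A(a,t)$, so the identified polynomial agrees with $A(a,t)$ throughout the interval. This is where the delicate analytic input is needed, and where a clean argument would complete the proof.
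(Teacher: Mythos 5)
The paper does not actually supply a proof of Lemma \ref{L:local-stat}: it is cited from \cite{AgStat} and presented as the local analogue of Proposition \ref{P:stat}, whose proof in the paper is a two-line application of the projection--slice theorem and Stokes' formula. Your reduction to the one-dimensional Fourier integral $\int_0^\varepsilon e^{i\lambda t}A(a,t)\psi(t)\,dt$ via the divergence theorem on the cap region is precisely the expected localization of that argument, and your bookkeeping of the cutoff $\psi$ (boundary term on the flat top killed by $\psi$ vanishing there; the $\nabla\psi$ bulk term giving $O(\lambda^{-\infty})$ because $\psi'A$ is smooth and compactly supported inside $(0,\varepsilon)$, so its Fourier transform is rapidly decaying) is correct. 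The "if" direction via repeated integration by parts is sound: the boundary contributions at $t=0$ assemble into a polynomial in $1/\lambda$, and the terminating remainder is $O(\lambda^{-\infty})$.

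The gap you flag at the end of the "only if" direction is genuine, and it is the correct thing to worry about. The asymptotic expansion of $\int_0^\varepsilon e^{i\lambda t}A(a,t)\psi(t)\,dt$ as $\lambda\to\infty$ is determined \emph{only} by the $\infty$-jet of $A(a,\cdot)$ at $t=0^+$; it cannot see what $A$ does on the interior of $(0,\varepsilon)$. Concretely, matching the finite expansion $\lambda^{-1}Q_a(1/\lambda)$ forces $A^{(k)}(a,0^+)=0$ for all $k>\deg Q_a$, but for a general $C^\infty$ hypersurface this does not preclude an $e^{-1/t}$-type flat correction, so polynomiality on $(0,\varepsilon)$ does not follow from the jet condition alone. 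Your suggested fix --- invoking a genuine power-series representation of $A(a,\cdot)$ near $t=0$ --- is indeed what closes the gap, but notice that this requires $M$ to be real-analytic (or at least $A(a,\cdot)$ to be), which is exactly the hypothesis imposed in Theorem \ref{T:local}. For a merely $C^\infty$ strictly convex $M$ the converse implication as literally stated is not a formal consequence of the asymptotics; the statement should either be read jet-wise or under the standing analyticity assumption used downstream. So: the architecture of your proof is the right one, the "if" half is complete, and the acknowledged hole in the "only if" half is a real one that needs the analyticity input rather than a soft smoothing argument.
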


Recall that a point $a \in M$ is {\it elliptic} if all the principal curvatures  at $a$ are positive. Near such a point, $M$ can be represented, after a suitable translation and rotation, as the graph $x_{n}=f(x_1,...,x_{n-1})$ where the second differential is of the form $d^2_af (h) = \sum_{j=1}^{n-1}\gamma_j h_j^2, \ \gamma_j>0.$
We say that $a$ is an {\it elliptic point of contact of order $ q$} if $d^3f_a=...=d^{q}f_a=0.$

The following theorem is a partial result towards Conjecture \ref{conj}.
\begin{theorem} \label{T:local}\cite{AgStat}  Let $M$ be a real analytic locally polynomially integrable  hypersurface in $\mathbb R^n, n=2k+1.$ Suppose that $M$ contains an elliptic point $a_0 \in M$
of contact of order $q > 4.$ Then $M$ is an elliptic paraboloid.
\end{theorem}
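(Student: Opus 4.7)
My plan is to translate local polynomial integrability, via Lemma \ref{L:local-stat}, into a system of algebraic constraints on the Taylor jets of a local defining function of $M$, and then use the high contact order at $a_0$ together with real analyticity to rule out every Taylor coefficient beyond the quadratic part.

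I would first normalize so that $a_0 = 0$, $T_{a_0}M = \{x_n = 0\}$, $\nu_{a_0} = e_n$, and $M$ is represented locally as the real-analytic graph $x_n = f(x')$ with $f(0) = 0$, $\nabla f(0) = 0$, $d^2 f_0$ positive definite, and $d^3 f_0 = \cdots = d^q f_0 = 0$. For every $a$ on $M$ close to $a_0$, the phase $x\mapsto\langle x,\nu_a\rangle$ restricted to $M$ has, after shrinking the support of the cutoff $\rho_a$, a unique nondegenerate elliptic critical point at $a$. Since $n = 2k+1$, the classical stationary-phase expansion
$$I_a(\lambda)\sim e^{i\lambda\langle a,\nu_a\rangle}\sum_{j\ge 0} c_j(a)\,\lambda^{1-k-j}$$
proceeds in integer powers of $1/\lambda$, with each $c_j(a)$ an explicit universal polynomial in the Taylor coefficients of $f$ at $a$ of orders $\le 2(j+1)$ and in the jets of the amplitude, normalized by $(\det d^2 f_a)^{-1/2}$. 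Lemma \ref{L:local-stat} forces this asymptotic to truncate modulo $o(\lambda^{-m})$ for every $m$, whence $c_j(a) \equiv 0$ for all $j$ exceeding a uniform $J$ and all $a$ near $a_0$ on $M$.

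Writing $f(x') = \tfrac{1}{2}\sum_{i}\gamma_i x_i^2 + R(x')$ with $R$ vanishing to some order $p_0\ge q+1\ge 6$, a direct Morse-coordinate computation shows that the first $c_j(a_0)$ sensitive to $R$ is linear in $d^{p_0}f_0$ and, up to a nonzero constant, equals a Gaussian moment $\int_{\mathbb{R}^{n-1}} P(x')\,e^{-\tfrac12\sum_{i}\gamma_i x_i^2}\,dx'$, where $P$ is a polynomial built linearly from the symmetric tensor $d^{p_0}f_0$. Differentiating $c_j(a) \equiv 0$ in $a$ along $M$ at $a_0$ produces many more linear equations on $d^{p_0}f_0$; positive-definiteness of $d^2 f_0$ makes the resulting Gaussian-moment pairing nondegenerate on symmetric $p_0$-tensors, so $d^{p_0}f_0 = 0$. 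Iterating, $d^r f_0 = 0$ for every $r\ge 3$; real analyticity then gives $f(x') = \tfrac12\sum_{i}\gamma_i x_i^2$ in a neighborhood of the origin, and analytic continuation along $M$ identifies it globally with the elliptic paraboloid $\{x_n = \tfrac12\sum_{i}\gamma_i x_i^2\}$.

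The main obstacle I expect is bookkeeping: one must isolate, at each stage of the induction, a family of coefficients $c_j(a)$ whose dependence on the currently leading nonvanishing jet of $f$ is uncorrupted by already-killed lower-order Taylor data, and then verify that the resulting Gaussian-moment conditions are collectively injective on the relevant space of symmetric tensors. The assumption $q>4$ is precisely what makes the base case transparent, since it guarantees $p_0 \ge 6$, eliminating any $d^3 f_0$ or $d^4 f_0$ contamination of the lowest nontrivial stationary-phase coefficient and giving the induction a clean start.
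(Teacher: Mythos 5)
Your proposal takes a genuinely different route from the paper, and the difference matters: the paper's proof is built around a mechanism that you have not replaced, and that omission leaves a real gap.

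The paper also works in the graph representation $x_n = f(x')$, but its first move is to pass to \emph{Morse coordinates} $u$ in which the phase becomes exactly quadratic, $f(u) = |u|^2$, pushing all nontrivial jet information of $M$ into the amplitude (the Jacobian of the Morse diffeomorphism). It then parametrizes $I_a(\lambda)$ by $\xi = \nu_a$ and applies the Schr\"odinger operator $\Box = i\lambda\,\partial_{\xi_n} - \Delta'$ to the identity of Lemma~\ref{L:local-stat}. The hypothesis $q > 4$ is used precisely to show that each application of $\Box$ strictly lowers the degree of the polynomial $Q_a$ and shortens the surviving part of the stationary-phase expansion; iterating $\Box^k$ therefore annihilates the entire expansion, not merely its tail. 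Since for a purely quadratic phase the stationary-phase coefficients are powers of the Laplacian applied to the density, one concludes that \emph{all} such powers vanish, which in turn forces the Morse diffeomorphism to be affine and hence $f$ to be a quadratic polynomial.

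Your argument lacks an analogue of this step, and that is where it breaks. Lemma~\ref{L:local-stat} only tells you that $c_j(a) \equiv 0$ for $j$ beyond the degree $J$ of the polynomial $Q_a$, and there is no a priori relation between $J$ and the vanishing order $p_0 \geq q+1 \geq 6$ of $R = f - \tfrac12\sum\gamma_i x_i^2$: the index $j$ at which $c_j(a_0)$ first depends on $d^{p_0}f_0$ is of size roughly $p_0/2$, which may well lie \emph{below} $J$. In that case the constraint you want, namely the vanishing of that coefficient, is simply not among the ones Lemma~\ref{L:local-stat} provides, and your Gaussian-moment injectivity step has nothing to act on. The paper's $\Box^k$ trick is exactly what closes this hole by upgrading ``the tail vanishes'' to ``the whole expansion vanishes.'' Separately, even granting the vanishing, your Gaussian-moment argument as stated fails when $p_0$ is odd: the moment $\int P(x')\,e^{-\frac12\sum\gamma_i x_i^2}dx'$ with $P$ built linearly from a symmetric $p_0$-tensor is then an integral of an odd polynomial against an even weight and vanishes identically, giving no information about $d^{p_0}f_0$. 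Your suggestion of differentiating $c_j(a)=0$ in $a$ to recover nondegenerate pairings is plausible as a heuristic, but you would need to exhibit the resulting linear system explicitly and prove it is injective on symmetric $p_0$-tensors, which is far from automatic; nothing in the proposal as written does this. Finally, you invoke $q>4$ only for cosmetic tidiness (``a clean start''), whereas in the actual proof it is a load-bearing hypothesis governing how $\Box$ interacts with the expansion.
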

Ellipsoids and two-sheet hyperboloids do not satisfy the condition about an elliptic point of a high order and hence are out of consideration in Theorem \ref{T:local}. Getting rid of this condition (as well as the condition of real analyticity) would allow to obtain a full description of locally polynomially hypersurfaces.

The proof is based on a variation of the expansion in Lemma \ref{L:local-stat} with respect to $a,$ in a neighborhood of the elliptic point $a_0.$ More precisely, we represent $M$ near the point $a_0$ as the graph $x_n=f(x_1,...,x_{n-1})$ and then pass to Morse coordinates $u$ in which $f$ has the form $f(u)=u_1^2+...+u^2_{n-1}.$ Then we come  up with an oscillatory integral with a quadratic phase function. We parametrize the  integral $I_a(\lambda)$ by the normal vector $\xi=\nu_a$ and then differentiate in $\xi,$ applying, at the point $a_0,$ to (\ref{E:local-stat}) the Schrodinger operator $\Box=i\lambda \frac{\partial}{\partial \xi_n} - \Delta^{\prime},$ where $\Delta^{\prime}$ is the Laplace operator  in $\xi_1, ..., \xi_{n-1}.$ The condition of the high order of tangency at $a_0$  yields that this operation results in reducing the degree of the polynomial $Q$ in (\ref{E:local-stat}) and shortening the length of the expansion. Then, by applying the iterated operator $\Box^{k}$ with sufficiently large $k,$ we annihilate the essential part of the expansion. Furthermore, the coefficients of the stationary phase expansion with quadratic phase are expressed as powers of the Laplace operator of the density function at the point $a$ and in our case all of them are zero. Being translated in terms of the Morse diffeomorphism $u=u(x),$ vanishing powers of the Laplace operator implies that the mapping $u(x)$ is affine. In turn, this implies that $f$ is  a quadratic polynomial and, correspondingly, $M$ is an elliptic paraboloid.

\section{Domains  with  algebraic $X$-ray transform} \label{S:S6}
As it was mentioned before, polynomial integrability never occurs in even dimensions. However, the Radon transforms of the indicator functions of ellipsoids in even dimensions are, in a sense, close to being polynomials. Namely, the sectional volume function $A_E(\xi,t),$ where $E$ is an ellipsoid, is a {\it square root of a polynomial} in $t.$  Thus, both in even and odd dimensions, $A^2_E(\xi,t)$ is a polynomial.

\begin{conjecture} \label{c61} Let $K$ be a body in $\mathbb R^n$ with $C^{\infty}$ boundary. Suppose that there exists $m \in \mathbb N$ such that $A_K^{m}(\xi,t)$ is a polynomial in $t.$ Then $\partial K$ is an ellipsoid and therefore we can take $m=1$ if $n$ is odd and $m=2$ if $n$ is even.
\end{conjecture}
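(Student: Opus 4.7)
The plan is to reduce Conjecture \ref{c61} to Theorems \ref{odd-dim} and \ref{T:Main-even} via an analytic-continuation argument modeled on the proof of Theorem \ref{T:Thm1}. Set $m_0=1$ when $n$ is odd and $m_0=2$ when $n$ is even. I will try to show, for each $\xi\in S^{n-1}$, that the ``residual'' polynomial $R(\xi,t)$ defined below is an $m$-th power of another polynomial in $t$; this forces $A_K(\xi,\cdot)$ to be polynomial in $t$ (when $n$ is odd) or of the form $\sqrt{q(\xi,t)}\,P(\xi,t)$ with $\deg_t q=2$ (when $n$ is even), and in both cases the cited theorems identify $K$ as an ellipsoid.

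A tangent-point analysis comes first. By Lemma \ref{L:bndry-ext}, $A_K(\xi,t)\sim c(t-t_0)^{(n-1)/2}$ at a Morse tangent point $t_0$, so $P(\xi,t):=A_K(\xi,t)^m$ vanishes there to order exactly $m(n-1)/2$. For $P$ to be a polynomial in $t$ this order must be a nonnegative integer, forcing $m_0\mid m$. Writing $t_\pm(\xi)=\pm h_K(\pm\xi)$ for the two tangent values along $\xi$, we extract the tangent factors,
$$P(\xi,t)=(h_K(\xi)-t)^{m(n-1)/2}\,(t+h_K(-\xi))^{m(n-1)/2}\,R(\xi,t),$$
where $R$ is a polynomial in $t$ with continuous coefficients on $S^{n-1}$ and $R>0$ on $[t_-,t_+]$. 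If $R=R_0^m$ for some polynomial $R_0$ with continuous $\xi$-dependence, then on the support slab
$$A_K(\xi,t)=[(h_K(\xi)-t)(t+h_K(-\xi))]^{(n-1)/2}\,R_0(\xi,t).$$
For odd $n$ the exponent is integer, hence $A_K(\xi,\cdot)$ is polynomial and Theorem \ref{odd-dim} applies. For even $n$, splitting $(n-1)/2=(n-2)/2+1/2$ yields $A_K=\sqrt{q}\,Q$ with $q(\xi,t)=(h_K-t)(t+h_K(-\xi))$ quadratic and $Q=[(h_K-t)(t+h_K(-\xi))]^{(n-2)/2}R_0$ polynomial, so Theorem \ref{T:Main-even} applies. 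The conjecture is therefore reduced to proving $R=R_0^m$.

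For the $R=R_0^m$ step I would mimic the contour-integration scheme from Theorem \ref{T:Thm1}. The algebraic equation $w^m=P(\xi,t)$ admits the real-analytic branch $w_\xi^+(t)=A_K(\xi,t)$ on $[t_-,t_+]$; when $n$ is odd the tangent multiplicities $m(n-1)/2$ are divisible by $m$, so $w_\xi^+$ extends real-analytically across $t_\pm$. Continue it along large semicircular contours $C_R^\pm$ in each half-plane enclosing every complex ramification point of $P$, and form
$$b_{k,\alpha}^+(t)=\int_{S^{n-1}}Y_k^\alpha(\xi)\,w_\xi^+(t)\,d\xi.$$
On a real neighborhood of $t=0$, $w_\xi^+=A_K$, so $b_{k,\alpha}^+$ agrees with the $(k,\alpha)$-Fourier coefficient of $A_K$, which by Lemma \ref{L:poly} is a polynomial in $t$ of degree $\leq k+n$. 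Analytic continuation then forces $b_{k,\alpha}^+$ to be this polynomial inside $C_R^\pm$, and Cauchy's theorem applied to $b_{k,\alpha}^+(t)\,t^\ell$ produces vanishing of all complex moments of $w_\xi^+$ on $C_R^\pm$; hence $w_\xi^+$ extends holomorphically inside $C_R^\pm$, and, being entire and algebraic, it is a polynomial by the Great Picard theorem. The even-$n$ case is analogous when applied to $A_K^2$, whose tangent multiplicities $n-1$ are already integer: one reduces $R=R_0^m$ to the condition that the residual factor of $A_K^2$ be an $(m/2)$-th power of a polynomial.

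The principal obstacle is the real-line extension step. Even when the tangent values $t_\pm$ are not branch points of $w=P^{1/m}$, the function $w_\xi^+$ may still ramify at any real zero of $R(\xi,\cdot)$ lying outside $[t_-,t_+]$ whose multiplicity is not divisible by $m$ (respectively $m/2$ for $n$ even); at such a point the real-analytic continuation along $\mathbb R$ breaks, and the Cauchy-contour step collapses because $w_\xi^+$ on the real diameter of $C_R^\pm$ is not single-valued. Nothing in the statement of Conjecture \ref{c61} immediately rules out such stray real zeros, and controlling them is the true crux of the problem. I expect that any complete proof will require additional input---for example, exploiting the compact support of $A_K$ in $t$ (so that the algebraic extension must have only controlled growth at infinity), invoking the Picard--Lefschetz monodromy constraints developed in Section \ref{S:S1}, or proving a version of Lemma \ref{L:poly} that tracks $R$ and $R_0$ directly rather than $A_K$.
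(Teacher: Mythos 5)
The statement you are attempting to prove is presented in the paper as Conjecture~\ref{c61}; the paper explicitly leaves it open and only confirms the special case $n=2$ with algebraic boundary (Theorem~\ref{T:ellipse}). There is therefore no ``paper's proof'' for your argument to be measured against, and a complete proof along your lines would be new. You are candid about this, and the obstruction you single out is correctly diagnosed: nothing in the hypothesis forbids $R(\xi,\cdot)$ from having a real zero outside $[t_-,t_+]$ whose multiplicity is not divisible by $m$; such a zero is a real branch point of $w=P^{1/m}$, which breaks the real-analytic continuation of $w_\xi^+$ along $\mathbb{R}$ and collapses the contour-moment argument. In Theorem~\ref{T:Thm1} exactly this possibility is excluded by fiat --- the ``no real singularities'' condition on the discriminant is there for precisely this purpose --- and Conjecture~\ref{c61} carries no such assumption, so that proof cannot simply be quoted.

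Even granting that obstacle, there are further gaps your sketch would have to close. First, the contour step rests on Lemma~\ref{L:poly}, which is stated and proved only for odd $n$ (its proof uses the local odd-dimensional back-projection inversion formula). You suggest running the same scheme on $A_K^2$ when $n$ is even, but no analogue of Lemma~\ref{L:poly} for $A_K^2$, or for $A_K$ in even dimensions, is supplied by the paper. Second, extracting the tangent factors $(h_K(\xi)-t)^{m(n-1)/2}(t+h_K(-\xi))^{m(n-1)/2}$ to \emph{exact} order relies on Lemma~\ref{L:bndry-ext}, which only applies at Morse points of the boundary; you would need a density-plus-continuity argument to show $R$ does not itself vanish at $t_\pm$ for the remaining $\xi$. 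Third, the theorems you want to land on (Theorem~\ref{odd-dim} and Theorem~\ref{T:Main-even}) assume $K$ convex, which Conjecture~\ref{c61} does not; you would need to add convexity as a hypothesis or derive it first. In short, your reduction is a sensible attack and correctly identifies $R=R_0^m$ as the heart of the matter, but it does not constitute a proof, and indeed could not, since the target is an open problem.
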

The following theorem confirms Conjecture  {\ref{c61}} for $n=2$ and for domains $K$ with algebraic boundary $\partial K$. This means that  $\partial K$ is contained in the zero set of a nonzero polynomial. In dimension 2 the function $A_K(\xi,t)$ is the $X$-ray transform of the characteristic function $\chi_K$ and evaluates the length of the chord
$K \cap (t\xi +\xi^{\perp}).$  This chord intersects $K$ of and only if $A_K(\xi,t) \neq 0.$

\begin{theorem}\cite{Ag2}\label{T:ellipse} Let $K$ be a domain in $\mathbb R^2$ with $C^{\infty}$ algebraic boundary. If the chord length function $A_K(\xi,t)$ has the form
$A_K(\xi,t)=\sqrt[m]{P_{\xi}(t)}$ (as long as $A_K(\xi,t) \neq 0$),  where $P_{\xi}$ is a polynomial, then $\partial K$ is an ellipse.
\end{theorem}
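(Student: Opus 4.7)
The plan is to reduce the problem to an application of Theorem \ref{T:Main-even} in dimension $n=2$: specifically, to show that $A_K(\xi,t)=\sqrt{q(\xi,t)}$ for a polynomial $q(\xi,\cdot)$ of degree two.

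First I would show that $m$ must be even. Since $\partial K$ is a bounded $C^\infty$ curve, it has Morse points (of non-zero curvature). At such a point $a\in\partial K$ with outward unit normal $\xi_0$ and $t_0=\langle a,\xi_0\rangle$, Lemma \ref{L:bndry-ext} specialised to $n=2$ gives $A_K(\xi_0,t)=c(t_0-t)^{1/2}(1+o(1))$ with $c\neq 0$ as $t\to t_0^-$. Raising to the $m$-th power and matching with the polynomial $P_{\xi_0}(t)=A_K^m(\xi_0,t)$ near $t_0$ forces $m/2\in\mathbb N$; write $m=2k$.

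Next, I would exploit the algebraicity of $\partial K$. Let $F(x,y)$ be an irreducible real polynomial vanishing on $\partial K$. For each direction $\xi$, the restriction $G_{\xi,t}(s):=F(t\xi+s\xi^\perp)$ is a polynomial in the transverse coordinate $s$ whose real roots give the intersections of the line $\langle x,\xi\rangle=t$ with $\partial K$. The analytic continuation of $A_K(\xi,t)$ in $t$ is therefore an algebraic function whose branch locus is contained in the zero set (in $t$) of the discriminant of $G_{\xi,t}$ in $s$, and the hypothesis $A_K^{2k}=P_\xi$ further confines these branch points to the zero set of $P_\xi$. At each real tangent value $h_\pm(\xi)$, the Morse asymptotic above yields a square-root branch of $A_K$, so $P_\xi$ must vanish to order exactly $k$ there. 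The key claim is that $h_-(\xi)$ and $h_+(\xi)$ exhaust the roots of $P_\xi$, so that
\begin{equation*}
P_\xi(t)=c(\xi)\bigl[(t-h_-(\xi))(h_+(\xi)-t)\bigr]^k
\end{equation*}
and hence
\begin{equation*}
A_K(\xi,t)=c(\xi)^{1/(2k)}\sqrt{(t-h_-(\xi))(h_+(\xi)-t)}=\sqrt{q(\xi,t)}
\end{equation*}
with $q(\xi,\cdot)$ quadratic in $t$ and continuous in $\xi$. Once this factorization is in hand, the representation of $A_K$ matches item (i) of Theorem \ref{T:Main-even} (with $P\equiv 1$), whose conclusion is that $\partial K$ is an ellipse.

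The technical heart of the argument is the claim just made: ruling out extra roots of $P_\xi$ beyond $h_\pm(\xi)$. Extra real roots could arise from additional real components of $\{F=0\}$ or from non-convex geometry producing more than two real chord endpoints, while extra complex roots would come from complex tangencies of the complexified line $\langle x,\xi\rangle=t$ to $\{F=0\}\subset\mathbb C^2$. The approach I would try is to combine Puiseux expansions of $A_K$ at each putative branch point with the continuous dependence of $P_\xi$ on $\xi\in S^1$ and the algebraic geometry of the dual curve of $\partial K$; the goal is to show that any extraneous branch point would force either a contradiction with $A_K^{2k}$ being the single-valued polynomial $P_\xi$, or a contradiction with the $C^\infty$-smoothness of $\partial K$.
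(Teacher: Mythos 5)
Your first step — using the Morse‑point asymptotic $A_K(\nu_a,t)\sim c\,(t_0-t)^{1/2}$ at tangent lines to force $m$ even and to deduce that $P_\xi$ vanishes to order $m/2$ at both support values $h_\pm(\xi)$ — is exactly the paper's lower‑bound argument, and your factorization $P_\xi(t)=c(\xi)\bigl[(t-h_-(\xi))(h_+(\xi)-t)\bigr]^{m/2}$ is what the paper also aims for. But the route you propose for the ``key claim'' (no extra roots of $P_\xi$) does not close the gap. Analyzing branch points of the algebraic continuation of $A_K$ cannot rule out extra complex roots of $P_\xi$ whose multiplicities are divisible by $m$: a conjugate pair of order $m$ each would make $A_K=P_\xi^{1/m}$ single‑valued (hence produce no new ramification) and would still be compatible with the Morse asymptotics at $h_\pm$. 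What the paper actually proves is a \emph{degree bound}: it constructs a path to $\infty$ in the complex $t$‑plane along which $A_K$ admits a single‑valued analytic continuation (using that $\partial K$ lies on a complex algebraic curve) and shows that the growth of this continuation is at most of order one; since $A_K=P_\xi^{1/m}$ grows like $t^{\deg P_\xi/m}$ at infinity, this gives $\deg P_\xi\le m$. Combined with the total vanishing order $m$ at $h_\pm$, this pins down $\deg P_\xi=m$ and forces your factorization. Your sketch (``Puiseux expansions $+$ dual curve $+$ continuity'') does not produce such a growth estimate and therefore doesn't complete the argument you yourself identify as ``the technical heart.''

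The endgame is a genuinely different route: you invoke Theorem~\ref{T:Main-even} (from \cite{AKRY}) with $P\equiv 1$ to conclude $\partial K$ is an ellipse, whereas the paper's proof proceeds self‑containedly from the range (moment) conditions for the $X$‑ray transform: the moments $M_s(\xi)=\int A_K(\xi,t)t^s\,dt$ must restrict homogeneous degree‑$s$ polynomials, and working out $s=0,1,2$ for $A_K=c(\xi)\sqrt{(h_+-t)(t-h_-)}$ yields, after a translation, that $h_K(\xi)^2$ is a quadratic form. Your endgame is valid if Theorem~\ref{T:Main-even} is available (note it also presupposes convexity of $K$, which both arguments are implicitly using), but it trades the elementary moment computation for an external result. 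Either way, the real gap in your proposal is in the middle step: you need the degree/growth bound, not just a branch‑point analysis.
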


Here is the sketch of the proof. The first step is to understand the degree of the polynomial $P_{\xi}$ for a generic direction $\xi,$ by establishing lower and upper bounds for it.

The lower bound relies on
the fact that at the Morse points $a \in \partial K$ with the unit normal $\nu_a$ the chord length function $A_K(\nu_a, t)$ behaves, near the tangent lines to the boundary, as $const \  (h_K^+(\nu_a)-t)^{\frac{1}{2}},$ where $h_K^{+}(\xi)=
\sup_{x \in K} \langle x , \xi \rangle $ is the support function. There are two tangent lines orthogonal to
a given unit vector $\xi,$ namely, $  \langle x , \xi \rangle =h_K^{+}(\xi)$ and $ \langle x , \xi \rangle  =h_K^{-}(\xi),$ where $h_K^{-}(\xi)=\inf_{x \in K}  \langle x , \xi \rangle =-h_K^+(-\xi).$ Therefore $P_{\xi}(t)=A_K^m(\xi,t)$ vanishes at the points $t_{\pm}=h^{\pm}_K(\pm \xi),$ to the order $\frac{m}{2}$ at each. Thus, $\deg \ P_{\xi} \geq m$ when $\xi$ is a normal vector at Morse points. Since Morse points are dense on the boundary, the estimate holds everywhere, by continuity.

The upper bound for $\deg \ P_{\xi}$ can be obtained from the estimate of the growth of $A_K (\xi,t)$ as $ t \to \infty.$ However, moving $t$ in the real direction does not help since when $t$ is so large as the line $t\xi+\xi^{\perp}$ does not meet $K$ then $A_K(\xi,t)=0$ and the relation between $A_K(\xi,t)$ and $P_{\xi}(t)$ is lost.
 The idea is to move $t$ to the infinity in a complex direction. To this end, we need to analytically extend the chord length function $A_K(\xi,t)$ into the complex plane. By the uniqueness theorem, this extension must coincide with the algebraic function $\sqrt[m]{P_{\xi}(t)}.$ The analytic extension of $A_K(\xi,t)$ relies on our assumption that $\partial K$ is algebraic and hence is a trace on $\mathbb R^2$ of a complex algebraic curve in $\mathbb C^2.$ Then we construct a path connecting a real $t$ and $\infty$ and providing a single-valued analytic extension $A_K(\xi,t)$ along it, and prove that the order of growth of $A_K(\xi,t)$ along this path is at most one. This results in the estimate $\deg P_{\xi}(t) \leq m.$ Thus, for generic $\xi$ the degree of the polynomial $P_{\xi}$ is exactly $m$ and therefore $P_{\xi}(t)$ has the only zeros $t_{\pm}=h_K^{\pm} (\xi),$ each of multiplicity $\frac{m}{2}.$
Therefore, we have $P_{\xi}(t)=c(\xi) \big(h_K^+(\xi)-t \big)^{\frac{m}{2}} \big(t- h_K^{-}(\xi) \big)^{\frac{m}{2}}.$

 The second step relies on using the range description of the $X$-ray transform $A_K(\xi,t)$ (see, e.g., \cite[Chapter 1, Theorem 2.4]{He}).
 It yields that the moment of the degree $s$:
 $$M_s(\xi)=\int_{\mathbb R} A_K(\xi,t)t^s dt=c(\xi) \int_{h_K^{-}(\xi)} ^{h_K^+(\xi)}  \big(h_K^+(\xi)-t \big) ^{\frac{1}{2}}
 \big (t-h_K^{-}(\xi) \big)^{\frac{1}{2}}t^s dt $$
 is the restriction to the circle $|\xi|=1$ of a homogeneous polynomial of the degree  $s.$
 It is not difficult to obtain from here, using only the moments $M_s(\xi)$ of the degrees $s=0,1,2,$ that, after a suitable translation of $K$, the support function $h_K^{+}(\xi)$ is the restriction to $|\xi|=1$ of the square root of a quadratic polynomial. This yields that $\partial K$ is an ellipse.

\begin{remark} \rm The obtained result can be interpreted as follows. While Newton's Lemma about ovals \cite{ArVas} states that the cutoff function $V_K^{\pm}(\xi,t)$ of any domain $K \subset \mathbb R^2$ (with $C^{\infty}$ boundary) is a transcendental function, Theorem \ref{T:ellipse} specifies that among those transcendental functions, the Abel integrals $\int \sqrt[m]{P_{\xi}(t)} dt$ ($P_{\xi}$ is a polynomial) detect ellipses.
\end{remark}

\section{Radon transforms supported in hypersurfaces} \label{S:S7}

\subsection{The Interior Problem in Tomography}
In odd dimensions the inversion formula for the Radon transform is local in the sense that the value at $x$ of the function $f$ can be computed from the restriction of  the Radon transform $\R f(L) = \int_L f \, dx$ ($dx$ denotes surface measure on $L$) to the set of hyperplanes $L$ that intersect an arbitrarily small neighborhood of the point $x$.  As is well known, this is not the case in even dimensions. On the contrary, the so-called ``Interior problem'' for the even-dimensional  Radon transform is not solvable. In the $2$-dimensional case this statement is usually understood to mean the following. Let $D$ be an open disk in the plane and let $K$ be a closed subset of $D$. Then a function $f$ supported in $\overline D$, the closure of $D$,  cannot be determined anywhere in $K$ from the knowledge of its integrals over all lines that intersect $K$.
The proof of this statement runs as follows; see e.g., \cite{N}. Choose coordinates so that $D$ is a unit disk centered at the origin and let $D_0$ be a smaller concentric disk   such that $K \subset D_0$. Then look for radial functions $f(r)$, $r = |x|$,  that are supported in $r \le 1$  and satisfy $\R  f(\xi, p) = \R  f(p) = g(p) = 0$ for $|p| \le p_0 <1$. This   leads to  an Abel integral equation in which $f(r)$ can be solved in terms of $g(p)$, and the support of the solution $f$ is in general equal to $\overline D$.
By an affine transformation we see that the same is  true if the disk $D$ is replaced by the region bounded by an ellipse. However, if $D$ is an arbitrary bounded, convex domain, this argument does not work, and as far as we know it is not known if the corresponding statement is true.

\begin{conjecture}   \label{intprobl}
Let $D$  be an open, bounded, convex domain in the plane and $K$ be a closed subset of $D$.
Then there exists a function $f\in C_c^{\infty}(\bar D)$, not identically zero in $K$, such that its Radon transform $\R  f(L)$ vanishes for all lines  $L$ that intersect $K$.
\end{conjecture}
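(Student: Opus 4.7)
I interpret $K$ as a compact subset of $D$ with $\delta:=d(K,\partial D)>0$, the natural analogue of the closed/compact setup in the disk example discussed in the preceding paragraph. The plan is to reduce the conjecture to the known disk case by exploiting affine covariance of the Radon transform: if $A$ is an invertible affine bijection of $\mathbb R^2$ and $f_A(x)=f(Ax)$, then $\R f_A$ equals, up to a positive Jacobian factor and an affine reparametrization of the line space, $\R f$ evaluated on the image lines. Consequently, both the support condition ``supported in $\overline D$'' and the vanishing condition ``$\R f(L)=0$ for every line $L$ meeting $K$'' transport bijectively under $A$.

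The first case I would treat is when there exists an open ellipse $E$ with $K\subset E$ and $\overline E\subset D$. An affine map $A$ sending $E$ to the unit disk reduces the problem to producing a function supported in the closed unit disk whose Radon transform vanishes on every line meeting the compact image $A(K)$ and which is not identically zero on $A(K)$. Since $A(K)$ lies strictly inside the open unit disk, one may enclose it in a concentric disk $B(0,\rho)$ with $\rho<1$ and then apply the classical radial Abel-equation construction sketched just before Conjecture \ref{intprobl}: this produces a radial function supported in the closed unit disk, not identically zero on $B(0,\rho)$, whose Radon transform vanishes on every line of distance at most $\rho$ from the origin, hence in particular on every line meeting $A(K)$. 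Pulling back by $A^{-1}$ yields the required $f\in C_c^\infty(\overline D)$. This case already covers many configurations: whenever $K$ lies in some Euclidean ball $B(x_0,r)$ with $\overline{B(x_0,r)}\subset D$, one simply takes $E=B(x_0,r)$.

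The main obstacle is that an inscribed ellipse need not exist. A sharp example is $D$ a triangle and $K$ a similar but only slightly smaller triangle: any ellipse containing $K$ must extend beyond the corners of $D$. To address the general case I would attempt a patching construction: cover $K$ by finitely many small open disks $B_1,\dots,B_N$ for which the previous inscribed-ellipse case applies to each piece $K_i:=K\cap\overline{B_i}$, obtaining $f_i\in C_c^\infty(\overline{B_i})$ whose Radon transform vanishes on every line meeting $K_i$ and with $f_i\not\equiv 0$ on $K_i$. The natural candidate $f=\sum_i f_i$ satisfies, for any line $L$ meeting $K_i$, the identity $\R f(L)=\sum_{j\ne i}\R f_j(L)$, so the obstruction reduces to cancelling these cross-terms. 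Killing them by adding a correction $g\in C_c^\infty(\overline D\setminus K)$ amounts to realizing a prescribed continuous datum on the family of lines meeting $K$ as the restriction of $\R g$ for some $g$ supported in the annular region $\overline D\setminus K$. This is a surjectivity question for the X-ray transform with support constraints, and I expect the hardest step of the proof to lie precisely here; a natural attack is a Hahn--Banach duality argument combined with the Helgason--Ludwig range conditions and microlocal analysis of the backprojection operator near $\partial K$, but carrying this through for arbitrary convex $D$ appears to be the essential open difficulty and is presumably why the statement is posed only as a conjecture.
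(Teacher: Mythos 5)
This statement is a \emph{conjecture} in the paper, not a theorem: the authors explicitly say that for a general bounded convex $D$ ``as far as we know it is not known if the corresponding statement is true.'' So there is no proof in the paper to compare against, and any complete argument you produced here would be a new result. Your write-up itself is candid about this: the first half (affine reduction to the known disk case, covering the situation where some ellipse $E$ satisfies $K\subset E$ and $\overline E\subset D$) is sound and reproduces exactly what the paper already records as known, but the patching step that you correctly identify as ``the essential open difficulty'' is a genuine gap, not a detail.

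The gap is worth naming precisely. After setting $f=\sum_i f_i$ you must correct the cross-terms $\sum_{j\ne i}\R f_j(L)$ on every line $L$ meeting $K$ by adding some $g$ supported in $\overline D$ away from $K$. This is a surjectivity question for the $X$-ray transform subject to a \emph{support constraint on the side of the function}, and that is exactly the kind of question the classical Helgason--Ludwig range theorem does not answer (it describes the range of $\R$ on all of $C_c^\infty$, with no information about where the preimage is supported). Worse, the paper itself supplies evidence that support-constrained prescriptions of this type are extremely rigid: Theorem~\ref{jbthm} shows that the most natural candidate construction --- seeking a compactly supported $f$ whose Radon transform is carried by the tangent lines to a smooth convex $\partial D_0$ --- is \emph{impossible} unless $D_0$ is an ellipse. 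Your correction $g$ is constrained differently (prescribed values on lines meeting $K$, rather than support on tangent lines), but it is the same genre of ``Radon transform with support control on both sides'' problem, and nothing in your sketch addresses why it should be solvable. Until that step is supplied, the argument does not close, which is consistent with the statement being posed as a conjecture rather than a theorem.
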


And what is the connection between the conjecture and Radon transforms supported in hypersurfaces?
Let us explain.
Denote by $f_0$   the function $\pi^{-1} (1 - |x|^2)_+^{-1/2}$, supported in the unit disk. An easy calculation shows that $\R  f_0(\xi, p) = 1$ for $|p| < 1$ and all $\xi$, and obviously $\R  f_0 = 0$ if $|p| > 1$. Hence the distribution $\partial_p \R  f_0$ is supported in the set of $(\xi, p)$ with $|p| = 1$, which corresponds to the set of lines that are tangent to the circle, and the same is true for $\partial_p^2 \R  f_0$.  Using the formula
\begin{equation}    \label{RDh}
(\R  \Delta h)(\xi, p) = \partial_p^2 \R  h(\xi, p)
\end{equation}
we can now conclude that the Radon transform of the distribution $f  = \Delta f_0$ is supported on the set of tangents to the unit circle ($\Delta$ is the Laplace operator). An affine transformation gives of  course similar examples where the disk is replaced by an elliptic region. A couple of years ago one of us (J.B.) got the idea to prove   Conjecture \ref{intprobl} by constructing analogous examples where $D$ is replaced by an  arbitrary convex region.

To be specific, assume for simplicity  that $D$ is symmetric, $D = -D$.
Choose an open, convex region $D_0$ with smooth boundary such that $D_0 = - D_0$,  $K \subset D_0$, and the closure $\overline D_0$ is contained in $D$.  Then try to find a distribution $f$, supported in $\overline D_0$, such that its Radon transform in the distributional sense, $\R  f = g$,  is supported  on the set of tangent lines  to the boundary $\partial D_0$.  A natural way to do this is to set
\begin{equation*}
g(\xi, p) = q(\xi) \delta  \big(p - h_{D_0}(\xi)\big) +  q(\xi) \delta  \big(p + h_{D_0}(\xi)\big) ,
\end{equation*}
where $h_{D_0}(\xi)$ is the support function for $D_0$,
\begin{equation*}
h_{D_0}(\xi) = \sup\{ \langle x , \xi \rangle ;\, x \in D_0\} , \quad \xi \in S^{1} ,
\end{equation*}
$\delta (\cdot)$ is the Dirac measure at the origin,
and try to choose the density function $q(\xi)$ so that $g$ is equal to $\R  f$ for some compactly supported distribution $f$. As is well known, the condition  for $g(\xi, p)$ to be in the range of $\R $ is that $g$ satisfies the moment conditions
\begin{align*} 
\begin{split}
 \xi = (\xi_1, \xi_2)  & \mapsto \int_{\mathbb R} g(\xi, p) p^k dp  \quad
 \textrm{is a homogeneous polynomial}  \\
& \textrm{in  $(\xi_1, \xi_2)$ of degree $k$ for every $k\ge 0$} .
\end{split}
\end{align*}
\noindent
\parbox[t]{10cm}{
The problem then becomes  to determine the function $q(\xi)$ on the circle, such that the range conditions are fulfilled. And when  $f$ is found, we could regularize $f$ by convolving it with a smooth function $\phi(x)$ supported in a  small neighborhood of the origin, $f_1 = \phi * f$. If the support of $\phi$ is sufficiently small, then the support of $f_1$ will be contained in $D$ and $\R  f_1(L) = \int_L f_1 dx$ will vanish for all lines that intersect $K$ as desired. This idea reduces the problem to a one-variable problem just as the rotational symmetry did in the problem with two concentric disks discussed earlier.
}
\parbox[t]{4cm}{
 \begin{center}
\includegraphics[width=.2\textwidth] {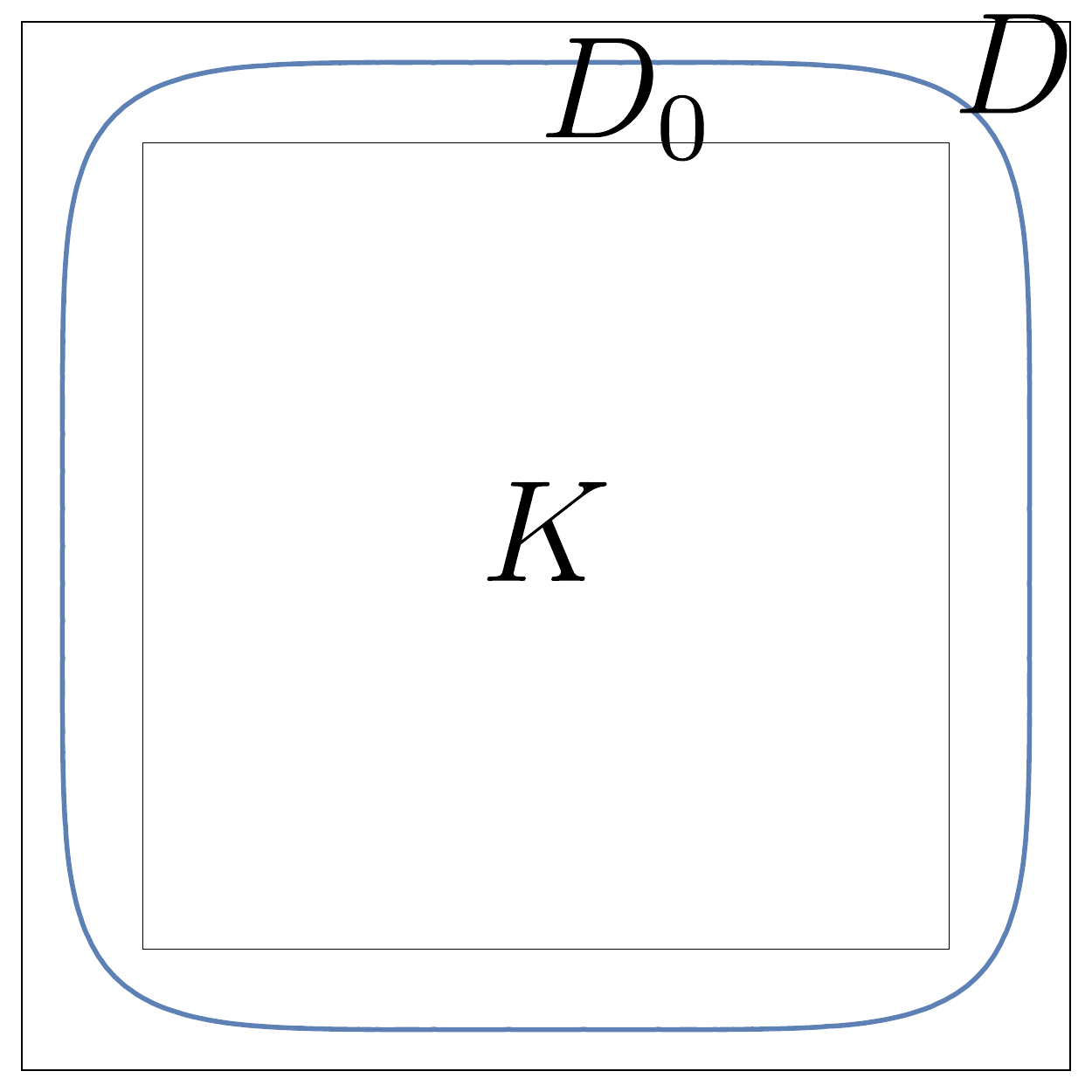}
\end{center}
}

\smallskip
However, surprisingly such distributions $f$ can exist only if $D_0$ is an ellipse.
In fact the following theorem holds, \cite{B1}, \cite{B2}.

\begin{theorem}   \label{jbthm}
Let $D$ be an open, convex,  bounded subset of $\mathbb R^n$    with boundary $\partial D$.
Assume that there exists a distribution $f$, supported in $\overline{D}$, such that the Radon transform of $f$ is  supported in the set of supporting planes to $D$. Then the boundary of $D$ is an ellipsoid.
\end{theorem}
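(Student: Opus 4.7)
The plan is to combine the Helgason--Ludwig range conditions for the Radon transform with the structure theorem for distributions supported on a smooth hypersurface, and deduce from them a functional identity forcing the support function $h_D$ to be the sum of a linear form and the square root of a positive-definite quadratic form, which is exactly the support function of an ellipsoid.

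First I would reduce (by an approximation argument) to the case where $\partial D$ is $C^\infty$ and strictly convex. Since $f$ is a compactly supported distribution of finite order and $\operatorname{supp}\R f$ is contained in the smooth hypersurface $\{p=h_D(\xi)\}\cup\{p=-h_D(-\xi)\}$ of $S^{n-1}\times\mathbb R$, the structure theorem for distributions supported on a submanifold gives
\begin{equation*}
\R f(\xi,p)=\sum_{j=0}^{N}q_j^+(\xi)\,\delta^{(j)}\!\bigl(p-h_D(\xi)\bigr)+\sum_{j=0}^{N}q_j^-(\xi)\,\delta^{(j)}\!\bigl(p+h_D(-\xi)\bigr),
\end{equation*}
with $q_j^\pm$ distributions on $S^{n-1}$. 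Setting $M_k(\xi):=\int\R f(\xi,p)\,p^k\,dp$, a direct computation gives
\begin{equation*}
M_k(\xi)=\sum_{j=0}^{\min(N,k)}\tfrac{k!}{(k-j)!}\bigl[(-1)^{j}q_j^+(\xi)h_D(\xi)^{k-j}+(-1)^{k}q_j^-(\xi)h_D(-\xi)^{k-j}\bigr],
\end{equation*}
and the Helgason--Ludwig range conditions assert that each $M_k$ is the restriction to $S^{n-1}$ of a homogeneous polynomial of degree $k$ on $\mathbb R^n$. Packaging these moments into the generating function $G(\xi,s):=\sum_{k\ge 0}M_k(\xi)s^k/k!$ and summing yields
\begin{equation*}
G(\xi,s)=e^{sh_D(\xi)}P^+(\xi,-s)+e^{-sh_D(-\xi)}P^-(\xi,-s),\qquad P^\pm(\xi,s):=\sum_{j=0}^{N}q_j^\pm(\xi)s^j.
\end{equation*}

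The crux is to exploit the homogeneity: since each $M_k$ extends to a degree-$k$ homogeneous polynomial, $G$ extends to a function on $\mathbb R^n\times\mathbb R$ that is polynomial in $\eta\in\mathbb R^n$ for each fixed $s$, and therefore real-analytic at $\eta=0$. Substituting $\xi=\eta/|\eta|$ and rescaling $s$ by $|\eta|$ converts the identity into
\begin{equation*}
G(\eta,s)=e^{sh_D(\eta)}\tilde P^+(\eta,-s)+e^{-sh_D(-\eta)}\tilde P^-(\eta,-s),
\end{equation*}
with $h_D$ now the $1$-homogeneous extension and $\tilde P^\pm$ polynomial in $s$ with positively homogeneous coefficients in $\eta$. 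A positively homogeneous function of degree $1$ on $\mathbb R^n$ is smooth at the origin only if it is linear, which is excluded for a bounded body with non-empty interior; hence the singularities of $e^{\pm sh_D(\pm\eta)}$ at $\eta=0$ must be matched by those of the coefficient polynomials. Expanding order by order in $s$ and extracting the homogeneous-polynomial requirement on the resulting $\eta$-coefficients, the cancellation turns out to be possible precisely when $h_D(\eta)+h_D(-\eta)=2\sqrt{Q(\eta)}$ for a positive-definite quadratic form $Q$; the convex sublinear nature of $h_D$ then forces the odd part $(h_D(\eta)-h_D(-\eta))/2$ to be a linear form $\langle a,\eta\rangle$, giving $h_D(\xi)=\langle a,\xi\rangle+\sqrt{Q(\xi)}$, which is exactly the support function of an ellipsoid centered at $a$.

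The hard part is the inductive step just sketched. The difficulty lies in the simultaneous presence of the unknown coefficient distributions $q_j^\pm$ and the unknown $h_D$, which one must eliminate by matching coefficients of $s^k$ while keeping track of the polynomial character of $M_k$ on $\mathbb R^n$. A more conceptual alternative is to pass to the Fourier side via the projection--slice theorem: compact support of $f$ makes $\hat f$ entire of exponential type on $\mathbb C^n$ by Paley--Wiener, whereas the representation of $\R f$ above yields $\hat f(\eta)=R^+(\eta)e^{-ih_D(\eta)}+R^-(\eta)e^{ih_D(-\eta)}$ on $\mathbb R^n\setminus\{0\}$ with polynomial-type coefficients; the requirement of a smooth extension of $\hat f$ across the origin, together with the growth control from Paley--Wiener, then imposes exactly the same algebraic constraint on $h_D$ and hence the same ellipsoidal conclusion.
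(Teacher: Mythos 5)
Your generating-function reformulation of the Helgason--Ludwig moment conditions is a genuinely different packaging from the paper's, which instead writes the moments as an infinite linear system in the unknowns $q(\pm\xi)$ and eliminates them by exploiting the geometric-series structure of the $2\times 2$ (resp.\ $2(N{+}1)\times 2(N{+}1)$) matrices $M_k$; the paper's argument then reduces to showing $h_D(\xi)h_D(-\xi)$ is a quadratic polynomial, together with a nondegeneracy lemma and a separate proposition that upgrades this to ``ellipsoid'' using the range conditions for all translates of $D$. The moment computation and the identity $G(\xi,s)=e^{sh_D(\xi)}P^+(\xi,-s)+e^{-sh_D(-\xi)}P^-(\xi,-s)$ are correct, and this is a pleasant way to encode the data. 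However, as written the proposal has three gaps that prevent it from being a proof.

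First, the reduction ``to the case where $\partial D$ is $C^\infty$ and strictly convex'' is not available: the distribution $f$ and the set on which $\R f$ is supported are given once and for all, and there is no way to perturb $D$ to a smooth nearby body while keeping the hypothesis. The paper explicitly allows non-smooth boundaries (hence speaks of supporting planes rather than tangent planes), and the structure theorem you invoke for distributions supported on a hypersurface needs the hypersurface $\{p=h_D(\xi)\}$ to be smooth, so some argument is required here rather than a blanket reduction.

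Second, and more seriously, the crucial claim --- that matching the singularities of $e^{\pm s h_D(\pm\eta)}$ at $\eta=0$ against the analyticity of $G$ ``turns out to be possible precisely when $h_D(\eta)+h_D(-\eta)=2\sqrt{Q(\eta)}$'' --- is asserted with no argument at all. You acknowledge this is ``the hard part,'' and it is: this is exactly where the paper does its real work, via the matrix elimination $S P_k=P_{k+1}$, the determinant identity showing $h_D(\xi)h_D(-\xi)=\det S$ is rational and then polynomial, and Lemma~\ref{q2} to rule out the degenerate case $\det(P_0,P_1)\equiv 0$ (i.e.\ $q(\xi)q(-\xi)\equiv 0$). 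Nothing in the proposal addresses this nondegeneracy issue, and without it the ``elimination'' has no content.

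Third, the final step is false as stated. Knowing only that the even part of $h_D$ is $\sqrt{Q}$, ``the convex sublinear nature of $h_D$'' does \emph{not} force the odd part to be linear: one can add to $\sqrt{Q}$ a small odd positively $1$-homogeneous perturbation $r$ and $\sqrt{Q}+r$ is still a support function while $r$ is not linear. The paper gets linearity of $h_D(\xi)-h_D(-\xi)$ precisely by invoking the range conditions for \emph{translates} $D+a$ of the domain (Proposition~\ref{rho2}), a step your outline omits. Any correct completion of your argument would have to reinstate this translation trick or extract the analogous information from further coefficients of the generating function.
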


    The next surprise was that this theorem gave a new proof of the result from \cite{KMY} which solved a special case of Arnold's problem, as described
    in Section \ref{S:S3}.
In fact, assume that the domain $D$ is polynomially integrable in the sense of Definition 1.2,
 and let
$\chi_D$ be the characteristic function of $D$. The assumption  that $D$ is polynomially integrable means that for every $\xi$ the function $p \mapsto \R  \chi_D(\xi, p)$ is a polynomial function in the interval  consisting of those $p$ for which the line $L(\xi, p) = \{x \in \mathbb R^n;\,  \langle x , \xi \rangle  = p \}$
intersects $D$.  Choose the integer $s$ so large that $2 s$ is greater than the degrees of all those polynomials.
Using repeatedly the formula \eqref{RDh}
with $h = \chi_D$ we see that the Radon transform of the distribution $\Delta^s \chi_D$ must vanish in the open set of lines that intersect $D$, as well as of course in the open set of lines that are disjoint from  $\overline D$. Hence $\R (\Delta^s f)$ is supported on the set of support planes to the boundary $\partial D$ of $D$.  But by Theorem \ref{jbthm} such distributions can exist only if $\partial D$ is an ellipse.

There is no smoothness assumption on the boundary $\partial D$ in Theorem~\ref{jbthm} ; therefore we have used the term supporting plane (to $D$) instead of tangent plane (to $\partial D$).
The assumption that $f$ is supported in $\overline D$ can be weakened to $f$ being compactly supported in
$\mathbb R^n$, because Helgason's support theorem \cite[Corollary 2.8]{He} shows that $f$ must vanish in the complement of the convex compact set $\overline D$, if $f$ is compactly supported and the Radon transform $\mathcal R f(L)$ vanishes for all hyperplanes $L$ that are disjoint from $\overline D$.

\subsection{On the proof of Theorem \ref{jbthm}}
Here we will prove Theorem \ref{jbthm} for the special case when the Radon transform of $f$ is a distribution of order   zero, and we will briefly indicate how the arguments given here   can be modified to cover the general case.

Let us begin by writing down an expression for an arbitrary distribution $g(\xi, p)$ of order zero   on the manifold $\mathcal P_n$ of hyperplanes in $\mathbb R^n$ that is supported on the set of supporting planes to $D$.

Since $L(\xi, p)$ and $L(-\xi, -p)$ are the same hyperplane, the distribution
 $g(\xi, p)$ must be even,  $g(\xi, p)  =  g(-\xi, -p)$.
A hyperplane $L(\xi, p)$ is a supporting plane for $D$ if and only if
\begin{equation*}
 p = h_D(\xi) \quad \textrm{or}  \quad
 p =  \inf\{ \langle x , \xi \rangle ;\, x \in D\} = - \sup\{-  \langle x , \xi \rangle ;\, x \in D\}
    = - h_D(-\xi).
\end{equation*}
An arbitrary distribution $g(\xi, p)$ on $S^{n-1}  \times \mathbb R$ of order zero that is supported on $\partial D$ can  therefore be written
\begin{equation*}
g(\xi, p)
  = q_+(\xi)\delta (p - h_D(\xi)) + q_-(\xi)\delta (p + h_D(-\xi))
\end{equation*}
for some functions or distributions   $q_+(\xi)$ and $q_-(\xi)$.
Since $\delta (t) = \delta (-t)$   we then have
\begin{align*}
g(-\xi, -p) & =  q_+(-\xi)\delta (-p - h_D(-\xi)) + q_-(-\xi)\delta (-p + h_D(\xi))  \\
& =  q_+(-\xi)\delta (p + h_D(-\xi)) + q_-(-\xi)\delta (p - h_D(\xi)) .
\end{align*}
The condition for $g(\xi, p)$ to be even  therefore becomes
\begin{equation*}
q_-(-\xi ) = q_+(\xi) \quad \textrm{for all $\xi$} .
\end{equation*}
Hence it is  sufficient to introduce one density function, say $q_+(\xi) = q(\xi)$, because then $q_-(\xi) = q(-\xi)$ (actually $q(\xi)$ must be a continuous function, \cite[Lemma 2]{B2}).
We conclude that an arbitrary distribution of order zero on the manifold $\mathcal P_n$  that is supported on the set of supporting planes to $D$ can be represented
\begin{equation*}    
g(\xi, p)  = q(\xi)\delta (p - h_D(\xi)) + q(-\xi)\delta (p + h_D(-\xi)) .
\end{equation*}
Observing that $\int_{\mathbb R} \delta (p \pm h_D(\xi)) p^k dp = (\mp h_D(\xi))^k$ we conclude that $q(\xi)$ and
$h_D(\xi)$  must satisfy the infinitely many equations
\begin{align}       \label{system1}
\begin{split}
& q(\xi) + q(-\xi) = p_0(\xi)  \\
& q(\xi) h_D(\xi) - q(-\xi) h_D(-\xi)  = p_1(\xi)   \\
& q(\xi) h_D(\xi)^2 + q(-\xi) h_D(-\xi)^2    = p_2(\xi)  \\
& q(\xi) h_D(\xi)^3 - q(-\xi) h_D(-\xi)^3    = p_3(\xi)  \\
& \phantom{xx} \ldots
\end{split}
\end{align}
for some homogeneous polynomials  $p_k(\xi)$  in $\xi = (\xi_1, \ldots , \xi_n)$ of degree $k$ for each $k$.
We have to prove that those identities imply that the boundary of $D$ is an ellipsoid.

If $D$ is a ball centered at the origin, then it is clear that $h_D(\xi) = c |\xi|$ for all $\xi \in S^{n-1}$ and some   constant $c$, hence $h_D(\xi)^2 = c^2 |\xi|^2$, which is a homogeneous quadratic polynomial.
It follows from the definition of $h_D(\xi)$ that a linear transformation $A$ that transforms $D$ to $\widetilde D = A D = \{A x;\, x \in D \}$ transforms
$h_D(\xi)$ to $h_{\widetilde D}(\xi) = h_D(A^*\xi)$.
Hence $h_D(\xi)^2$ must be a homogeneous quadratic polynomial whenever $D = - D$ and the boundary of $D$ is an ellipsoid.   This argument can obviously be reversed. Hence we conclude that for bounded  convex domains with $D  = - D$ holds
\begin{align}    \label{prop0}
\begin{split}
& \textrm{\raisebox{-1mm}{the boundary of $D$ is an ellipsoid, if and only if}} \\
& \textrm{\raisebox{1mm}{$h_D(\xi)^2$ is a homogeneous quadratic polynomial.}}
\end{split}
\end{align}
These observations make it possible to give a very short proof of the special case of Theorem \ref{jbthm} when  $D$ is symmetric and the distribution $g(\xi, p)$ has order zero.
Indeed, since $h_D(\xi)$ must be even, $h_D(\xi) = h_D(-\xi)$, we get  from the first and third equations in \eqref{system1}
\begin{equation*}
p_2(\xi)  =   \big(q(\xi)  + q(-\xi) \big)h_D(\xi)^2 = p_0(\xi) h_D(\xi)^2 .
\end{equation*}
But $p_0(\xi)  = q(\xi) + q(-\xi)$ must be equal to some constant $c$.   If $c \ne 0$, then  the fact that
$p_2(\xi)$ is a quadratic polynomial proves the assertion. If $c = 0$, then $q(\xi)$ is odd, so
\begin{equation*}
  q(\xi) h_D(\xi) - q(-\xi) h_D(\xi)  = 2 q(\xi) h_D(\xi) = p_1(\xi)
\end{equation*}
must be linear in $\xi$,  and
\begin{equation*}
p_3(\xi)  =   q(\xi) h_D(\xi)^3 - q(-\xi) h_D(-\xi)^3   = 2  q(\xi) h_D(\xi)^3
\end{equation*}
must be a homogeneous polynomial of degree $3$. Combining the last two equations we can write
\begin{equation*}
p_3(\xi) = h_D(\xi)^2 p_1(\xi) .
\end{equation*}
Since $h_D(\xi)$ is bounded, it follows that $p_3(\xi)$ must be divisible (in the polynomial ring)
 by $p_1(\xi) $, hence
$h_D(\xi)^2$ must be a quadratic polynomial, which completes the proof of Theorem \ref{jbthm} in   this case.

The case when the domain $D$ is not necessarily symmetric is somewhat more complicated.
We then have to consider the condition
\begin{equation*}   
h_D(\xi) h_D(-\xi) \quad \textrm{is a  polynomial}
\end{equation*}
instead of the condition that $h_D(\xi)^2$ is a polynomial. Note that  support functions are (positively) homogeneous of degree $1$, so if  the function $h_D(\xi) h_D(-\xi)$ is a polynomial, it must be a homogeneous quadratic polynomial.
Let $D_a = D + a$ with $a \in \mathbb R^n$ be the translated domain, and note that
$h_{D_a}(\xi) = h_D(\xi) +  \langle a , \xi \rangle $.
If $D = - D$ and $h_D(\xi)^2$ is a polynomial, then $h_{D_a}(\xi)^2$ is in general not a polynomial, but
\begin{equation*}
h_{D_a}(\xi)  h_{D_a}(-\xi) = \big(h_D(\xi) +  \langle  \xi,a \rangle \big)  \big(h_D(\xi) -  \langle   \xi,a \rangle \big)
= h_D(\xi)^2 -  \langle  \xi,a \rangle ^2
\end{equation*}
is a homogeneous quadratic polynomial for every $a \in \mathbb R^n$. This observation has the following important converse.

\begin{proposition}    \label{rho2}
Assume that $D$ is a convex, bounded domain for which the product $h_{D_a}(\xi)  h_{D_a}(-\xi)$
is a homogeneous quadratic polynomial for every $a \in \mathbb R^n$. Then the boundary of $D$ is an ellipsoid.
\footnote{Section 6 in \cite{B2} could have been omitted if we had known this fact at the time.}
\end{proposition}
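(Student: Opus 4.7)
The plan is to reduce to the centrally symmetric case and then invoke the equivalence \eqref{prop0}. I would start by expanding the hypothesis: since $h_{D_a}(\xi) = h_D(\xi) + \langle a, \xi\rangle$,
\begin{equation*}
h_{D_a}(\xi)\, h_{D_a}(-\xi) = h_D(\xi)\, h_D(-\xi) + \langle a, \xi\rangle\, L(\xi) - \langle a, \xi\rangle^2,
\end{equation*}
where $L(\xi) := h_D(-\xi) - h_D(\xi)$. By assumption the left-hand side is a homogeneous quadratic polynomial for every $a \in \mathbb R^n$; taking $a = 0$ shows that $h_D(\xi)\, h_D(-\xi)$ is a homogeneous quadratic polynomial, and $\langle a, \xi\rangle^2$ trivially is one. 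Subtracting, I conclude that $\langle a, \xi\rangle\, L(\xi)$ is a homogeneous quadratic polynomial for every $a \in \mathbb R^n$.

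Specializing $a$ to the standard basis vectors $e_1, \ldots, e_n$ yields that $\xi_i L(\xi)$ is a homogeneous polynomial of degree two for each $i$. Since $h_D$ is continuous on $\mathbb R^n \setminus \{0\}$ and positively homogeneous of degree one, so is $L$; in particular $L$ takes finite values on the hyperplane $\xi_i = 0$ away from the origin, so dividing the polynomial $\xi_i L(\xi)$ by $\xi_i$ produces a genuine polynomial, which by homogeneity must be a linear form. Hence $L(\xi) = \langle b, \xi\rangle$ for some $b \in \mathbb R^n$.

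Now I would use the translation identity $h_{D + c}(\xi) = h_D(\xi) + \langle c, \xi\rangle$ to set $D' := D + b/2$; the analogous quantity for $D'$ is
\begin{equation*}
L'(\xi) = h_{D'}(-\xi) - h_{D'}(\xi) = L(\xi) - \langle b, \xi\rangle \equiv 0,
\end{equation*}
so $h_{D'}(\xi) = h_{D'}(-\xi)$, which is equivalent to $D' = -D'$; that is, $D'$ is centrally symmetric about the origin. Applying the hypothesis to $D'$ with $a = 0$ gives that $h_{D'}(\xi)^2$ is a homogeneous quadratic polynomial, whereupon the equivalence \eqref{prop0} produces that $\partial D'$, and hence $\partial D$, is an ellipsoid. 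The only delicate step in the whole argument is the division recovering $L$ as a linear form; this relies on $L$ being a bona fide continuous function on $\mathbb R^n \setminus \{0\}$ so that the polynomial $\xi_i L(\xi)$ really is divisible by $\xi_i$ in the polynomial ring and no spurious poles along the coordinate hyperplanes appear.
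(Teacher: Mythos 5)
Your proof is correct and follows essentially the same route as the paper: expand $h_{D_a}(\xi)h_{D_a}(-\xi)$ to isolate the term $\langle a,\xi\rangle\bigl(h_D(-\xi)-h_D(\xi)\bigr)$ as a homogeneous quadratic polynomial, divide by the linear factor to conclude that $h_D(-\xi)-h_D(\xi)$ is a linear form, translate $D$ so that its support function becomes even, and then invoke \eqref{prop0}. The only cosmetic differences are that you introduce the notation $L(\xi)$ and specialize to all basis vectors $e_i$, whereas the paper fixes a single $a=e_1$; the divisibility and translation steps are identical.
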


\begin{proof}
Take an arbitrary $a \in \mathbb R^n\setminus\{0\}$, for instance $a = (1, 0, \ldots ,0)$. Then $ \langle a , \xi \rangle   = \xi_1$.
The assumption implies that
\begin{align*}
&  h_{D_a}(\xi) h_{D_a}(-\xi) -  h_D(\xi) h_D(-\xi) +  \xi_1^2    \\
= \big(h_D(\xi) & +  \xi_1 \big)  \big(h_D(-\xi) -  \xi_1 \big)
       - h_D(\xi) h_D(-\xi) +  \xi_1^2   \\
& =  \xi_1 \big(h_D(-\xi) -h_D(\xi) \big)
\end{align*}
is equal to a homogeneous quadratic polynomial $p_2(\xi)$.
This implies that the polynomial $p_2(\xi)$ is divisible by the linear factor $ \xi_1$ and hence the quotient must be another linear factor, so
\begin{equation*}
h_D(\xi) - h_D(-\xi) = - 2  \langle b , \xi \rangle
\end{equation*}
for some $b \in \mathbb R^n$.
 But this implies
\begin{align*}
& h_{D_b}(\xi) - h_{D_b}(-\xi)
= h_{D}(\xi) +  \langle b , \xi \rangle  - \big(h_{D}(-\xi)  -   \langle b , \xi \rangle  \big)  \\
& = h_{D}(\xi) - h_{D}(-\xi) + 2  \langle b , \xi \rangle   = - 2  \langle b , \xi \rangle   + 2  \langle b , \xi \rangle = 0  ,
\end{align*}
which shows that $h_{D_b}(\xi)$ is even.
Since $D$, and hence $D_b$, is convex, $D_b$ is uniquely determined by its support function, and it follows that $D_b$ is symmetric with respect to the origin.  By the assumption
$h_{D_b}(\xi) h_{D_b}(-\xi)$ is a homogeneous quadratic polynomial, so
$h_{D_b}(\xi)^2$ has the same property, and by \eqref{prop0} this implies that
the boundary of $D_b$ is an ellipsoid, and hence so is the boundary of $D$.
\end{proof}

We can now complete the proof of Theorem \ref{jbthm} for the case when $g(\xi, p)$ is a distribution of order zero. However, instead of writing the proof as short as possible we have chosen to present the calculations in a way that rather easily can be generalized to the case when $g$ is a distribution of higher order. We have to
show that the equations \eqref{system1} imply that the boundary of $D$ is an ellipsoid.

To shorten formulas we will   write
\begin{equation*}
\quad h_D(\xi) = h, \quad h_D(-\xi) = \check h,  \quad \textrm{and} \quad
q(\xi) = q, \quad q(-\xi) = \check q.
\end{equation*}
The infinite system \eqref{system1} can then be written in matrix form:
\begin{equation}    \label{system2}
\begin{pmatrix}
1  & 1  \\
h  & -  \check h \\
h^2  &  \check h^2 \\
h^3  & -  \check h^3 \\
\ldots & \ldots
\end{pmatrix}
\begin{pmatrix}
q \\
\check q
\end{pmatrix}
=
\begin{pmatrix}
p_0 \\
p_1 \\
p_2 \\
p_3  \\
\ldots
\end{pmatrix}  .
\end{equation}
Denote the sequence of $2 \times 2$ submatrices  of the big matrix to the left by $M_0, M_1, $ etc. Introduce  the column vectors
\begin{equation}   \label{Q}
Q = \begin{pmatrix}
q \\ \check q
 \end{pmatrix} ,
 \qquad
 P_0 =
\begin{pmatrix}
p_0  \\ p_1
\end{pmatrix} , \quad
 P_1 =
\begin{pmatrix}
p_1  \\ p_2
\end{pmatrix} ,  \quad
P_2 =
\begin{pmatrix}
p_2  \\ p_3
\end{pmatrix} ,  \quad
\textrm{etc.}
\end{equation}
Then we have the equations $M_0 Q = P_0$, $M_1 Q = P_1$, etc., and more generally
\begin{equation}   \label{Mkq}
M_k Q = P_k   \quad  \textrm{for all $k \ge 0$} .
\end{equation}
The matrices $M_k$ form a geometric series in the sense that
\begin{equation}      \label{Mk+1}
M_{k+1} = S M_k = M_k T \quad  \textrm{for all $k \ge 0$}
\end{equation}
with
\begin{equation*}
S =
\begin{pmatrix}
0   & 1  \\
h \check h & h -  \check h
\end{pmatrix}
\quad \textrm{and} \quad
T =
\begin{pmatrix}
h   & 0  \\
0 &   -  \check h
\end{pmatrix} .
\end{equation*}
This makes it easy to eliminate $Q$ from the system \eqref{system2}. In fact
\begin{equation}    \label{SP0}
S P_0 = S M_0 Q = M_1 Q = P_1
\end{equation}
and similarly
\begin{equation}   \label{SPk}
SP_k = P_{k+1} \quad \textrm{for all $k \ge 0$} .
\end{equation}
Viewing a row of columns as a matrix we can then form matrix identities  by combining pairs of equations \eqref{SPk} as follows
\begin{equation}    \label{SPP}
S(P_0, P_1) = (P_1, P_2)   ,
\end{equation} and more generally
\begin{equation}  \label{SkPP}
S^k(P_0, P_1) = (P_k, P_{k+1})\quad \textrm{for all $k \ge 0$} .
\end{equation}
Using the product law for determinants in \eqref{SPP} we see that
$h \check h = - \det S$ must be a rational function, provided
\begin{equation}    \label{det}
\det (P_0, P_1) = p_0 p_2 - p_1^2 \ne 0
\end{equation}
as a polynomial.
Assuming \eqref{det} for a moment we prove that $h \check h = - \det S$ must be a polynomial by applying
\eqref{SkPP} for sufficiently large $k$ instead of \eqref{SPP}, which leads to a contradiction unless $h \check h$ is a polynomial.
To prove \eqref{det}  we first note that the following identity of matrices is valid
\begin{equation}    \label{P0P1}
(P_0, P_1) = (M_0 Q, M_1 Q) = (M_0 Q, M_0 T Q) = M_0  ( Q, T Q) .
\end{equation}
The matrix $M_0$ is non-singular for all $\xi$, since
\begin{equation*}
\det M_0 = - (h + \check h) ,
\end{equation*}
$h(\xi)$ is strictly positive for all $\xi$ if the origin is contained in $D$, and the expression
$h(\xi) + h(-\xi)$ is invariant under translations of the coordinate system. Moreover
\begin{equation}    \label{detQTQ}
\det  ( Q, T Q) = \det
\begin{pmatrix}
q   &  h \, q  \\
\check q &   -  \check h \,\check q
\end{pmatrix}
= - q \check q (h + \check h)  .
\end{equation}
It remains to show that $q \check q$ cannot be identically zero.

\begin{lemma}   \label{q2}
Assume that \eqref{system1} holds and that $q(\xi) \ne 0$ for some $\xi$.
Then  $q(\xi)q(-\xi) \ne 0$ for some $\xi$.
\end{lemma}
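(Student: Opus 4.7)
The plan is to argue by contradiction. Suppose that $q(\xi)\,q(-\xi)=0$ for every $\xi\in S^{n-1}$ and derive $q\equiv 0$, contradicting the hypothesis. The key input is only the very first equation in \eqref{system1}, namely $q(\xi)+q(-\xi)=p_0(\xi)$, where $p_0$ is a homogeneous polynomial in $\xi$ of degree zero, so its restriction to $S^{n-1}$ is a constant $c_0$.

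I would split into two cases according to whether $c_0$ vanishes. If $c_0=0$, then $q$ is odd, and the standing assumption becomes $-q(\xi)^2\equiv 0$, giving $q\equiv 0$ at once. If $c_0\neq 0$, then for every $\xi$ at least one of $q(\xi)$, $q(-\xi)$ vanishes, forcing the other to equal $c_0$; thus $q$ takes only the two values $\{0,c_0\}$ on $S^{n-1}$. Since the paragraph preceding the statement of Lemma \ref{q2} records that $q$ is continuous (citing \cite[Lemma 2]{B2}) and $S^{n-1}$ is connected for $n\geq 2$, the function $q$ must be constant. Neither choice is consistent: $q\equiv 0$ contradicts the hypothesis, while $q\equiv c_0$ yields $q(\xi)q(-\xi)=c_0^{2}\neq 0$, contradicting the standing assumption.

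The argument therefore rests on two ingredients: the scalar identity coming from the $k=0$ moment condition, and continuity of $q$ together with connectedness of the sphere. The only mildly delicate step is the case $c_0\neq 0$, where continuity of $q$ is what rules out the ``antipodal partition'' scenario in which the nonzero values of $q$ and $q(-\,\cdot\,)$ would sit on disjoint open halves of $S^{n-1}$; none of the higher moment equations in \eqref{system1} are needed for this lemma.
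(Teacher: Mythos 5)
Your proof is correct, and it takes a genuinely different route from the paper. The paper's argument uses the \emph{first two} equations of \eqref{system1}: it solves them algebraically to obtain $q = (p_0\check h + p_1)/(h+\check h)$ and $\check q = (p_0 h - p_1)/(h+\check h)$, observes that $q\not\equiv 0$ forces $p_0, p_1$ not both zero, normalizes so that $h>0$ (origin in $D$), and then exhibits an explicit $\xi^0$ (a zero of the linear polynomial $p_1$) at which $q\check q = p_0^2 h\check h/(h+\check h)^2 > 0$ when $p_0\ne 0$, falling back on $q\check q = -p_1^2/(h+\check h)^2 \not\equiv 0$ when $p_0 = 0$. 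Your argument needs only the $k=0$ moment condition together with the continuity of $q$ and the connectedness of $S^{n-1}$: from $q(\xi)+q(-\xi)=c_0$ and the supposition $q\check q\equiv 0$, either $c_0=0$ and $q$ is odd with $-q^2\equiv 0$, or $c_0\ne 0$ and $q$ is a continuous $\{0,c_0\}$-valued function on a connected space, hence constant — and both branches collapse to a contradiction. The trade-off: the paper's proof is constructive (it points to a specific direction $\xi^0$) and uses only algebraic facts about $h$ and the polynomials $p_0, p_1$, while yours is an indirect topological argument that is shorter, requires less of the system \eqref{system1}, and avoids the normalization that the origin lie in $D$; it does lean on the continuity of $q$ (available from \cite[Lemma 2]{B2}, as you note) as an essential input rather than a by-product.
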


\begin{proof}
Solving $q$ and $\check q$ from the first two equations in the system \eqref{system2}  gives
\begin{equation}    \label{q}
q =  (p_0 \check h + p_1)/(h + \check h) , \qquad \check q =  (p_0 h - p_1)/(h + \check h)  .
\end{equation}
Since $q$ is assumed not to vanish identically, already the first equation of  \eqref{q} shows that  $p_0$ and $p_1$ cannot both vanish identically. Choosing coordinate system so that the origin is in $D$ we have
$h = h_D(\xi) > 0$ for all $\xi$ and hence  $h \check h > 0$ for all $\xi$.
A translation of the coordinate system does not change $q(\xi)$.
Recall that $p_0$ is constant and that $p_1$ is a homogeneous first degree polynomial with real coefficients.
Choose $\xi^0$ so that $p_1(\xi^0) = 0$. Then
\begin{equation*}
 q \check q =  p_0^2 h \check h/(h + \check h)^2  > 0 \quad \textrm{at $\xi^0$}
\end{equation*}
 if $p_0 \ne 0$. On the other hand, if $p_0 = 0$, then by \eqref{q} we have $q \check q =  - p_1^2/(h + \check h)^2$, which cannot be identically zero.
\end{proof}

By means of Lemma \ref{q2} together with \eqref{P0P1} and \eqref{detQTQ} we can now conclude that
the polynomial $\det (P_0, P_1)$ cannot be identically zero, and hence by \eqref{SPP} that
$\det S = - h \check h$ must be a rational function.
And, as already mentioned,  using \eqref{SkPP} for arbitrarily large $k$ we now obtain a contradiction unless
$h \check h$ is a polynomial.
The same must be true if the domain $D$ is replaced by the translate $D + a$ for arbitrary $a \in \mathbb R^n$, because
\begin{align*}
\int g(\xi, p)  (p +  \langle a , \xi \rangle )^k dp
& = \sum_{j=0}^k \binom kj \int g(\xi, p) p^j dp \, \langle a , \xi \rangle ^{k-j} \\
& = \sum_{j=0}^k \binom kj p_j(\xi)  \langle a , \xi \rangle ^{k-j} ,
\end{align*}
and each term in the last expression is clearly a homogeneous polynomial of degree $k$.
An application  of  Proposition \ref{rho2} now finishes the proof of Theorem \ref{jbthm} for the case when $g(\xi, p)$ is a distribution of order zero.

\smallskip

We now sketch the proof of Theorem \ref{jbthm}   for the case when $g(\xi, p)$ is a distribution of order at most $1$.
An arbitrary even distribution $g(\xi, p)$ on $S^{n-1} \times \mathbb R$ of order at most $1$ that is supported on
\begin{equation*}
\{(\xi, p);\, p = h_D(\xi)\} \cup \{(\xi, p);\, p = - h_D(-\xi)\}
\end{equation*}
can be written $g = g_0 + g_1$, where
\begin{align*}    
\begin{split}
g_0(\xi, p)  & = q_0(\xi)\delta (p - h_D(\xi)) + q_0(-\xi)\delta (p + h_D(-\xi))  \\
g_1(\xi, p)  & = q_1(\xi)\delta '(p - h_D(\xi)) - q_1(-\xi)\delta '(p + h_D(-\xi))  .
\end{split}
\end{align*}
The minus sign between the terms in the expression for $g_1$ is needed to make $g_1(\xi, p)$ even, because $\delta '(\cdot)$ is odd. The matrix form of the system analogous to \eqref{system2} then becomes
\begin{equation}    \label{system3}
\begin{pmatrix}
1  & 0 & 1 & 0 \\
h  & - 1 &  - \check h & -1  \\
h^2  & - 2 h &  \check h^2  &  2 \check h   \\
h^3  & - 3 h^2 &  - \check h^3  & - 3 \check h^2 \\
h^4  & - 4  h^3  &  \check h^4  & 4 \check h^3 \\
\ldots & \ldots  & \ldots & \ldots
\end{pmatrix}
\begin{pmatrix}
q_0 \\
q_1 \\
\check q_0  \\
\check q_1
\end{pmatrix}
=
\begin{pmatrix}
p_0 \\
p_1 \\
p_2 \\
p_3  \\
p_4  \\
\ldots
\end{pmatrix}  .
\end{equation}
Denote the sequence of $4 \times 4$ submatrices of the big matrix in \eqref{system3} by $M_0, M_1,$ etc., and in analogy with \eqref{Q}  introduce the column vectors
\begin{equation*}
Q = (q_0, q_1, \check q_0, \check q_1)^t , \quad \textrm{and} \quad
P_k = (p_k, p_{k+1}, p_{k+2}, p_{k+3})^t , \quad k = 0, 1, \ldots.
\end{equation*}
Then \eqref{Mkq}, \eqref{Mk+1},  \eqref{SP0}, and \eqref{SPk}  are valid with
\begin{equation*}
S =
\begin{pmatrix}
0 & 1 & 0 & 0  \\
0 & 0 & 2 & 0  \\
0 & 0 & 0 & 3 \\
\sigma_4&  - \sigma_3 &\sigma_2 & -\sigma_1
\end{pmatrix} ,
\quad
T =
\begin{pmatrix}
h & 1 & 0 & 0  \\
0 & h & 0 & 0  \\
0 & 0 & -\check h & 1 \\
0 &  0  &0 & -\check h
\end{pmatrix}  ,
\end{equation*}
where
\begin{equation*}
\sigma_1 =  2(h - \check h) , \quad \sigma_2 =  - h^2  - 4 h \check h - \check h^2, \quad
\sigma_3 = -2 h \check h(h - \check h) , \quad \sigma_4 = - h^2 \check h^2 .
\end{equation*}
Note that $\sigma_j$ is up to sign equal to the   elementary symmetric polynomial in four variables of degree $j$, evaluated at
$(h, h, -\check h, -\check h)$.
In this case
\begin{equation*}
\det S = \det T = h^2 \check h^2 = h(\xi)^2 h(-\xi)^2 .
\end{equation*}
We can eliminate $Q$ in the same way as before.
Forming $4 \times 4$ matrix identities analogous to \eqref{SPP} and \eqref{SkPP} we prove in the same way as before that $h^2 \check h^2$ is a rational function and in fact a polynomial, provided the determinant
\begin{equation}    \label{det1}
\det(P_0, P_1, P_2, P_3) \quad \textrm{is different from the $0$-polynomial.}
\end{equation}
It remains only to prove \eqref{det1}.
A calculation \cite[(4.17)]{B2} shows that
\begin{equation*}
\det(P_0, P_1, P_2, P_3)  = \det M_0 \det(Q, TQ, T^2Q, T^3Q) = c \, q_1  \check q_1 (h + \check h)^4
\end{equation*}
with $c \ne 0$, so it is enough to prove the analogue of Lemma \ref{q2} showing that
$q_1  \check q_1 = q_1(\xi) q_1(-\xi)$ cannot be identically zero.
However,  we have not found an easy and elementary proof of this fact for the  case when $g$ is a distribution of higher order. One way is to use the -- admittedly rather awkward  -- Lemma 5.2 from \cite{B2}. A more elegant, but less elementary, way is to use the result from \cite{B3} showing that $q_1(\xi)$ must be real analytic \cite[Theorem 2]{B3}.  This implies that $q_1(\xi) q_1(-\xi)$ cannot be identically zero, since $q_1(\xi)$ is assumed not to be identically zero. This completes the sketch of the proof of Theorem \ref{jbthm} when $  f$ is a distribution of order at most $1$.

\subsection{Singularities of the boundary of the support and singularities of the distribution}
From a different point of view one could say that the results of  \cite{B1} and \cite{B2} inferred information about the regularity of the support of the distribution  $g = \R  f$  from regularity properties of the distribution itself. Indeed, the fact that a hypersurface $\Sigma$ in the manifold $\mathcal P_n$ of hyperplanes in $\mathbb R^n$ is the set of tangent planes to an ellipsoid is an expression of very strong regularity of $\Sigma$.
And the assumption that $\xi \mapsto \int g(\xi, p)p^k dp$ is a polynomial for every $k$ implies a microlocal regularity property of $g$. In fact, already the weaker property that $\xi \mapsto \int g(\xi, p)\phi(p) dp$ is real analytic in a neighborhood of $\xi^0$ for every real analytic $\phi(p)$ is equivalent to
every conormal of the line $\gamma_{\xi^0}: \ p \mapsto (\xi^0,p)$ being absent in the analytic wave front set of $g$, in H\"ormander's notation $W\!F_A(g) \cap N^*(\gamma_{\xi^0}) = \emptyset$. Here $W\!F_A(g)$ denotes the analytic wave front set of $g$, and $N^*(\gamma_{\xi^0})$ denotes the set of conormals in $T^*(\mathcal P_n)$ to the line  $\gamma_{\xi^0}$. Geometrically $\gamma_{\xi^0}$ is the set of all hyperplanes that are orthogonal to $\xi^0$, which is of course a hypersurface in $\mathcal P_n$. Our arguments in \cite{B1} and \cite{B2}, as briefly sketched above,  can be used to prove the following {\it local} statement.   If a distribution $g$ (which need not be a Radon transform) is assumed to be supported on a hypersurface $\Sigma$ in a real analytic manifold, and $\gamma$ is a smooth curve that intersects $\Sigma$ transversally, then $W\!F_A(g) \cap N^*(\gamma) = \emptyset$  implies that the surface $\Sigma$ is real analytic in a neighborhood of the intersection point, and more.
A theorem of this kind was presented in \cite{B3}.   This result is closely related to a key step in H\"ormander's famous proof of Holmgren's uniqueness theorem.




\end{document}